\title[Nonlinearity \&\ illfoundedness in hierarchy of consistency strength]{Nonlinearity and illfoundedness in the hierarchy of large cardinal consistency strength}
\author{Joel David Hamkins}
\address[Joel David Hamkins]
{O'Hara Professor of Philosophy and Mathematics, University of Notre Dame, 100 Malloy Hall, Notre Dame, IN 46556 USA}
\email{jdhamkins@nd.edu}
\urladdr{http://jdh.hamkins.org}
\thanks{Commentary can be made about this article on the author's blog at \href{http://jdh.hamkins.org/nonlinearity-in-the-hierarchy-of-consistency-strength}{http://jdh.hamkins.org/nonlinearity-in-the-hierarchy-of-consistency-strength}.}
\newtheorem{theorem}{Theorem}
\newtheorem*{theorem*}{Theorem}
\newtheorem*{maintheorem*}{Main Theorem}
\newtheorem*{maintheorems*}{Main Theorems}
\newtheorem{corollary}[theorem]{Corollary}
\newtheorem*{corollary*}{Corollary}
\newtheorem*{corollaries*}{Corollaries}
\newtheorem{lemma}[theorem]{Lemma}
\newtheorem{observation}[theorem]{Observation}
\theoremstyle{definition}
\newtheorem{definition}[theorem]{Definition}
\newtheorem*{definition*}{Definition}
\newtheorem{question}[theorem]{Question}
\newtheorem*{question*}{Question}
\newtheorem*{questions*}{Questions}
\newtheorem*{mainquestion*}{Main Question} 
\newtheorem*{openquestion*}{Open Question} 
\theoremstyle{remark}
\newcommand{\QED}{\end{proof}}
\def\proclaim[#1]{{\bf #1}}
\def\BF#1.{{\bf #1.}}
\def\says#1:#2\par{\item[#1] #2\par}
\newcommand{\Dzamonja}{D\v{z}amonja}
\newcommand{\Godel}{G\"odel}
\newcommand{\N}{{\mathbb N}}
\renewcommand{\P}{{\mathbb P}}
\newcommand{\Q}{{\mathbb Q}}
\newcommand{\dotminus}{\mathbin{\text{\@dotminus}}}
\newcommand{\@dotminus}{%
  \ooalign{\hidewidth\raise1ex\hbox{.}\hidewidth\cr$\m@th-$\cr}%
}
\newcommand{\from}{\mathbin{\vbox{\baselineskip=2pt\lineskiplimit=0pt
                         \hbox{.}\hbox{.}\hbox{.}}}}
\newcommand{\of}{\subseteq}
\newcommand{\set}[1]{\{\,{#1}\,\}}
\newcommand{\dom}{\mathop{\rm dom}}
\newcommand{\Con}{\mathop{{\rm Con}}}
\newcommand{\satisfies}{\models}
\newcommand{\proves}{\vdash}
\renewcommand{\setminus}{\raise.3ex\hbox{\rotatebox{-20}{$-$}}} 
\newcommand{\smalllt}{\mathrel{\mathchoice{\raise2pt\hbox{$\scriptstyle<$}}{\raise1pt\hbox{$\scriptstyle<$}}{\raise0pt\hbox{$\scriptscriptstyle<$}}{\scriptscriptstyle<}}}
\newcommand{\smallleq}{\mathrel{\mathchoice{\raise2pt\hbox{$\scriptstyle\leq$}}{\raise1pt\hbox{$\scriptstyle\leq$}}{\raise1pt\hbox{$\scriptscriptstyle\leq$}}{\scriptscriptstyle\leq}}}
   \def\DHLhksqrt#1#2{%
   \setbox0=\hbox{$#1\sqrt{#2\,}$}\dimen0=\ht0
   \advance\dimen0-0.2\ht0
   \setbox2=\hbox{\vrule height\ht0 depth -\dimen0}%
   {\box0\lower0.4pt\box2}}
\def\[#1]{\mathopen{\lbrack\!\lbrack}#1\mathclose{\rbrack\!\rbrack}}
\newbox\gnBoxA
\newbox\gnBoxB
\newdimen\gnCornerHgt
\newdimen\gnArgHgt
\def\gcode #1{%
\setbox\gnBoxA=\hbox{$#1$}%
\setbox\gnBoxB=\hbox{$\bar #1$}%
\gnArgHgt=\ht\gnBoxB%
\ifnum     \gnArgHgt<\gnCornerHgt \gnArgHgt=0pt%
\else \advance \gnArgHgt by -\gnCornerHgt%
\fi \raise\gnArgHgt\hbox{\tiny$\ulcorner$} \box\gnBoxA %
\raise\gnArgHgt\hbox{\tiny$\urcorner$}}
\newcommand{\UnderTilde}[1]{{\setbox1=\hbox{$#1$}\baselineskip=0pt\vtop{\hbox{$#1$}\hbox to\wd1{\hfil$\sim$\hfil}}}{}}
\newcommand{\Undertilde}[1]{{\setbox1=\hbox{$#1$}\baselineskip=0pt\vtop{\hbox{$#1$}\hbox to\wd1{\hfil$\scriptstyle\sim$\hfil}}}{}}
\newcommand{\undertilde}[1]{{\setbox1=\hbox{$#1$}\baselineskip=0pt\vtop{\hbox{$#1$}\hbox to\wd1{\hfil$\scriptscriptstyle\sim$\hfil}}}{}}
\newcommand{\UnderdTilde}[1]{{\setbox1=\hbox{$#1$}\baselineskip=0pt\vtop{\hbox{$#1$}\hbox to\wd1{\hfil$\approx$\hfil}}}{}}
\newcommand{\Underdtilde}[1]{{\setbox1=\hbox{$#1$}\baselineskip=0pt\vtop{\hbox{$#1$}\hbox to\wd1{\hfil\scriptsize$\approx$\hfil}}}{}}
\renewcommand{\iff}{\mathrel{\leftrightarrow}}
\newcommand{\Iff}{\mathrel{\Longleftrightarrow}}
\def\<#1>{\left\langle#1\right\rangle}
\newcommand{\ZFC}{{\rm ZFC}}
\newcommand{\ZF}{{\rm ZF}}
\newcommand{\GBC}{{\rm GBC}}
\newcommand{\CH}{{\rm CH}}
\newcommand{\DC}{{\rm DC}}
\newcommand{\MA}{{\rm MA}}
\newcommand{\PFA}{{\rm PFA}}
\newcommand{\PA}{{\rm PA}}
\newcommand{\PRA}{{\rm PRA}}
\newcommand{\cell}[1]{\boxit{\hbox to 17pt{\strut\hfil$#1$\hfil}}}
\newcommand{\head}[2]{\lower2pt\vbox{\hbox{\strut\footnotesize\it\hskip3pt#2}\boxit{\cell#1}}}
\newcommand{\boxit}[1]{\setbox4=\hbox{\kern2pt#1\kern2pt}\hbox{\vrule\vbox{\hrule\kern2pt\box4\kern2pt\hrule}\vrule}}
\newcommand{\Col}[3]{\hbox{\vbox{\baselineskip=0pt\parskip=0pt\cell#1\cell#2\cell#3}}}
\newcommand{\tapenames}{\raise 5pt\vbox to .7in{\hbox to .8in{\it\hfill input: \strut}\vfill\hbox to
.8in{\it\hfill scratch: \strut}\vfill\hbox to .8in{\it\hfill output: \strut}}}
\newcommand{\Head}[4]{\lower2pt\vbox{\hbox to25pt{\strut\footnotesize\it\hfill#4\hfill}\boxit{\Col#1#2#3}}}
\newcommand{\Dots}{\raise 5pt\vbox to .7in{\hbox{\ $\cdots$\strut}\vfill\hbox{\ $\cdots$\strut}\vfill\hbox{\
$\cdots$\strut}}}
\renewcommand{\UrlFont}{} 
\addcolon\nolinkurl{#1}}\iffieldundef{eprintclass}{}{\UrlFont{\mkbibbrackets{\thefield{eprintclass}}}}}
\addcolon\nolinkurl{#1}\iffieldundef{eprintclass}{}{\UrlFont{\mkbibbrackets{\thefield{eprintclass}}}}}}
\newcommand{\ZFCcautious}{\ZFC^\circ}
\newcommand{\leqCon}{\leq_{\textup{Con}}}
\newcommand{\equivCon}{\equiv_{\textup{Con}}}
\begin{document}

\begin{abstract}
Many set theorists point to the linearity phenomenon in the hierarchy of consistency strength, by which natural theories tend to be linearly ordered and indeed well ordered by consistency strength. Why should it be linear? In this paper I present counterexamples, natural instances of nonlinearity and illfoundedness in the hierarchy of large cardinal consistency strength, as natural or as nearly natural as I can make them. I present diverse cautious enumerations of \ZFC\ and large cardinal set theories, which exhibit incomparability and illfoundedness in consistency strength, and yet, I argue, are natural. I consider the philosophical role played by ``natural'' in the linearity phenomenon, arguing ultimately that we should abandon empty naturality talk and aim instead to make precise the mathematical and logical features we had found desirable.
\end{abstract}

\maketitle

It is a mystery often mentioned in the foundations of mathematics, a fundamental phenomenon to be explained, that our best and strongest mathematical theories seem to be linearly ordered and indeed well-ordered by consistency strength. Given any two of the familiar large cardinal hypotheses, for example, generally one of them will prove the consistency of the other.

Why should it be linear? Why should the large cardinal notions line up like this, when they often arise from completely different mathematical matters? Measurable cardinals arise from set-theoretic issues in measure theory; Ramsey cardinals generalize ideas in graph coloring combinatorics; compact cardinals arise with compactness properties of infinitary logic. Why should these disparate considerations lead to principles that are linearly related by direct implication and consistency strength?

The phenomenon is viewed by many in the philosophy of mathematics as significant in our quest for mathematical truth. In light of \Godel\ incompleteness, after all, we must eternally seek to strengthen even our best and strongest theories. Is the linear hierarchy of consistency strength directing us along the elusive path, the ``one road upward'' as John Steel \cite{Steel2013:Godels-program-stanford-slides} describes it, toward the final, ultimate mathematical truth? That is the tantalizing possibility.

Meanwhile, we do know as a purely formal matter that the hierarchy of consistency strength is not actually well-ordered---it is ill-founded, densely ordered, and nonlinear. The statements usually used to illustrate these features, however, are weird self-referential assertions constructed in the Gödelian manner via the fixed-point lemma---logic-game trickery, often dismissed as unnatural.

Many set theorists claim that amongst the \emph{natural} assertions, however, the consistency strengths remain linearly ordered and indeed well ordered. H. Friedman \cite{Friedman1998:Proof-theoretic-degrees} refers to ``the apparent comparability of naturally occurring logical strengths as one of the great mysteries of [the foundations of mathematics].''
\medskip\goodbreak

Andrés Caicedo says,
\begin{quotation}
It is a remarkable empirical phenomenon that we indeed have comparability for natural theories. We expect this to always be the case, and a significant amount of work in inner model theory is guided by this belief.
\cite{MO59800.Caicedo:Nonlinearity-of-consistency-strength}
\end{quotation}\goodbreak

Stephen G. Simpson writes:
\begin{quote}
It is striking that a great many foundational theories are linearly ordered by $<$. Of course it is possible to construct pairs of artificial theories which are incomparable under $<$. However, this is not the case for the ``natural'' or non-artificial theories which are usually regarded as significant in the foundations of mathematics. The problem of explaining this observed regularity is a challenge for future foundational research. \cite{Simpson2009:Godel-hierarchy-and-reverse-mathematics}
\end{quote}

John Steel writes ``The large cardinal hypotheses [the ones we know] are themselves wellordered by consistency strength,'' and he formulates what he calls the ``vague conjecture''\label{Steel-conjecture} asserting that
\begin{quote}
If $T$ is a natural extension of \ZFC, then there is an extension $H$ axiomatized by large cardinal hypotheses such that $T \equivCon H$. Moreover, $\leqCon$ is a prewellorder of the natural extensions of $\ZFC$. In particular, if $T$ and $U$ are natural extensions of \ZFC, then either $T \leqCon U$ or $U\leqCon T$. \cite{Steel2014:Godels-program}
\end{quote}

Peter Koellner writes
\begin{quote}
Remarkably, it turns out that when one restricts to those theories that ``arise in nature'' the interpretability ordering is quite simple: There are no descending chains and there are no incomparable elements—the interpretability ordering on theories that ``arise in nature'' is a wellordering. \cite{Koellner2011:SEP-independence-large-cardinals}
\end{quote}

Let me refer to this position as the \emph{natural linearity position}, the assertion that all natural assertions of mathematics are linearly ordered by consistency strength. The strong form of the position, asserted by some of those whom I have cited above, asserts that the natural assertions of mathematics are indeed well-ordered by consistency strength. By all accounts, this view appears to be widely held in large cardinal set theory and the philosophy of set theory.

Despite the popularity of this position, I should like in this article to set myself a hard task, to explore the contrary view and directly to challenge the natural linearity position.

\begin{mainquestion*}
Can we find natural instances of nonlinearity and illfoundedness in the hierarchy of consistency strength?
\end{mainquestion*}

It will be a hard task, but I shall try my best.

\section{Formal instances of consistency-strength nonlinearity}\label{Section.Formal-nonlinearity}

Let me begin by setting aside the naturality requirement (for this section only) and reviewing as a purely formal matter that both nonlinearity and ill-foundedness occur in the hierarchy of consistency strength. This will be established with certain self-referential sentences constructed via the \Godel\ fixed-point lemma---precisely the sentences often dismissed as unnatural. The results of this section are well known and have essentially become part of the mathematical logic folklore; I shall mention several references presently.

The consistency-strength order relation at the center of the discussion is defined as follows, where $\Con(T)$ is the assertion ``$T$ is consistent,'' expressed using a given arithmetically expressible definition of the theory $T$.

\begin{definition}\label{Definition.Consistency-hierarchy}\rm
Theory $S$ is below theory $T$ in consistency strength, written $S\leq T$, if the implication $\Con(T)\to\Con(S)$ is provable in a base theory,  fixed for the purpose of this relation. The theory $S$ is strictly below $T$, written $S<T$, if $S\leq T$ but not conversely; and the theories are equiconsistent, written $S\equiv T$, when $S\leq T$ and $T\leq S$, which provides a hierarchy of degrees of consistency strength.
\end{definition}

One commonly sees Peano arithmetic \PA\ as the base theory in arithmetic contexts and \ZFC\ in set theory. Much weaker base theories, however, such as primitive recursive arithmetic \PRA, actually suffice for nearly all of the usual features one seeks in the hierarchy of consistency strength, whether in arithmetic or set theory, including large cardinal set theory. A strong base theory, such as \ZFC, can erase distinctions in the hierarchy that a weaker theory reveals; the iterated consistency tower over \PA, for example, with $\Con(\PA)$, $\Con(\PA+\Con(\PA))$ and so on, is trivialized in \ZFC, which proves outright all those consistency statements. Meanwhile, instances of nonlinearity are more compelling over a stronger base theory than a weak one, since incomparability persists to weaker base theories; and so in this article I shall adopt \PA\ as the default base theory for arithmetic and \ZFC\ as the default base theory for set theory, although very little of my analysis depends on this.

Let me point out a subtle intentional nature of the hierarchy of consistency strength. Namely, the order $S\leq T$ is not defined on theories as sets of sentences, but is rather defined on theories as described by an arithmetic assertion. It is the description of the theory $T$, after all, rather than the set of sentences, that is used when formulating the assertion $\Con(T)$. Feferman \cite{Feferman1960:Arithmetization-of-metamathematics-in-a-general-setting} realized that a given theory $T$ can be described in different ways in such a way that the corresponding consistency assertions $\Con(T)$ are inequivalent and indeed of arbitrary consistency strength, and this phenomenon will recur in parts of the later analysis of this article. A similar phenomenon arises in proof theory with proof-theoretic ordinals, which are often based on the presentation of the ordinal rather than the ordinal itself; see \cite{Walsh2021:On-the-hierarchy-of-natural-theories}.

A convenient sufficient condition for the strict relation $S<T$, for a consistent theory $T$ extending the base theory, occurs when $T\proves\Con(S)$. In this case $\Con(T)$ implies \hbox{$\Con(T+\Con(S))$}, which implies $\Con(S)$, but $\Con(S)$ cannot prove $\Con(T)$ over the base theory, for then the base theory plus $\Con(S)$ would prove its own consistency, contrary to the incompleteness theorem. This method enables one to prove instances of the strict relation $S<T$ without ever having explicitly to prove an instance of nonprovability, that is, $T\nleq S$, since this part is in effect offloaded to the incompleteness theorem. In the large cardinal hierarchy, many instances of the strict relation in consistency strength are often proved in just this way---if there is a Mahlo cardinal $\kappa$, for example, then $V_\kappa$ is a set-sized model of \ZFC\ with a proper class of inaccessible cardinals, showing the consistency of this theory, which is therefore strictly weaker than a Mahlo cardinal.

In the research literature, not everyone is using the same hierarchy of strength; there are several closely related notions. First, of course, there is the naive hierarchy of provable direct implication, but nearly everyone recognizes that this isn't really what we want, and we have numerous natural instances of nonlinearity for this. For example, $\ZFC+\CH$ and $\ZFC+\neg\CH$ are clearly incomparable by provable implication over $\ZFC$, and also $\ZFC$ plus a proper class of inaccessible cardinals is incomparable by direct implication with $\ZFC$ plus a Mahlo cardinal, and there are many further examples like this.

Per Lindstrom \cite{Lindstrom2003:Aspects-of-incompleteness} analyzes the hierarchy of interpretability strength, which is closely related to but not identical with the hierarchy of consistency strength; nevertheless, much of Lindstrom's analysis carries over easily to consistency strength. Koellner \cite{Koellner2011:SEP-independence-large-cardinals} similarly treats the interpretability hierarchy in his discussion of large cardinal strength. It may be that for most purposes including philosophical analysis we should ultimately be using interpretability strength rather than consistency strength. These two hierarchies are different, however, even in the case of \ZFC\ and \Godel-Bernays set theory \GBC, since these two theories are equiconsistent but not mutually interpretable.

Harvey Friedman \cite{Friedman1998:Proof-theoretic-degrees} uses the consistency-strength relation of definition \ref{Definition.Consistency-hierarchy}, but defines this only for sentences over the base theory, rather than theories. John Steel \cite{Steel2014:Godels-program} uses exactly the relation of definition \ref{Definition.Consistency-hierarchy}.

Stephen Simpson \cite{Simpson2009:Godel-hierarchy-and-reverse-mathematics}, however, defines the consistency-strength order as $S<T\Iff T\proves\Con(S)$. This is the  sufficiency property I mentioned earlier, which suffices for the strict order of definition \ref{Definition.Consistency-hierarchy}, but Simpson's order is not actually identical with the strict order of definition \ref{Definition.Consistency-hierarchy}---for example, Simpson's order is not dense amongst extensions of the base theory, since it can have nothing strictly between $\PA$ and $\PA+\Con(\PA)$. To my way of thinking, we should want to analyze consistency strength through a reflexive preorder relation $S\leq T$, which gives rise to the equiconsistency degrees via $S\equiv T\Iff S\leq T\leq S$, and Simpson's approach does not seem to do this. For the rest of this article, therefore, I shall proceed with the notion of consistency strength provided by definition \ref{Definition.Consistency-hierarchy}.\goodbreak

\begin{theorem}\label{Theorem.Nonlinearity}
 There are statements $\sigma$ and $\tau$ in the language of arithmetic with incomparable consistency strengths over \PA. That is, neither $\Con(\PA+\sigma)$ nor $\Con(\PA+\tau)$ provably implies the other over \PA.
\end{theorem}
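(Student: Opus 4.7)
The plan is to apply a simultaneous Gödel-Rosser self-referential construction via the fixed-point lemma. The idea is to produce two interacting arithmetic sentences $\sigma$ and $\tau$ whose defining self-references compare the lengths of $\PA$-proofs of the two implications $A := \Con(\PA+\sigma)\to\Con(\PA+\tau)$ and $B := \Con(\PA+\tau)\to\Con(\PA+\sigma)$, with Rosser's classic tie-breaking asymmetry arranged to block both implications from being $\PA$-provable.

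Concretely, I would apply the simultaneous fixed-point lemma to obtain $\sigma$ and $\tau$ satisfying, provably in $\PA$,
\begin{align*}
\sigma &\leftrightarrow \forall p\,\bigl[\mathrm{Proof}_{\PA}(p,\ulcorner A\urcorner)\to\exists q\le p\,\mathrm{Proof}_{\PA}(q,\ulcorner B\urcorner)\bigr],\\
\tau &\leftrightarrow \forall p\,\bigl[\mathrm{Proof}_{\PA}(p,\ulcorner B\urcorner)\to\exists q<p\,\mathrm{Proof}_{\PA}(q,\ulcorner A\urcorner)\bigr].
\end{align*}
The asymmetric use of $\le$ in $\sigma$ versus strict $<$ in $\tau$ is Rosser's standard device for breaking the otherwise-symmetric diagonal. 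These are bona fide arithmetic sentences, since $\mathrm{Proof}_{\PA}$ is $\Sigma_1$ and the consistency statements are $\Pi_1$ in the Gödel codes of $\sigma$ and $\tau$.

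The bulk of the argument then runs in the classical Rosser style. First, I would show that $\PA+\sigma$ and $\PA+\tau$ are each consistent (if not, the corresponding implication is trivially $\PA$-provable, so non-provability of the implication already subsumes this point). Second, I would argue by contradiction that $\PA\nvdash A$: supposing a least proof code $n$ of $A$ exists, a case analysis on whether $\PA$ proves $B$ and on the least such code $m$ produces an explicit internal refutation by $\Sigma_1$-completeness---if $m>n$ (or $B$ is not provable at all), then $\sigma$ is false at $p=n$ and $\PA\vdash\neg\sigma$, contradicting consistency of $\PA+\sigma$; if $m\le n$, then at $p=m$ the strict $<$ in $\tau$ requires a proof of $A$ of code $<m\le n$, contradicting minimality of $n$, whence $\tau$ is false and $\PA\vdash\neg\tau$, contradicting consistency of $\PA+\tau$. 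Non-provability of $B$ follows symmetrically.

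The main obstacle is the delicate bookkeeping of this Rosser-style argument: keeping the $\le$/$<$ asymmetry correctly threaded through the two interacting self-references so that consistency of both extensions and non-provability of both implications fall out together from a single coherent use of $\Sigma_1$-completeness, without appealing to $\omega$-consistency. This essentially doubles the complexity of the standard Rosser construction, but the key point---that the $\le$/$<$ asymmetry closes the loop---is the same. Everything else is a straightforward application of the fixed-point lemma and the formalized properties of the $\PA$ proof predicate.
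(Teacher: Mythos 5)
Your construction has a genuine gap: the contradictions you invoke at the end of each case are not available. In both branches of your case analysis you conclude ``$\PA\proves\neg\sigma$, contradicting the consistency of $\PA+\sigma$'' (or likewise for $\tau$), but the consistency of $\PA+\sigma$ and $\PA+\tau$ is never independently established---and it cannot be, because it is essentially equivalent to the conclusion you are trying to prove. Concretely, let $n$ be the least code of a $\PA$-proof of $A$ and $m$ the least code of a proof of $B$ (possibly $\infty$). If $n<\infty$ and $m>n$, then $\sigma$ is false, $\PA\proves\neg\sigma$ by $\Sigma_1$-completeness plus a bounded search, hence $\PA+\sigma$ really is inconsistent, hence $\PA\proves\neg\Con(\PA+\sigma)$ and therefore $\PA\proves A$---which is perfectly compatible with the assumption $n<\infty$ that launched the case. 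No contradiction materializes; the scenario ``$A$ provable, $B$ not, $\sigma$ false, $\tau$ true'' is a stable fixed point of your reasoning. The classical Rosser argument works because the derived refutation contradicts the consistency of the \emph{fixed base theory}, which is assumed a priori; your variant needs the consistency of the \emph{extended} theories $\PA+\sigma$ and $\PA+\tau$, which is precisely what gets destroyed in the cases you are trying to exclude. The root cause is that you aimed the self-reference at proofs of the implications $A$ and $B$ rather than at refutations of $\sigma$ and $\tau$ themselves, so the ``punishment'' for a proof of $A$ appearing (namely, $\sigma$ becoming refutable) only confirms $A$ rather than refuting anything.

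The paper's proof avoids this by a different double fixed point: $\sigma$ asserts that every proof of $\neg\tau$ from the fixed theory $\PA+\Con(\PA)$ is preceded by a smaller proof of $\neg\sigma$, and dually for $\tau$. Irrefutability of both sentences then follows from the assumed consistency of $\PA+\Con(\PA)$ (the sentence with the overall smallest refutation would be $\PA$-provably true, contradicting that consistency), and the two separating models are obtained by applying the second incompleteness theorem to $\PA+\Con(\PA)+\sigma$ and to $\PA+\Con(\PA)+\tau$: in a model of the former together with its own inconsistency statement, the nonstandard refutation of $\sigma$ forces $\Con(\PA+\sigma)\wedge\neg\Con(\PA+\tau)$, and symmetrically for the latter. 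If you want to repair your approach, you must either re-target the fixed point in this way or find some a priori consistent anchor theory against which the Rosser asymmetry can actually produce a contradiction.
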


\begin{proof}
Using the double version of the fixed-point lemma, we can find distinct sentences $\sigma$ and $\tau$, each asserting that for any refutation of the other sentence in the theory $\PA+\Con(\PA)$, there is a smaller refutation of itself, one with a smaller \Godel\ code. That is, $\sigma$ asserts that for any proof of $\neg\tau$ in $\PA+\Con(\PA)$, there is a smaller proof of $\neg\sigma$; and similarly vice versa with $\tau$.

Neither of these sentences, I claim, is actually refutable in $\PA+\Con(\PA)$, since if one of them were refutable, then one of them would have the smallest refutation, and this would make that sentence also provably true in \PA, which would contradict the consistency of the theory $\PA+\Con(\PA)$. So neither sentence is actually refutable and hence both are (vacuously) true.

Since $\sigma$ is not refutable, it follows that $\PA+\Con(\PA)+\sigma$ is consistent, and so it is also consistent with the assertion of its own inconsistency $\neg\Con(\PA+\Con(\PA)+\sigma)$. In any model of this combined theory, $\sigma$ is refutable in $\PA+\Con(\PA)$, but since also $\sigma$ is true there, there must not be any smaller refutation of $\tau$. Since this syntactic situation will be provable in $\PA$, it follows in light of what the sentences assert that the model thinks that $\PA$ proves that $\sigma$ is true and $\tau$ is false. So from $\Con(\PA)$ it follows both that $\Con(\PA+\sigma)$ and $\neg\Con(\PA+\tau)$ in this model.

Similarly, since $\tau$ is not refutable, we may consider the theory $\PA+\Con(\PA)+\tau$ analogously, and thereby find a model in which $\Con(\PA+\tau)$ but $\neg\Con(\PA+\sigma)$. So the two sentences have incomparable consistency strength over $\PA$, as claimed.
\end{proof}

In fact, we don't need the double fixed-point method to prove theorem \ref{Theorem.Nonlinearity}, since it is an immediate consequence of the following stronger result, which avoids the double fixed-points, while also achieving the incomparability for a sentence with its negation.

\begin{theorem}\label{Theorem.eta-neg-eta-incomparable}
 There is a statement $\eta$ in the language of arithmetic, such that $\eta$ and $\neg\eta$ have incomparable consistency strengths over \PA.
\end{theorem}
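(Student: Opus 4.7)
The plan is to mimic the proof of Theorem~\ref{Theorem.Nonlinearity}, collapsing the two sentences $\sigma,\tau$ into a single Rosser-style fixed point. With $T=\PA+\Con(\PA)$, I would invoke the diagonal lemma to produce $\eta$ asserting that every $T$-proof of $\eta$ is preempted by a strictly smaller $T$-proof of $\neg\eta$; equivalently, $\neg\eta$ asserts that some $T$-proof of $\eta$ has no strictly smaller $T$-refutation.

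First I would verify, by the standard Rosser argument, that $\eta$ is independent of $T$: if $T\proves\eta$ with least proof $p_0$, then unpacking $\eta$ yields $T\proves\exists q<p_0\ \mathrm{Proof}_T(q,\neg\eta)$, while bounded checking of the finitely many $q<p_0$ together with consistency of $T$ yields $T\proves\forall q<p_0\ \neg\mathrm{Proof}_T(q,\neg\eta)$, a contradiction; the dual argument rules out $T\proves\neg\eta$. Hence $T+\eta$ and $T+\neg\eta$ are both consistent.

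For the first half of incomparability, take $M\models T+\eta+\neg\Con(T+\eta)$, available by the second incompleteness theorem applied to $T+\eta$. In $M$ there is an internal $T$-refutation of $\eta$; let $b$ be the least such code. Because $\eta$ is true in $M$, applying it to $b$ shows there is no $T$-proof of $\eta$ at any code $\leq b$, since any such proof would require a strictly smaller $T$-refutation and contradict minimality. This is purely a bounded fact about the specific number $b$, so $\PA$ in $M$ verifies it directly and deduces $\eta$ itself: for any $p$, either $p\leq b$ and the hypothesis $\mathrm{Proof}_T(p,\eta)$ already fails, or $p>b$ and then $q=b$ witnesses the required earlier $T$-refutation. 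Thus $\PA\proves\eta$ in $M$, so $\neg\Con(\PA+\neg\eta)$ holds there, while $\Con(\PA+\eta)$ follows from $\Con(\PA)$. Therefore $\PA\nvdash\Con(\PA+\eta)\to\Con(\PA+\neg\eta)$.

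The symmetric direction runs in $M'\models T+\neg\eta+\neg\Con(T+\neg\eta)$: the truth of $\neg\eta$ in $M'$ supplies a witness $p_0$ that is a $T$-proof of $\eta$ with no strictly smaller $T$-refutation, and from this bounded data $\PA$ in $M'$ derives $\neg\eta$ outright, yielding $\Con(\PA+\neg\eta)\wedge\neg\Con(\PA+\eta)$. The main subtlety—and really the reason a single fixed point suffices—is exactly the key step of Theorem~\ref{Theorem.Nonlinearity}: the strict inequality in the definition of $\eta$ ensures that, once the least internal witness ($b$ or $p_0$) is named, the universal in $\eta$ (or existential in $\neg\eta$) collapses to a bounded computation that $\PA$ can carry out inside the model, converting the internal $T$-provability supplied by the $\neg\Con(T+\eta)$ axiom (respectively $\neg\Con(T+\neg\eta)$) into honest internal $\PA$-provability of $\eta$ (respectively $\neg\eta$).
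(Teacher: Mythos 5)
Your proof is correct and follows essentially the same route as the paper: take $\eta$ to be the Rosser sentence of $\PA+\Con(\PA)$, then use a model of that theory plus $\eta$ plus its own inconsistency (and a model with $\neg\eta$) to turn the internal least witness into internal $\PA$-provability of $\eta$ (resp.\ $\neg\eta$), yielding the two separating models. The only difference is cosmetic: in the $\neg\eta$ direction the paper needs only a model of $\PA+\Con(\PA)+\neg\eta$, so your extra axiom $\neg\Con(T+\neg\eta)$ there is harmless but redundant, and you spell out the bounded verification step somewhat more explicitly than the paper does.
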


\begin{proof}
Let $\eta$ be the Rosser sentence of the theory $\PA+\Con(\PA)$, that is, the sentence asserting of itself that for any proof of $\eta$ from this theory, there is a smaller proof of $\neg\eta$, smaller in the sense of \Godel\ codes. The usual Rosser argument shows that $\eta$ is neither provable nor refutable in this theory. Namely, if $\eta$ is actually provable, then because of what $\eta$ asserts, we will have proved that $\neg\eta$ has a proof with a smaller \Godel\ code, but since the theory is consistent, none of those can be proofs of $\neg\eta$, and we can prove this; similarly, if $\eta$ were actually refutable, then we will have proved that $\eta$ is provable with a smaller code, which again would contradict the consistency of the theory.

Since $\eta$ is not provable, there is a model of $\PA+\Con(\PA)+\neg\eta$. In light of what $\eta$ asserts, this model thinks that there is a proof of $\eta$ from $\PA+\Con(\PA)$ with no smaller proof of $\neg\eta$. Since the model thinks that \PA\ can prove these concrete syntactic facts, which suffice for the failure of $\eta$, the model thinks that \PA\ proves $\neg\eta$. Since $\Con(\PA)$ holds in this model, it follows that this is a model of $\Con(\PA+\neg\eta)+\neg\Con(\PA+\eta)$.

Conversely, since $\eta$ is not refutable, it follows that $\PA+\Con(\PA)+\eta$ is consistent. By the incompleteness theorem, there is a model $M$ of this theory that also thinks this theory is inconsistent. So $M$ has a proof of $\neg\eta$ from $\PA+\Con(\PA)$, and because $\eta$ is true in the model, there must be such a proof of $\neg\eta$ with no smaller proof of $\eta$. Since \PA\ can prove this concrete syntactic fact, which furthermore suffices to validate $\eta$, it follows that the model thinks that \PA\ proves $\eta$. So from $\Con(\PA)$ in this model, we must also have $\Con(\PA+\eta)+\neg\Con(\PA+\neg\eta)$.

So we have exhibited models showing that neither the consistency statement $\Con(\PA+\eta)$ nor $\Con(\PA+\neg\eta)$ implies the other over \PA, and so these consistency strengths are incomparable.
\end{proof}

The sentences $\eta$ and $\neg\eta$ of theorem \ref{Theorem.eta-neg-eta-incomparable} exhibit what is known as the \emph{double-jump} phenomenon for consistency strength, which occurs when both a sentence and its negation jump up in consistency strength. Precisely because $\eta$ and $\neg\eta$ have incomparable consistency strengths, it follows that neither $\PA+\eta$ nor $\PA+\neg\eta$ can be equiconsistent with \PA\ alone (for then they would be comparable), and so both of them jump. Contrast this situation with the usual Rosser sentence $\rho$ defined with respect to \PA, since this sentence $\rho$ has no jumps---both $\PA+\rho$ and $\PA+\neg\rho$ are equiconsistent with \PA\ itself. Meanwhile, the \Godel\ sentence $\gamma$ has one jump, because $\PA+\gamma$ is equivalent to $\PA+\Con(\PA)$, which has strictly higher consistency strength than \PA, but $\PA+\neg\gamma$ is equivalent to $\PA+\neg\Con(\PA)$, which remains equiconsistent with \PA.

The single and double jump phenomenon is also commonly considered for the hierarchy of interpretative strength, as in \cite{Koellner2011:SEP-independence-large-cardinals}, but I'd like to mention a few differences between these hierarchies. Consistency strength is closely related to interpretative strength, to be sure, because if a theory $T$ proves $\Con(S)$, then $S$ is interpretable in $T$ with strictly lower interpretability strength simply by constructing the Henkin model; so these instances of strong increase in consistency strength are also instances of strict increase in interpretative strength. But the hierarchies are not the same. To see one difference, notice that  \cite{Koellner2011:SEP-independence-large-cardinals} points out that no $\Pi^0_1$ sentence can realize the no-jumping situation for interpretative strength, whereas we have said that the Rosser sentence $\rho$ is no-jumping in consistency strength, and this sentence has complexity $\Pi^0_1$. The difficulty for interpretation is that any model of \PA\ satisfying a $\Sigma^0_1$ statement will think that this statement must be true in all the models it interprets, since the very same existential instance is in effect inserted into the interpreted models. But this problem does not arise with consistency strength, since one can think an existential statement is consistent without yet having a specific concrete instance. With the Rosser sentence $\rho$, for example, the mere consistency of the existential case $\neg\rho$ does not cause a difficulty for the consistency of $\rho$ itself.

One can easily modify the argument of theorem \ref{Theorem.eta-neg-eta-incomparable} to look instead for proofs from $\PA+\Con(S)$, where $S$ is a consistent extension of \PA. The result is a sentence $\eta$ for which $S+\eta$ and $S+\neg\eta$ have incomparable consistency strengths. A slight generalization of this method shows in fact that the hierarchy of consistency strengths is a dense order:

\begin{theorem}
If theory $S$ has strictly weaker consistency strength than theory $T$, both extending \PA, then there is a theory $U$ of strictly intermediate consistency strength, $S<U< T$. Indeed, there are two such theories $U$ and $U'$, both strictly between $S$ and $T$, but with incomparable consistency strength to each other.
\end{theorem}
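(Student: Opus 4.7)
The plan is to adapt the Rosser-style fixed-point arguments of Theorems \ref{Theorem.Nonlinearity} and \ref{Theorem.eta-neg-eta-incomparable} so as to interpolate strictly between $S$ and $T$ in consistency strength. I will first construct a single intermediate theory $U$, and then refine the construction via the double fixed-point lemma to obtain a pair $U, U'$ that are strictly between $S$ and $T$ and incomparable to each other.

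For the single intermediate theory, given $S < T$ strict, I would use the fixed-point lemma to produce a sentence $\varphi$ asserting, roughly, that every $S$-proof of $\neg\varphi$ is preceded by a strictly smaller $T$-proof of $\bot$. Set $U = S + \varphi$. One verifies in \PA\ that $\Con(T) \to \Con(U)$, because an $S$-refutation of $\varphi$ would, by the content of $\varphi$, yield a $T$-inconsistency. The converse implication $\Con(U) \to \Con(T)$ fails over \PA: if it were provable then, combined with a provable implication $\Con(S) \to \Con(U)$ supplied by an auxiliary consistency-statement tweak, it would give $\Con(S) \to \Con(T)$, contradicting the strictness of $S < T$. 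To ensure that $U$ is strictly above $S$, I would either augment $\varphi$ with a clause forcing $S + \varphi$ to reflect on $\Con(S)$, or begin with the sentence $\psi = \neg\Con(T)$ (which is consistent with $S$ precisely because the strictness $S<T$ implies $S \nvdash \Con(T)$) and apply the Rosser construction inside the interval $[S+\psi, T]$.

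For the incomparable pair, I would apply the double fixed-point lemma following the template of Theorem \ref{Theorem.Nonlinearity}. Let $\varphi_0, \varphi_1$ be a simultaneously defined fixed-point pair in which each $\varphi_i$ asserts that any $S$-refutation of $\varphi_i$ is preceded by a smaller $S$-refutation of $\varphi_{1-i}$, with a $T$-inconsistency override as in the previous paragraph. Following the argument of Theorem \ref{Theorem.Nonlinearity}, neither $\varphi_i$ is refutable from the appropriate bounded theory, and the incompleteness theorem then supplies a model witnessing $\Con(S+\varphi_0) \wedge \neg\Con(S+\varphi_1)$, and symmetrically a model witnessing the reverse. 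The relativization techniques from the single-theory construction ensure that both $U = S + \varphi_0$ and $U' = S + \varphi_1$ fall strictly inside the interval $(S, T)$.

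The main obstacle will be verifying that the constructed sentences really do land strictly inside the interval. A naive Rosser-style sentence can leave $U$ equiconsistent with $S$ (placing $U$ on the boundary below) or fail to be bounded above by $T$ (placing $U$ on the wrong side of the upper boundary). The strict hypothesis $S < T$ is precisely what enables the construction, since it guarantees both that $S \nvdash \Con(T)$, so that there is room to add nontrivial consistency content on top of $S$, and that $\Con(T)$ is strong enough to refute the would-be $S$-inconsistencies arising from the fixed-point clause. Keeping this two-sided bookkeeping correct is the real technical heart of the proof.
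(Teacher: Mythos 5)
There is a genuine gap, and it sits exactly where you locate the ``real technical heart'': the upper bound $U\leq T$. For $U=S+\varphi$ with $\varphi$ asserting that every $S$-proof of $\neg\varphi$ has a smaller $T$-proof of $\bot$, your verification of $\Con(T)\to\Con(U)$ is circular: you appeal to ``the content of $\varphi$'' to convert an $S$-refutation of $\varphi$ into a $T$-inconsistency, but that content is available only if $\varphi$ is true, which is precisely what is in doubt when $S$ refutes it. Carried out inside \PA, the argument yields only this: from $\Con(T)$ and $\neg\Con(S+\varphi)$, the least $S$-proof of $\neg\varphi$ has no smaller $T$-proof of $\bot$, hence $\neg\varphi$ holds---no contradiction, so the implication is not obtained. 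Indeed nothing you say rules out that $S$ actually proves $\neg\varphi$ (for the untrimmed fixed point asserting its own $S$-irrefutability, \Lob's theorem shows $S$ \emph{does} refute it), and in that case the standard model itself satisfies $\Con(T)+\neg\Con(S+\varphi)$, so the claimed implication would be outright false, not merely unproved. The same unestablished upper bound is then inherited by your double fixed-point pair, where it is passed over as a ``$T$-inconsistency override'' and ``relativization techniques.'' Your fallback bookkeeping is also internally inconsistent: to rule out provability of $\Con(U)\to\Con(T)$ you posit a provable implication $\Con(S)\to\Con(U)$, but that says $U\leq S$, which together with $S\leq U$ makes $U$ equiconsistent with $S$ and destroys the strictness $S<U$ you need.

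The paper avoids both problems with two moves your proposal lacks. First, it takes $\delta$ to be the Rosser sentence of the theory $\PA+\Con(S)+\neg\Con(T)$ itself, so the two models produced by the Rosser and incompleteness analysis---one satisfying $\Con(S+\neg\delta)+\neg\Con(S+\delta)$, the other $\Con(S+\delta)+\neg\Con(S+\neg\delta)$---automatically also satisfy $\neg\Con(T)$. Second, and crucially, the intermediate theories are not $S+\delta$ and $S+\neg\delta$ but the disjunction theories $U=T\vee(S+\delta)$ and $U'=T\vee(S+\neg\delta)$, consisting of all sentences $\tau\vee(\sigma\wedge\delta)$ (respectively $\tau\vee(\sigma\wedge\neg\delta)$), for which $\Con(U)$ is provably equivalent to $\Con(T)\vee\Con(S+\delta)$. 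The upper bound $U,U'\leq T$ is then trivial, the lower bound $S\leq U,U'$ is immediate, and the two models above witness that neither $\Con(U)$ nor $\Con(U')$ implies the other; incomparability of $U$ and $U'$ then gives strictness at both ends for free, with no separate two-sided bookkeeping and no double fixed point. To salvage your route you would need either an honest proof of the upper bound for your fixed-point sentence (which the naive sentence does not supply) or some device playing the role of this disjunction with $T$.
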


\begin{proof}
Suppose that $S$ is weaker than $T$ in consistency strength, with both theories extending \PA. It follows that $\PA+\Con(S)+\neg\Con(T)$ is consistent. Let $\delta$ be the Rosser sentence of this theory, asserting that for any proof of $\delta$ from this theory, there is a smaller proof of $\neg\delta$. The usual Rosser argument shows that $\delta$ is neither provable nor refutable in this theory.

Because $\delta$ is not provable, we have a model of $\Con(S)+\neg\Con(T)+\neg\delta$. In light of what $\neg\delta$ asserts, this model thinks there is proof of $\delta$ with no smaller proof of $\neg\delta$. Since this concrete situation can be verified by \PA, the model thinks that \PA\ and hence $S$ proves $\neg\delta$, and so this is a model of $\Con(S+\neg\delta)+\neg\Con(S+\delta)+\neg\Con(T)$.

Conversely, because $\delta$ is not refutable, the theory $\PA+\Con(S)+\neg\Con(T)+\delta$ is consistent, and so there is a model of this theory, and we may furthermore assume that the model thinks this theory is inconsistent. So this model will think that $\neg\delta$ is provable from $\PA+\Con(S)+\neg\Con(T)$, and since $\delta$ is true here, it must be that the smallest proof of $\neg\delta$ has no smaller proof of $\delta$. The model thinks that \PA\ can prove these concrete syntactic facts, which suffice to validate $\delta$, and so the model thinks that \PA\ and hence $S$ proves $\delta$. Therefore $\Con(S)$ implies $\Con(S+\delta)$ and $\neg\Con(S+\neg\delta)$ here. So this is a model of $\Con(S+\delta)+\neg\Con(S+\neg\delta)+\neg\Con(T)$.

To prove the theorem, let $U$ be the theory $T\vee(S+\delta)$, meaning the theory with all sentences of the form $\tau\vee(\sigma\wedge\delta)$, for any $\tau\in T$ and $\sigma\in S$. The models of $U$ are precisely the models of $T$ and the models of $S+\delta$, and so $\Con(U)$ is simply the disjunction $\Con(T)\vee\Con(S+\delta)$. Similarly, let $U'$ be the theory $T\vee(S+\neg\delta)$, which has $\Con(U')=\Con(T)\vee\Con(S+\neg\delta)$. It follows easily that $U$ and $U'$ are both at least weakly intermediate in consistency strength, $S\leq U,U'\leq T$.

We complete the proof by showing that $U$ and $U'$ are incomparable in consistency strength, which also implies they are both strictly intermediate between $S$ and $T$. For this, observe that the first model above had $\Con(S+\delta)$ and hence $\Con(U)$, but neither $\Con(S+\neg\delta)$ nor $\Con(T)$ and hence not $\Con(U')$. The second model, in contrast, had $\Con(S+\neg\delta)$ and hence $\Con(U')$, but neither $\Con(S+\delta)$ nor $\Con(T)$ and hence not $\Con(U)$. So neither $\Con(U)$ nor $\Con(U')$ provably implies the other, and so these theories are incomparable in consistency strength.
\end{proof}

Let me conclude this section by presenting H. Friedman's \cite{Friedman1998:Proof-theoretic-degrees} method of establishing density and incomparability in the hierarchy of consistency strength, proving it as a consequence of density in the simpler context of the derivability hierarchy, that is, in the Lindenbaum algebra, which for $\Pi^0_1$ assertions, he proves, is isomorphic to the hierarchy of consistency strength. Recall the \emph{Lindenbaum algebra} over a base theory $T$, the algebra of derivability (or direct implication), for which $\sigma\leq\tau$ when $T\proves \tau\to\sigma$. This is a Boolean algebra modulo the induced equivalence of provable equivalence. The strict order $\sigma<\tau$ holds when $\sigma\leq\tau$ but $\tau\not\leq\sigma$.

\begin{lemma}\label{Lemma.Lindenbaum-algebra-dense}
 The Lindenbaum algebra over any computably enumerable base theory $T$ extending \PA\ is dense. That is, if $\sigma<\tau$ with respect to derivability over $T$, then there is a sentence $\theta$ with $\sigma<\theta<\tau$. If $\sigma$ and $\tau$ are $\Pi^0_1$, then there is a $\Pi^0_1$ such sentence $\theta$.
\end{lemma}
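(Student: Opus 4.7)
The plan is to perform a Boolean-mixing construction: combine $\sigma$ and $\tau$ with a third sentence $\rho$, chosen to be independent over an appropriate auxiliary theory, so that the resulting sentence lands strictly between $\sigma$ and $\tau$ in derivability. The key observation for picking the auxiliary theory is that the hypothesis $\sigma<\tau$ amounts to $T\proves\tau\to\sigma$ together with the consistency of $U:=T+\sigma+\neg\tau$; since $U$ is a c.e.\ theory extending \PA, Rosser's theorem yields a $\Pi^0_1$ sentence $\rho$ that is neither provable nor refutable in $U$.

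I would then set $\theta:=\tau\vee(\sigma\wedge\rho)$ and verify $\sigma<\theta<\tau$. The two positive directions $T\proves\tau\to\theta$ and $T\proves\theta\to\sigma$ are immediate from Boolean logic together with the given $\tau\to\sigma$. For strictness, if $T\proves\sigma\to\theta$, then passing to $U$ (which already proves $\sigma$ and $\neg\tau$) the disjunct $\tau$ is knocked out, yielding $U\proves\rho$; and if $T\proves\theta\to\tau$, then since $\theta\wedge\neg\tau$ collapses to $\sigma\wedge\rho\wedge\neg\tau$, one extracts $U\proves\neg\rho$. Both conclusions contradict the independence of $\rho$ over $U$, so the strict inequalities hold.

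For the $\Pi^0_1$ refinement, observe that if $\sigma,\tau\in\Pi^0_1$ then $\sigma\wedge\rho$ is $\Pi^0_1$ (the Rosser sentence is already $\Pi^0_1$), and a disjunction $\forall x\,\alpha(x)\vee\forall y\,\beta(y)$ of two $\Pi^0_1$ sentences is \PA-provably equivalent to the single $\Pi^0_1$ sentence $\forall x\,\forall y\,(\alpha(x)\vee\beta(y))$ by a classical quantifier-shuffle, so $\theta$ may be taken to be $\Pi^0_1$ up to provable equivalence. I do not anticipate a serious obstacle in this proof: it is essentially one Rosser-sentence construction followed by routine Boolean bookkeeping. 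The only point requiring care is that $\rho$ must be the Rosser sentence formulated over $U$ rather than over $T$, so that exactly the needed two-sided independence $U\not\proves\rho$ and $U\not\proves\neg\rho$ is available in the strictness arguments.
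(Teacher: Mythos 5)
Your proposal is correct and follows essentially the same route as the paper: take a Rosser sentence $\rho$ for the consistent theory $T+\sigma+\neg\tau$ and Boolean-mix it in, your $\theta=\tau\vee(\sigma\wedge\rho)$ being $T$-provably equivalent to the paper's $\sigma\wedge(\tau\vee\rho)$ given $T\proves\tau\to\sigma$. The only cosmetic difference is that you verify strictness syntactically (deriving $U\proves\rho$ or $U\proves\neg\rho$) where the paper exhibits models of $U+\rho$ and $U+\neg\rho$.
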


\begin{proof}
Since $\sigma<\tau$, the theory $T+\sigma+\neg\tau$ is consistent. Since this provides a consistent computably axiomatizable theory of arithmetic, it is incomplete. Let $\rho$ be any statement independent of this theory, such as the Rosser sentence, which has complexity $\Pi^0_1$. Let $\theta=\sigma\wedge(\tau\vee\rho)$. So $\sigma\leq \theta\leq\tau$. But since we have models of $T+\sigma+\neg\tau+\rho$ and $T+\sigma+\neg\tau+\neg\rho$, which are therefore models of $T+\sigma+\neg\tau+\theta$ and $T+\sigma+\neg\tau+\neg\theta$, respectively, it follows that $\sigma<\theta<\tau$, as desired. Note that if $\sigma$ and $\tau$ have complexity $\Pi^0_1$, then so does $\theta$.
\end{proof}\goodbreak

Next, Friedman transfers the consistency strength hierarchy to the Lindenbaum algebra simply by observing that every sufficient $\Pi^0_1$ statement is a consistency statement.

\begin{lemma}\label{Lemma.Every-Pi01-statement-is-consistency-assertion}
 If $\theta$ is any $\Pi^0_1$ sentence with $\Con(\PA)\leq\theta$, then $\theta$ is \PA-provably equivalent to $\Con(\PA+\eta)$ for some $\Pi^0_1$ sentence $\eta$.
\end{lemma}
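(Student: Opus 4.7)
Write $\theta$ in $\Pi^0_1$ normal form as $\forall n\,\psi(n)$ with $\psi$ a $\Delta^0_0$ formula. My plan is to construct via the \Godel\ fixed-point lemma a $\Pi^0_1$ sentence $\eta$ whose self-referential definition couples $\PA$-proofs of $\neg\eta$ with witnesses to $\neg\theta$, designed so that $\Con(\PA+\eta)$ becomes $\PA$-equivalent to $\theta$. A natural candidate is the $\Pi^0_1$ fixed-point satisfying
\[
\PA \proves \eta \iff \forall n\,\bigl[\text{Proof}_\PA(n,\ulcorner\neg\eta\urcorner) \to \exists m \leq n\,\neg\psi(m)\bigr],
\]
expressing that every $\PA$-proof of $\neg\eta$ is preceded by a counter-example to $\theta$. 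Once $\eta$ is substituted in, the right-hand side has a $\Delta^0_0$ matrix under a single universal quantifier, so is $\Pi^0_1$, and the fixed-point lemma supplies $\eta$ of matching complexity.

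For the direction $\Con(\PA+\eta) \to \theta$, I would argue contrapositively inside $\PA$: under $\neg\theta$, provable $\Sigma^0_1$-completeness yields a concrete witness $m_0$ with $\neg\psi(m_0)$, and this, together with the defining equation for $\eta$ unpacked formally, produces a $\PA$-proof of $\neg\eta$, which is just $\neg\Con(\PA+\eta)$. For the reverse direction $\theta \to \Con(\PA+\eta)$, I would work in $\PA + \theta$: under $\theta$ the bounded existential $\exists m \leq n\,\neg\psi(m)$ is uniformly false, so the defining equation collapses to $\eta \iff \Con(\PA+\eta)$, and then the hypothesis $\Con(\PA) \leq \theta$ lets $\PA + \theta$ import $\Sigma^0_1$-reflection for $\PA$, which suffices to conclude $\Con(\PA+\eta)$ via the collapsed equivalence.

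The hard part is the second direction. Taken naively, the simplified equation $\eta \iff \Con(\PA+\eta)$ inside $\PA + \theta$ looks exactly like a \Lob-style setup that would force $\PA + \eta$ to be provably inconsistent and sabotage the whole construction. The hypothesis $\Con(\PA) \leq \theta$ is the essential ingredient that sidesteps this collapse: it furnishes inside $\PA + \theta$ just enough $\Sigma^0_1$-reflection to see that the specific $\eta$ produced by the fixed-point lemma delivers a genuinely consistent extension, which is exactly the content of $\Con(\PA+\eta)$.
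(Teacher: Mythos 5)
Your overall plan---a Rosser-style fixed point coupling proofs of $\neg\eta$ with counterexamples to $\theta$---is the right one, but you have the witness comparison oriented backwards, and this breaks both directions. The paper's $\eta$ asserts that every counterexample to $\theta$ has a \emph{strictly smaller} code of a $\PA$-proof of $\neg\eta$; yours asserts that every $\PA$-proof of $\neg\eta$ has a counterexample to $\theta$ at or below it. With your orientation the direction $\Con(\PA+\eta)\to\theta$ already fails: from $\neg\theta$ and a least witness $m_0$ you do \emph{not} obtain a $\PA$-proof of $\neg\eta$---on the contrary, a small counterexample works against $\neg\eta$, since your $\neg\eta$ demands a proof of $\neg\eta$ with \emph{no} counterexample at or below it. A concrete failure: take $\theta$ to be $0=1$ (a $\Pi^0_1$ sentence with $\Con(\PA)\leq\theta$ vacuously). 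Then $\exists m\leq n\,\neg\psi(m)$ is outright $\PA$-provable, so $\PA\proves\eta$, hence $\Con(\PA+\eta)$ is $\PA$-equivalent to $\Con(\PA)$, which certainly does not imply $\theta$ over $\PA$. So your $\eta$ cannot witness the lemma in general, and the claimed contrapositive argument for this direction has the roles of proofs and counterexamples swapped.

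The other direction is not rescued by the appeal to $\Sigma^0_1$-reflection either. The hypothesis $\Con(\PA)\leq\theta$ supplies only $\Con(\PA)$, which over $\PA$ is equivalent to $\Pi^0_1$-reflection for $\PA$, not $\Sigma^0_1$-reflection ($\PA+\Con(\PA)$ does not prove $\Sigma^0_1$-reflection, since that together with $\Con(\PA)$ yields $\Con(\PA+\Con(\PA))$). Moreover, even granting $\Sigma^0_1$-reflection, your collapsed equation $\eta\iff\Con(\PA+\eta)$ under $\theta$ leaves the bad scenario ``$\PA$ proves $\neg\eta$, and $\neg\eta$ is true'' perfectly coherent, so no contradiction is forthcoming; the L\"obian worry you raise is real and the hypothesis does not dissolve it. What makes bare $\Con(\PA)$ suffice is precisely the paper's orientation: if $\theta$ holds and $\PA$ proved $\neg\eta$ via a proof coded by $p$, then the true $\Delta_0$ facts ``$p$ codes a proof of $\neg\eta$'' and ``$\forall m\leq p\,\psi(m)$'' are $\PA$-provable by formalized $\Sigma^0_1$-completeness, and together they imply $\eta$; so $\PA$ would prove both $\eta$ and $\neg\eta$, contradicting the $\Con(\PA)$ that $\theta$ provides. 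Dually, if $\neg\theta$ has least witness $n_0$ and $\Con(\PA+\eta)$ held, the true $\Delta_0$ fact ``$\neg\psi(n_0)$ and no proof of $\neg\eta$ coded below $n_0$'' is $\PA$-provable and implies $\neg\eta$, a contradiction. Flipping your comparison to this form repairs the argument.
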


\begin{proof}
Using the fixed-point lemma, we can form a sentence $\eta$ that asserts of itself, ``for any counterexample to $\theta$, there is a smaller number coding a proof of $\neg\eta$.'' Assume $\PA+\theta$, and hence also $\Con(\PA)$. If $\Con(\PA+\eta)$ fails, then there is a proof of $\neg\eta$ from $\PA$. Since $\theta$ holds, this proof is coded below any counterexample to $\theta$, and so we can also prove $\eta$, contrary to $\Con(\PA)$. So $\Con(\PA+\eta)\leq\theta$. Conversely, if $\theta$ fails yet $\Con(\PA+\eta)$ holds, then there is a counterexample to $\theta$, but no smaller proof of $\neg\eta$, and from this we can prove $\neg\eta$, contrary to assumption.
\end{proof}

The argument works not just with \PA, but with any sufficient base theory. This lemma has a consequence I find remarkable, namely, that every computably enumerable theory extending the base theory is equiconsistent with an individual sentence, since $\Con(T)$ is a $\Pi^0_1$ sentence and implies $\Con(\PA)$, and so by the lemma $\Con(T)$ is equivalent to $\Con(\PA+\eta)$ for some sentence $\eta$. For example, even though the theory $\ZFC$ is not finitely axiomatizable, nevertheless there is an arithmetic sentence $\zeta$ that is equiconsistent with \ZFC\ over \PA.

Using lemma \ref{Lemma.Every-Pi01-statement-is-consistency-assertion}, Friedman deduces the following consequence, which shows how the fundamental nature of consistency strength must parallel that of direct implication.

\begin{theorem}
  The hierarchy of consistency degrees over \PA\ is isomorphic to the derivability algebra over \PA\ of $\Pi^0_1$ sentences at or above $\Con(\PA)$.
\end{theorem}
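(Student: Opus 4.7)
The plan is to exhibit the isomorphism explicitly as the map $\Phi\colon [T]\mapsto [\Con(T)]$, sending the equiconsistency degree of a computably enumerable theory presentation $T$ extending $\PA$ to the \PA-provable-equivalence class of its consistency statement. The output $\Con(T)$ is by construction $\Pi^0_1$, and because $T\supseteq\PA$ we have $\PA\proves\Con(T)\to\Con(\PA)$, which says exactly that $\Con(\PA)\leq\Con(T)$ in the derivability order; so $\Phi$ takes values in the intended codomain.

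The next step is to verify that $\Phi$ is a well-defined order-isomorphism, and this is essentially a matter of unwinding definitions. By Definition~\ref{Definition.Consistency-hierarchy}, $S\leq T$ in consistency strength is literally the statement $\PA\proves\Con(T)\to\Con(S)$, which is the same as $\Con(S)\leq\Con(T)$ in the Lindenbaum order. Consequently $S\equivCon T$ iff $\Con(S)$ and $\Con(T)$ are provably equivalent over $\PA$, so $\Phi$ is well-defined on degrees, injective on degrees, and order-preserving in both directions.

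The substantive content is surjectivity, and this is exactly Lemma~\ref{Lemma.Every-Pi01-statement-is-consistency-assertion}: given any $\Pi^0_1$ sentence $\theta$ with $\Con(\PA)\leq\theta$, that lemma produces a sentence $\eta$ such that $\theta$ is \PA-provably equivalent to $\Con(\PA+\eta)$, whence $\Phi\bigl([\PA+\eta]\bigr)=[\theta]$. In this sense there is no real obstacle remaining: the fixed-point construction inside Lemma~\ref{Lemma.Every-Pi01-statement-is-consistency-assertion} does all of the work, and the present theorem is essentially a repackaging of that lemma together with Definition~\ref{Definition.Consistency-hierarchy}. The one conceptual point worth flagging is the intentional character of the consistency hierarchy discussed after Definition~\ref{Definition.Consistency-hierarchy}: both sides of $\Phi$ must be understood as presentations (theory presentations on one side, sentences on the other, each modulo \PA-provable equivalence of their associated $\Pi^0_1$ content), not as extensional objects; otherwise neither $\Con(T)$ on the left nor the claim of bijectivity on the right is well-posed.
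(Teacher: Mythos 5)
Your proposal is correct and is exactly the argument the paper intends: the paper leaves the proof implicit, deducing the theorem from Lemma~\ref{Lemma.Every-Pi01-statement-is-consistency-assertion} in just the way you describe, with the map $[T]\mapsto[\Con(T)]$ being order-preserving and injective by unwinding Definition~\ref{Definition.Consistency-hierarchy} against the Lindenbaum order, and surjective by that lemma. Your closing remark on intensionality (that the map must act on presentations, with $\PA\proves\Con(T)\to\Con(\PA)$ requiring the presentation of $T$ to provably contain $\PA$) is a worthwhile point the paper only makes in its earlier discussion of Feferman's observation.
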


In particular, since the Lindenbaum algebra is dense by lemma \ref{Lemma.Lindenbaum-algebra-dense}, it follows that the consistency strength hierarchy also is dense; and since it is known that there are incomparable $\Pi^0_1$ statements (incomparable with respect to provable implication), it similarly follows that there are incomparable consistency strengths.

\section{Natural instances of consistency-strength nonlinearity}\label{Section.Natural-instances}

Nobody likes the examples of nonlinearity and illfoundedness provided in section \ref{Section.Formal-nonlinearity}. Those sentences are viewed as unnatural---weird self-referential logic-game trickery. Therefore, let me now embark in earnest on the hard task I have set out for myself. Everyone seems to think it is impossible, but I shall try my best.
\begin{quote}
\textbf{The hard task.} To provide natural, or at least nearly natural, instances of nonlinear incomparability and ill-foundedness in the hierarchy of consistency strength, particularly in the hierarchy of large-cardinal consistency strength.
\end{quote}

To begin, let us reflect a little on the nature of the assertions we might consider, such as the assertion that there are some finite number of inaccessible cardinals.
\begin{quote}
``There are $n$ inaccessible cardinals.''
\end{quote}
As $n$ increases, these statements increase strictly in consistency strength. But there is a subtlety here, concerning how we describe the number $n$. Must we write out $n$ as $1+1+\cdots+1$? That would be odd to insist upon if $n$ were very large.

Suppose that we had said instead that
\begin{quote}
``The number of inaccessible cardinals is at least the number of prime pairs.''
\end{quote}
This sentence surely makes a large cardinal existence assertion, but since it is an open question exactly how many prime pairs there are or whether there are infinitely many, the precise consistency strength of this assertion is not exactly clear. Even in more concrete cases such as ``there are $2^{100}$ inaccessible cardinals,'' we would ordinarily describe this number in effect by providing a method of computing it---multiply $2$ by itself $100$ times. More generally, we might want to say
\begin{quote}
``There are at least $n$ inaccessible cardinals, where $n$ is the output of this specific computational process.''
\end{quote}
Allowing such statements, however, opens the door wide to nonlinearity in the hierarchy of large cardinal consistency strength.

\begin{theorem}\label{Theorem.Incomparability-f(n)-inacc}
Amongst the large cardinal existence assertions of the form,
    \begin{quote}
      ``There are $n$ many inaccessible cardinals,''
    \end{quote}
    where $n$ is specified as the output of a specific concrete computational process, there are instances of incomparable consistency strength. Indeed, there is a computable function $f$ for which the statements
     \begin{quote}
     ``there are $f(n)$ inaccessible cardinals''
     \end{quote}
    are strongly independent with respect to consistency strength.
\end{theorem}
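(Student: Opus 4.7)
The plan is to transfer the Rosser-style arithmetic incomparability of Theorems~\ref{Theorem.Nonlinearity} and~\ref{Theorem.eta-neg-eta-incomparable} up to the large-cardinal hierarchy by producing a computable function $f$ whose values are determined via a self-referential proof search. Writing $I_k$ for ``there are $k$ inaccessible cardinals,'' the key leverage is the Feferman-style intensional nature of the sentence ``there are $f(n)$ inaccessibles'' as it depends on the specific algorithmic description of $f$; different presentations of the same extensional function can yield sentences of different consistency strength, exactly as different presentations of a theory $T$ yield different $\Con(T)$ sentences.

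First, using the recursion theorem I would construct $f$ via an algorithm that, on input $n$, dovetails searches in $\ZFC+\Con(\ZFC)$ for proofs of candidate implications $\Con(\ZFC+I_{f(m)})\to\Con(\ZFC+I_{f(n)})$ for $m\ne n$, together with their converses. The algorithm is modeled on the double fixed-point argument of Theorem~\ref{Theorem.Nonlinearity}: should it find a proof, the value $f(n)$ (and $f(m)$) is shifted so as to explicitly falsify the offending implication. By the Rosser analysis inherent in that construction, none of the candidate proofs can actually exist, so in the real world $f$ outputs its default values and is total computable, but the self-referential description remains in force in the formula being evaluated.

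Second, I would verify pairwise incomparability model-theoretically. For each $m\ne n$, the theory $\ZFC+\Con(\ZFC)+\neg\bigl(\Con(\ZFC+I_{f(m)})\to\Con(\ZFC+I_{f(n)})\bigr)$ is consistent since the implication is unprovable (as argued above); applying the second incompleteness theorem as in Theorem~\ref{Theorem.eta-neg-eta-incomparable} to a suitable extension yields a model in which the implication genuinely fails, and symmetrically the reverse implication fails in a separate model. To promote pairwise incomparability to strong independence---realizing every finite truth-pattern of the sentences $\Con(\ZFC+I_{f(n)})$---I would replace the double fixed point by a uniform multi-parameter fixed point handling all such patterns simultaneously.

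The main obstacle is reconciling this construction with the extensional collapse afforded by $\Sigma^0_1$-completeness: since $f$ is total computable, the base theory proves $f(n)=k$ for the actual value $k$, and the sentence ``there are $f(n)$ inaccessibles'' thereby appears to reduce to $I_k$, whose strengths are linearly ordered. The construction must therefore be performed in a Feferman-intensional spirit, with the relevant sentences understood as the formulas literally naming the algorithm of $f$, and consistency-strength claims registered at this intensional level, so that the self-referential proof-search content of $f$'s specification is not erased by an extensional simplification. Keeping the intensional and extensional aspects in careful alignment, and in particular verifying that the multi-parameter fixed point genuinely delivers every finite independence pattern, is the delicate technical heart of the proof.
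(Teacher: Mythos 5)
There is a genuine gap, and it sits exactly where you flag it. You want $f$ to be \emph{total}, outputting its ``default values'' in the real world, and you hope to keep the consistency-strength content alive by reading the sentences intensionally, via the formula that names the algorithm. But this cannot work: if the algorithm on input $n$ actually halts with output $k_n$, then that is a true concrete $\Sigma^0_1$ fact, so \PA\ proves ``$f(n)=k_n$,'' hence \ZFC\ proves ``there are $f(n)$ inaccessibles'' $\iff I_{k_n}$, and \PA\ then proves $\Con(\ZFC+\hbox{``there are $f(n)$ inaccessibles''})\iff\Con(\ZFC+I_{k_n})$. The intensional presentation is erased by provable $\Sigma^0_1$-completeness, and the resulting consistency statements are linearly (indeed comparably) ordered, since the $I_k$ are. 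The Feferman-style intensionality you invoke has bite only for descriptions whose defining search does \emph{not} terminate in the standard model, so that its outcome is left unsettled and can vary across nonstandard models; that is precisely why no total $f$ can witness the theorem under this reading. Your algorithm description is also internally unstable on this point: either it waits for an offending proof before committing (then it never halts and $f$ is not total, contradicting your claim), or it outputs defaults after a bounded search (then it halts and the collapse above applies).

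The paper's proof resolves this by using the universal computable function of theorem \ref{Theorem.Universal-computable-function}: a single program $e$ that never halts in the standard model, but for which it is consistent with $T=\ZFC+{}$``infinitely many inaccessibles'' that $e$ exhibits any prescribed finite input/output behavior. The sentence $\sigma_n$ is read as asserting that $f(n)$ is defined and that there are $f(n)$ inaccessibles, and incomparability is obtained model-theoretically: pick a model $M\models T$ with $f(n)<f(m)$ both defined, cut at the $(f(n)+1)$th inaccessible to see the consistency of ``exactly $f(n)$ inaccessibles $+\, f(n)<f(m)$,'' and apply incompleteness inside $M$ to get a model $U$ whose natural numbers end-extend $\N^M$ (so the computations of $e$ agree) satisfying $\Con(\ZFC+\sigma_n)+\neg\Con(\ZFC+\sigma_m)$; finite halting patterns give strong independence. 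Your Rosser-style diagonalization against proofs of implications between the consistency statements could perhaps be reworked along these lines, but only by abandoning totality and letting the proof search itself be the (never actually halting) computation whose hypothetical outcomes the models disagree about; as written, the ``delicate technical heart'' you defer is exactly the step that fails.
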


A list of statements is strongly independent in consistency strength when no nontrivial implication is provable between Boolean combinations of the consistency statements. This is equivalent to asserting that the consistency statements freely generate the free countably infinite Boolean algebra.

The proof of theorem \ref{Theorem.Incomparability-f(n)-inacc} will rely on an elementary variant of the universal algorithm, as follows.

\begin{theorem}\label{Theorem.Universal-computable-function}
 For any consistent theory $T$ extending \ZFC, there is a Turing machine program $e$, which we can write down, such that for any partial function $f\from\N\to\N$, there is a model of the theory, such that if we run the program $e$ on input $n$ inside the model, then for $n\in\dom(f)$ the result is $f(n)$, but if $n\notin\dom(f)$, then the program does not halt, and provably so.
\end{theorem}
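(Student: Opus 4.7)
The plan is to construct $e$ via the fixed-point (self-reference) lemma so that its behavior on any input is driven by a Rosser-style proof search in $T$, and then, for any given partial $f$, to show that $T$ is consistent with a complete description of the desired behavior of $e$.

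First, using the fixed-point lemma, I would define a Turing program $e$ that knows its own Gödel code and, on input $n$, systematically enumerates all proofs from $T$, watching for one of two kinds of conclusions: (a) a sentence of the form ``$e(n)$ halts with output $k$'' for some concrete $k$, or (b) the sentence ``$e(n)\diverges$''. If a proof of type (a) is encountered first, $e$ halts and returns that $k$; if a proof of type (b) is encountered first, $e$ enters an explicit infinite loop. Because the outputs of $e$ in a model $M$ of $T$ are driven entirely by whichever such proofs exist in $M$ (possibly nonstandard), the computation of $e$ is highly sensitive to the ambient model, which is the flexibility we want to exploit.

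Second, given any partial function $f\from\N\to\N$, consider the augmented theory
\[
T_f \;=\; T \;\cup\; \Set{\ulcorner e(n)=f(n)\urcorner : n\in\dom(f)} \;\cup\; \Set{\ulcorner e(n)\diverges\urcorner : n\notin\dom(f)}.
\]
I would establish the consistency of $T_f$ by compactness, showing that each finite fragment is consistent with $T$: a finite fragment only prescribes behavior on finitely many inputs, and the Rosser-style ``first-proof-wins'' design of $e$ ensures that $T$ by itself proves neither ``$e(n)=k$'' nor ``$e(n)\diverges$'' for any concrete $n$ and $k$—exactly as in the classical Rosser argument, any such provability would, via the self-referential definition of $e$, produce a smaller proof of the opposite conclusion and thereby contradict $\Con(T)$. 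Any model $M$ of $T_f$ is then a model of $T$ realizing the desired behavior: for $n\in\dom(f)$, the relevant (possibly nonstandard) proof sits inside $M$ and is encountered by the search, so $e(n)$ halts in $M$ with value $f(n)$; for $n\notin\dom(f)$, $M$ contains a proof that $e(n)\diverges$, which the search encounters, causing $e$ to enter its infinite loop, and this non-halting is witnessed provably inside $M$.

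The main obstacle is the consistency of $T_f$, which requires verifying that the self-referential definition of $e$ really does leave $T$ agnostic about the halting pattern of $e$. The crucial point—analogous to the argument in Theorem~\ref{Theorem.eta-neg-eta-incomparable}—is that if $T$ proved any specific value or specific non-halting for $e$, the self-referential search would allow one to unwind the proof into a contradiction with $\Con(T)$, so every finite specification compatible with $T$ (and in particular any one induced by an arbitrary partial $f$) can be satisfied simultaneously.
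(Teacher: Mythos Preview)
Your construction has the self-reference pointing in the wrong direction. Your program $e$ is self-\emph{confirming}: upon finding a $T$-proof of ``$e(n)=k$'' it outputs $k$, and upon finding a $T$-proof of ``$e(n)\diverges$'' it diverges. The Rosser mechanism you invoke---``any such provability would produce a smaller proof of the opposite conclusion''---simply does not fire here. If $T$ proves ``$e(n)=k$'' and this proof is the first relevant one in the enumeration, then $e(n)$ really does output $k$, and \PA\ verifies this by simulation, which \emph{confirms} rather than contradicts the $T$-proof. No inconsistency arises, so you have not established that $T$ is agnostic about the behavior of $e$. Consequently the consistency of $T_f$ is not proved: nothing you have said rules out $T\vdash e(n)\neq 5$, which would make $T+\text{``}e(n)=5\text{''}$ inconsistent. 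There is also a separate gap in the ``provably so'' clause: adding ``$e(n)\diverges$'' to $T_f$ makes the model \emph{satisfy} divergence, but it does not put a $T$-proof of divergence into the model for the search to find; the search could simply run forever, in which case the non-halting is not witnessed by a finite computation and hence not \PA-provable inside the model.

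The paper's construction avoids all of this by making $e$ self-\emph{refuting}: $e$ searches for a $T$-proof that ``$e$ does \emph{not} coincide with the finite table $L$,'' and upon finding one becomes exactly $L$. Now if $T$ proved such a statement, \PA\ would verify by simulation that $e$ \emph{is} $L$, and $T$ would be inconsistent---so no such proofs exist, and for every finite table $L$ the statement ``$e$ is exactly $L$'' is consistent with $T$, which is precisely what compactness needs. Moreover, once the (nonstandard) proof is found in a model, the global table is fixed, so for inputs outside the table \PA\ proves non-halting by inspecting that single finite computation---this handles ``provably so'' uniformly. To repair your approach you would need to invert the response (e.g., output something other than $k$ upon seeing a proof of ``$e(n)=k$''), but then it becomes awkward to realize an arbitrary prescribed value or prescribed divergence on each input separately; the global-table design is what makes both the independence argument and the provable-divergence clause go through cleanly.
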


\begin{proof}
The program $e$ searches for a proof from $T$ of a statement of the form ``the function computed by $e$ is not precisely the function determined entirely by these specific input/output pairs: $(k_0,n_0),\dots,(k_r,n_r)$.'' If such a proof is found, then the program proceeds to halt on exactly those inputs $k_i$ giving output $n_i$; for inputs not of the form $k_i$ on that list, the program loops endlessly. We use the Kleene recursion theorem in order to know that indeed there such a program $e$ defined by this self-referential recursion.

In the standard model of arithmetic, the program will never halt, for if it ever did halt, then it will have done so because it found such a proof that it would not halt in that way, but then proceeded to halt anyway on exactly those inputs with exactly those outputs. By inspecting the computation, we would be able to prove this is the behavior, and this would show that $T$ is inconsistent. So in the standard model, there are no such proofs to be found.

But precisely because of this, there are no such proofs to be found, and so for any particular desired finite list of input/output behavior $(k_0,n_0),\dots,(k_r,n_r)$, it must be consistent with $T$ that the program $e$ halts on these $k_i$ with output $n_i$, and diverges on all other input. Therefore, for any partial function $f\from\N\to\N$, it is finitely consistent with $T$ that the operation of $e$ is in accordance with $f$. And so the whole theory is consistent. So there is a model $M\satisfies T$ in which the function computed by $e$ is altogether in accordance with $f$ on standard input $n$.
\end{proof}

Kindly notice that if in some model of arithmetic the program $e$ should happen to halt on one input $n$ and not on another input $m$, then this will be provable by \PA\ in the model. The reason is that if the program halts on $n$, then it was because a certain proof was found involving an input/output lookup table, which did not include $m$. And so in that model \PA\ will be able to prove that the program does not halt on $m$.

Theorem \ref{Theorem.Universal-computable-function} is part of a long history, perhaps tracing back to Mostowski~\cite{Mostowski1960:A-generalization-of-the-incompleteness-theorem} and Kripke~\cite{Kripke1962:Flexible-predicates-of-formal-number-theory}. The particular proof above follows the presentation on my blog post \cite{Hamkins.blog2016:Every-function-can-be-computable}, with a key suggestion made by Vadim Kosoy in the comment section there. The theorem is mainly to be viewed, however, as a baby version of the universal algorithm theorem, due to W. Hugh Woodin \cite{Woodin2011:A-potential-subtlety-concerning-the-distinction-between-determinism-and-nondeterminism}; but see my simplified proof in \cite{Hamkins:The-modal-logic-of-arithmetic-potentialism}, and also \cite{Hamkins.blog2017:A-program-that-accepts-exactly-any-desired-set-in-the-right-universe, Hamkins.blog2017:The-universal-algorithm-a-new-simple-proof-of-Woodins-theorem}, \cite{BlanckEnayat2017:Marginalia-on-a-theorem-of-Woodin, Blanck2017:Dissertation:Contributions-to-the-metamathematics-of-arithmetic}. Shavrukov made similar arguments in 2012 private communications under the slogan ``On risks of accruing assets against increasingly better advice,'' and he pointed out connections with \cite{Berarducci1990:The-interpretability-logic-of-PA} and \cite{Japaridze1994:A-simple-proof-of-arithmetical-completeness-for-Pi11-conservativity-logic}. Albert Visser has pointed out a similar affinity with the classical proof-theoretic `exile' argument (for example, see `refugee' in~\cite{ArtemovBeklemishev2004:Provability-logic}). Further set-theoretic analogues of the universal algorithm have been explored in \cite{HamkinsWoodin:The-universal-finite-set, HamkinsWilliams2021:The-universal-finite-sequence}. The main difference between theorem \ref{Theorem.Universal-computable-function} and the full universal algorithm (as in \cite{Woodin2011:A-potential-subtlety-concerning-the-distinction-between-determinism-and-nondeterminism, Hamkins:The-modal-logic-of-arithmetic-potentialism} etc.) is the remarkable extension property of the full version. Namely, the algorithm always produces a finite sequence, but in any model of arithmetic $M$ in which the sequence produced is $s$, then for any finite extension $t$ of that sequence in $M$, there is an end-extension of $M$ to a model $N$ in which the computed sequence is $t$. Since I require only the basic property and not the extension property for the applications of this article, I have proceeded with just the baby version of theorem \ref{Theorem.Universal-computable-function}.

\begin{proof}[Proof of theorem \ref{Theorem.Incomparability-f(n)-inacc}]
Let us now use the universal computable function to establish nonlinearity in the large cardinal hierarchy. Let $f$ be the universal computable function, computed by the algorithm as described in theorem \ref{Theorem.Universal-computable-function} using any consistent theory $T$ extending \ZFC\ plus the claim that there are infinitely many inaccessible cardinals. Let $\sigma_n$ be the statement, ``there are $f(n)$ many inaccessible cardinals.'' We interpret the statement as asserting also that $f(n)$ exists. We want to show that $\Con(\ZFC+\sigma_n)$ does not \ZFC-provably imply $\Con(\ZFC+\sigma_m)$ for any $n\neq m$. For this, we need a model of $\ZFC$ satisfying the first consistency statement, but not the second. By the universality theorem \ref{Theorem.Universal-computable-function}, there is a model $M$ of $T$ in which $f(n)<f(m)$ and both are defined. Since the model $M$ has what it thinks is infinitely many inaccessible cardinals, it also has at least $f(n)+1$ many. By cutting the universe to $V_\kappa$ at the $(f(n)+1)$th inaccessible cardinal, the model thinks that the theory ``$\ZFC+f(n)<f(m)$ and there are exactly $f(n)$ many inaccessible cardinals'' is consistent. Since this theory does not prove its own consistency, there is a model $U$ inside $M$ that thinks $f^M(n)$ many inaccessible cardinals are consistent with \ZFC, but not $f^M(n)+1$ many. That model must have $\N^U$ end-extending $\N^M$, and so the operation of $e$ on $n$ and $m$ will agree with $M$, and so $f^U(n)=f^M(n)<f^U(m)=f^M(m)$. So $U$ is a model of \ZFC\ in which $\Con(\ZFC+\sigma_n)+\neg\Con(\ZFC+\sigma_m)$, as desired.
\end{proof}\goodbreak

The same argument works with Mahlo cardinals, measurable cardinals, and similarly with almost any of the usual large cardinal notions.

Perhaps one is tempted to object to theorem \ref{Theorem.Incomparability-f(n)-inacc} on the grounds that the program computing $f$ is not provably total, and this could cause ambiguity in the meaning of the statement ``there are $f(n)$ inaccessible cardinals'' if the computation of $f(n)$ does not halt. In theorem \ref{Theorem.Incomparability-f(n)-inacc}, I had interpreted the statement as false in that case. Indeed, $f(n)$ is defined only in models of $\neg\Con(T)$, which might be considered rather strange to consider if we are interested in $T$ as an aspirational large cardinal theory.

This objection is easily addressed, however, simply by using slightly different statements. Namely, let us consider the sentences asserting
\begin{quote}
``The number of measurable cardinals is at least the running time of this specific computational process.''
\end{quote}
This sentence is naturally interpreted whether or not the computational process halts; the lower bound on the number of asserted measurable cardinals will be finite if it halts and otherwise infinite. If we use the universal algorithm $e$ as described above on input $n$, then in the natural case for the die-hard large cardinal set theorist, therefore, who consider both $T$ and $\Con(T)$ to be true, the sentences will all assert the existence of infinitely many inaccessible cardinals. And yet, still they will have incomparable consistency strengths.

\begin{theorem}\label{Theorem.Incomparability-running-time}
 Amongst the large cardinal existence assertions of the form,
 \begin{quote}
   ``There are as many measurable cardinals as the running time of this specific computational process,''
 \end{quote}
 there are instances with incomparable consistency strength. Indeed, there is a computable function $f$ for which the statements about the running time of the computation of $f(n)$ are strongly independent with respect to consistency strength.
\end{theorem}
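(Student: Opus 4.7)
The plan is to adapt the proof of Theorem \ref{Theorem.Incomparability-f(n)-inacc}, encoding the large-cardinal assertion via the \emph{running time} of a universal-algorithm program rather than via its output. The virtue of this reformulation is that the assertion ``there are at least $t_n$ measurable cardinals, where $t_n$ is the running time of a fixed program $e$ on input $n$'' is naturally meaningful even when $e$ does not halt on $n$: in that case $t_n=\infty$, and $\sigma_n$ simply demands infinitely many measurable cardinals. So in the intended universe, where $T$ holds and $e$ diverges on every input, each $\sigma_n$ is a consequence of $T$, and the large cardinal set theorist is never forced into peculiar models of $\neg\Con(T)$ to assign meaning to these assertions.

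First I would apply Theorem \ref{Theorem.Universal-computable-function} to the theory $T$ extending $\ZFC$ by a proper class of measurable cardinals, obtaining a universal program $e$. That theorem directly controls only the outputs on any prescribed finite lookup table of inputs; a small modification---padding the algorithm with a counter loop before its final halt, where the counter length is written into the lookup table alongside the desired output---lets us additionally prescribe the running time. So for any desired finite partial specification of positive runtimes at finitely many inputs, some model of $T$ will realize exactly that runtime behavior while $e$ diverges on all unspecified inputs.

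To prove strong independence, I fix distinct inputs $n_1,\dots,n_k$ and any partition $A \sqcup B = \{n_1,\dots,n_k\}$---the former to be ``consistent,'' the latter ``inconsistent''---and I must produce a model of $\ZFC$ witnessing $\bigwedge_{n\in A}\Con(\ZFC+\sigma_n)\wedge\bigwedge_{n\in B}\neg\Con(\ZFC+\sigma_n)$. Choose positive integers $t_n$ for $n\in A\cup B$ with $\max_{n\in A} t_n < \min_{n\in B} t_n$, and by the modified universal algorithm obtain a model $M$ of $T$ in which $e$ halts on each $n_i$ in exactly $t_{n_i}$ steps. Cutting $M$ to $V_\kappa$ at the $(\max_{n\in A} t_n)$-th measurable cardinal yields, inside $M$, a set-sized model of $\ZFC+\bigwedge_{n\in A}\sigma_n$ that has too few measurables to satisfy any $\sigma_n$ for $n\in B$. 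Then, exactly as in the proof of Theorem \ref{Theorem.Incomparability-f(n)-inacc}, apply the incompleteness theorem to find a model $U$, with $\N^U$ end-extending $\N^M$, which believes itself to be such a cut-down universe and hence satisfies $\neg\Con(\ZFC+\sigma_n)$ for every $n\in B$ while retaining $\Con(\ZFC+\sigma_n)$ for every $n\in A$. The runtimes $t_{n_i}$ computed in $U$ agree with those computed in $M$ by $\Sigma^0_1$-absoluteness across end-extensions, so the $\sigma_n$ are interpreted coherently, and $U$ is the required witness.

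The main obstacle I anticipate is the runtime-control modification to the universal algorithm: Theorem \ref{Theorem.Universal-computable-function} as stated controls outputs but not runtimes, so one must either incorporate explicit runtime padding (which seems routine, as indicated) or else carefully account for the natural runtime of the proof-search to show that it already ranges widely enough. A secondary subtlety is arranging that the constructed $U$ genuinely has $\N^U$ end-extending $\N^M$ so that the prescribed runtimes are preserved, but this is standard for Henkin-style models built from consistency assertions true in $M$, and it is exactly the property used tacitly in the proof of Theorem \ref{Theorem.Incomparability-f(n)-inacc}.
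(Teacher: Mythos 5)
There is a genuine gap at the central step of your construction: you ``Choose positive integers $t_n$'' and ask for a model $M$ of $T$ in which $e$ halts on each $n_i$ in \emph{exactly} $t_{n_i}$ steps. This is impossible. The universal program of theorem \ref{Theorem.Universal-computable-function} never halts in the standard model (assuming $\Con(T)$), and halting within a standard number of steps is a $\Delta_0$ fact about standard numbers, absolute to every model of \PA, whose natural numbers end-extend $\N$. So in every model of the consistent theory $T$, the program halts, if at all, only at a nonstandard stage; no choice of actual positive integers $t_n$ can be realized as exact runtimes. Nor does your padding modification rescue exact control: the runtime includes the nonstandard proof-search phase, which cannot be padded \emph{down}. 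At best one could hope to control the \emph{relative order} of the (nonstandard) runtimes by prescribing padding amounts in the lookup table, which would require reworking your choice of $t_n$ and the cut-off point accordingly; as written, the step fails.

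The deeper point is that no runtime control is needed, and this is how the paper's proof avoids your obstacle entirely. It uses only the halting pattern, which theorem \ref{Theorem.Universal-computable-function} already provides: if $e$ halts on $n$ (with whatever nonstandard runtime $t$ the model assigns) and does not halt on $m$, then $\tau_n$ demands only $t$ many measurable cardinals, while $\tau_m$ demands \emph{infinitely} many---and, crucially, the non-halting on $m$ is \emph{provable} inside the model, by the lookup-table structure of the algorithm, so $\tau_m$ provably entails more measurables than any fixed finite number. Cutting $M$ at a measurable above $t$, and then taking (inside $M$) a model $U$ of that cut-down theory together with the assertion of its own inconsistency, with $\N^U$ end-extending $\N^M$, yields $\Con(\ZFC+\tau_n)\wedge\neg\Con(\ZFC+\tau_m)$; for strong independence one makes $e$ halt exactly on the inputs designated ``consistent'' and diverge on the rest. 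Your skeleton---cut-off at a measurable between the small and large demands, the incompleteness trick inside $M$, and $\Sigma^0_1$-absoluteness along the end-extension---is the same as the paper's, but by insisting that every designated input halt with a prescribed runtime you both introduced the impossible step and discarded the non-halting inputs, which are precisely what makes the argument go through with the universal algorithm as it stands.
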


\begin{proof}
We proceed essentially the same as in theorem \ref{Theorem.Incomparability-f(n)-inacc}. Let $e$ be the program for the universal function $f$ of theorem \ref{Theorem.Universal-computable-function} defined relative to the theory $T=\ZFC+$``there are infinitely many measurable cardinals,'' and let $\tau_n$ be the statement ``there are as many measurable cardinals as the running time of $e$ on input $n$.'' If $n\neq m$, then there is a model $M\satisfies T$ in which $e$ halts on $n$, but not on $m$. If $t$ is the running time of $e$ on $n$, then we may cut off $M$ at the $(t+1)$th measurable cardinal to see that the theory ``$\ZFC+e$ halts on $n$ in $t$ steps but fails to halt on input $m$ and there are exactly $t+1$ many measurable cardinals'' is consistent. Since this theory does not prove its own consistency, there is a model $U$ inside $M$ that thinks $t$ many measurable cardinals are consistent with \ZFC, but not $t+1$ many. That model must have its natural numbers $\N^U$ end-extending $\N^M$, and so the operation of $e$ on $n$ will still  halt by time $t$ and $m$ will not. So $U$ is a model of \ZFC\ in which $\Con(\ZFC+\tau_n)+\neg\Con(\ZFC+\tau_m)$, as desired.
\end{proof}

An exactly similar argument will apply with statements of the form
 \begin{quote}
   ``There are as many supercompact cardinals as the sizes of squares in the plane that can be covered by tilings using this specific set of polygonal tiles,''
 \end{quote}
and similarly with many other such kinds of assertions, using any provably $m$-complete decision problem.

\section{Natural arithmetic instances of nonlinearity in consistency strength}\label{Section.Natural-instances-arithmetic}

A careful reader will have recognized that many of the arguments of the previous section have little actually to do with large cardinals, and the essential ideas can be used to exhibit natural instances of ill-foundedness and incomparability in the consistency strengths of simple arithmetic assertions.

\begin{theorem}\label{Theorem.Incomparability-e-halts}
 Amongst the assertions of the form,
 \begin{quote}
   ``This specific computational process halts,''
 \end{quote}
 there are instances of incomparable consistency strength. Indeed, there are instances whose consistency strengths are double-jumping. Furthermore, there is a program $e$ for which the statements ``$e$ halts on input $n$'' are strongly independent with respect to consistency strength.
\end{theorem}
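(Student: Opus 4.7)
The plan is to use the universal algorithm of Theorem~\ref{Theorem.Universal-computable-function}, applied to the theory $T = \PA + \Con(\PA)$, to produce a single program $e$ whose halting statements $\sigma_n$ (each asserting that $e$ halts on input $n$) witness all three claims at once. Each $\sigma_n$ is $\Sigma^0_1$, and in the standard model $e$ halts on no input.

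The core step would be a realization lemma: for any finite $I \subseteq \N$ and any assignment $\epsilon \colon I \to \{0,1\}$ with $A = \epsilon^{-1}(1)$, I would produce a model $M \models T$ in which $\Con(\PA + \sigma_n)$ holds exactly when $n \in A$. To obtain it, apply Theorem~\ref{Theorem.Universal-computable-function} to the partial function defined on $A$, augmented when $A$ is empty by one auxiliary point $c$ outside $I$ so that halting is forced in $M$. The remark following Theorem~\ref{Theorem.Universal-computable-function} then delivers the crucial internal provability: once $e$ halts anywhere in $M$, the triggering proof specifies $e$'s complete finite input-output table, so PA inside $M$ proves both the realized halting values and the failures to halt elsewhere. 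Hence $M$ thinks PA proves $\sigma_n$ for $n \in A$ and $\neg\sigma_n$ for $n \in I \setminus A$; combining with $M \models \Con(\PA)$ yields exactly the desired Boolean pattern of $\Con(\PA + \sigma_n)$.

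Strong independence, and in particular pairwise incomparability of the consistency strengths of the $\sigma_n$, is an immediate consequence of the realization lemma. For the double-jump claim, fix any $n$. Applying the lemma with $A = \{n\}$ produces a model of $\PA + \Con(\PA) + \sigma_n$, and by $\Sigma^0_1$-completeness this model also satisfies $\neg \Con(\PA + \neg \sigma_n)$; applying the lemma instead with $A = \emptyset$ and auxiliary point $c \neq n$ produces a model of $\PA + \Con(\PA) + \neg \Con(\PA + \sigma_n)$. Thus both $\Con(\PA + \sigma_n)$ and $\Con(\PA + \neg \sigma_n)$ strictly exceed $\Con(\PA)$, which is the double-jump phenomenon.

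The main obstacle, and the point requiring most care, is justifying the internal provability claim inside a nonstandard $M$: that when $e$ halts, this fact and the corresponding failures of halting on all other inputs are not merely true in $M$ but PA-provable from the viewpoint of $M$. This is precisely the architecture of the universal algorithm: halting is never silent but is always triggered by finding an explicit arithmetic proof whose payload fixes $e$'s entire finite behavior. Once that is in hand, the whole theorem reduces to feeding the appropriate partial function into Theorem~\ref{Theorem.Universal-computable-function}.
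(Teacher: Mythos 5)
Your proposal is correct and is essentially the paper's own direct argument: you run the universal algorithm of theorem \ref{Theorem.Universal-computable-function} relative to $\PA+\Con(\PA)$ and use the key fact that any halting of $e$ is triggered by a proof fixing its entire finite input/output table, so that \PA\ inside the model proves both the realized haltings and the non-haltings elsewhere, which lets you realize every finite Boolean pattern of the statements $\Con(\PA+\sigma_n)$ (your auxiliary halting point for the empty pattern is a worthwhile detail the paper leaves implicit). The only divergence is the double-jump claim, which the paper obtains immediately from theorem \ref{Theorem.eta-neg-eta-incomparable} (since $\neg\eta$ is $\Sigma^0_1$ and hence equivalent to a halting assertion), whereas you derive it directly from the same two universal-algorithm models---a self-contained variant of the same idea, matching the double-jump argument the paper later gives in theorem \ref{Theorem.Incomparable-m-complete}.
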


\begin{proof}
Since every $\Sigma^0_1$ statement is provably equivalent to the halting of a certain computational process, the existence of double-jumping statements of this form is an immediate consequence of theorem \ref{Theorem.eta-neg-eta-incomparable}, which provided a $\Pi^0_1$ sentence $\eta$, so $\neg\eta$ is $\Sigma^0_1$.

But let me also argue directly, using the universal computable function $f$ described in theorem \ref{Theorem.Universal-computable-function}. The universal property of $f$ implies that it is consistent with any consistent strong theory $T$ that $e$ halts exactly on any desired finite set of numbers $n$ and only on those numbers. Consider any $n\neq m$. There is a model of $T$ in which $e$ halts on $n$, but not on $m$. In this case, the model thinks that it is consistent with \PA\ that $e$ halts on $n$, but inconsistent that it does so on $m$, since the reason that it halted on $n$ was because of a specific list of numbers, which did not include $m$, and in the model, \PA\ can prove that that list is what arises in the computation of $f$. So we have a model of $\PA+\Con(\PA+e\text{ halts on }n)+\neg\Con(\PA+e\text{ halts on }m)$. And similarly in the model where $e$ halts only on $m$. So these two statements have incomparable consistency strengths.

We can argue similarly that these statements are strongly independent. The consistency assertions for any nontrivial Boolean combination of the statements is determined by the halting pattern of $e$ on the finite set of numbers $m$ mentioned in the expression. And for any $n$ that is not mentioned, we can make models where the original pattern of halting is the same, except that $e$ also halts on $n$, or does not halt on $n$, respectively. So both the consistency of this statement or its negation can be added to the expression consistently, as desired.
\end{proof}

I should like specifically to call attention to the fact that the assertions used in theorem \ref{Theorem.Incomparability-e-halts} express instances of halting, rather than non-halting. It is an easy matter to show that every consistency assertion $\Con(T)$ for a computably enumerable theory $T$ is equivalent to a statement asserting that a certain program does \emph{not} halt on a certain input---we need only consider the algorithm that searches for a proof of a contradiction in $T$ and halts when such a proof is found. So the assertion that this program does not halt is provably equivalent to $\Con(T)$, and therefore has consistency strength strictly exceeding $T$.

A similar observation reveals the tight intermingling of the two meanings of `undecidable,' namely, the undecidability of a decision problem versus the undecidability of a sentence in a theory.

\begin{theorem}\label{Theorem.c.e.nondecidable-set}
Suppose that $A$ is a computably enumerable nondecidable decision problem.
 \begin{enumerate}
   \item For any consistent theory extending \PA, there are true instances of $n\notin A$ that are not provable in that theory.
   \item Furthermore, the true assertions of $n\notin A$ are not bounded in consistency strength by any consistent theory, including any consistent large cardinal hypothesis.
 \end{enumerate}
\end{theorem}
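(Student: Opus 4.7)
The plan is to reduce both parts to the basic recursion-theoretic principle that a computably enumerable set with c.e.\ complement is decidable, leveraging $\Sigma^0_1$-completeness of PA. Because $A$ is c.e., the predicate ``$n \in A$'' is $\Sigma^0_1$, so $n \in A$ implies $\PA \vdash n \in A$, and therefore any consistent extension of PA fails to prove $n \notin A$ whenever $n \in A$. This gives one containment for free in each part; the contradiction hypothesis supplies the reverse containment, collapsing $A$ to a decidable set.

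For part~(1), I would fix a consistent $T$ extending PA and consider
\[
 B = \{\, n : T \vdash n \notin A \,\},
\]
which is c.e.\ because $T$ is c.e. The observation above gives $B \subseteq \N \setminus A$ by consistency of $T$. If every true instance of $n \notin A$ were provable in $T$, then $B$ would equal $\N \setminus A$, so $\N \setminus A$ would be c.e.\ alongside $A$, making $A$ decidable---contrary to hypothesis.

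For part~(2), I would read ``$S$ bounds the consistency strength of $n \notin A$'' according to Definition~\ref{Definition.Consistency-hierarchy}, namely as $\PA + \Con(S) \vdash \Con(\PA + (n \notin A))$. Fixing a consistent $S$, set
\[
 C = \{\, n : \PA + \Con(S) \vdash \Con(\PA + (n \notin A)) \,\},
\]
which is c.e. The essential step is to verify $C \subseteq \N \setminus A$. If $n \in A$ then $\PA \vdash n \in A$, so $\PA + (n \notin A)$ is inconsistent, and hence $\neg\Con(\PA + (n \notin A))$ is a true $\Sigma^0_1$ sentence, provable already in PA. Because $S$ is externally consistent, $\Con(S)$ holds in $\N$, so $\PA + \Con(S)$ is itself consistent and cannot also prove $\Con(\PA + (n \notin A))$. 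Thus $n \in A$ forces $n \notin C$. If every true $n \notin A$ had $n \in C$, then $C = \N \setminus A$, and once more $A$ becomes decidable. The argument applies verbatim to any consistent large cardinal hypothesis, since only consistency of $S$ and its extending PA are used.

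The main delicate point is the quiet but essential appeal to consistency of $\PA + \Con(S)$---a true-in-$\N$ fact about $S$---to separate the case $n \in A$ in part~(2), together with the careful alignment of ``bounded in consistency strength'' with Definition~\ref{Definition.Consistency-hierarchy}. Beyond this bookkeeping, no genuine obstacle arises; the proof is a direct application of the complementary-c.e.-sets principle, exploiting $\Sigma^0_1$-completeness of PA to translate membership in $A$ into a provable fact that blocks the candidate witnesses.
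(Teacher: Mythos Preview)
Your proof is correct and rests on the same core idea as the paper's: the complementary-c.e.\ principle combined with $\Sigma^0_1$-completeness of \PA. Part~(1) is essentially identical to the paper's argument, just phrased in terms of the set $B$ rather than the paper's ``by day/by night'' decision procedure.

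For part~(2) there is a minor but noticeable difference in packaging. The paper reduces (2) to (1): it applies part~(1) to the theory $\PA+\Con(T)$ to obtain some true $n\notin A$ not provable there, and then argues model-theoretically that any model of $\PA+\Con(T)+(n\in A)$ witnesses $\Con(T)\wedge\neg\Con(\PA+(n\notin A))$. You instead rerun the c.e.-complement argument directly on the set $C$ of $n$ whose consistency strength is bounded by $S$, avoiding models entirely. Both routes are short; the paper's is more modular (showing explicitly that (2) follows from (1)), while yours is more self-contained and syntactic. Neither buys anything the other does not.
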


\begin{proof}
Suppose $A$ is a computably enumerable nondecidable decision problem, and consider any consistent extension $S$ of \PA. If all true instances of $n\notin A$ were provable in $S$, then we would have a decision procedure for $A$. Namely, by day we run the enumeration algorithm of $A$, learning more of the positive instances; by night, we search for proofs in $S$ that $n\notin A$, for the negative instances. Since $S$ is consistent and \PA\ proves any true instance of halting, it follows that $S$ can be trusted when it proves $n\notin A$. Since $A$ is undecidable, this process must fail as a decision procedure, and so there are $n\notin A$ for which this is not provable in the theory $S$.

Consider now any consistent computably enumerable theory $T$, such as any consistent large cardinal theory. By applying the previous observation with the theory $\PA+\Con(T)$, we see that there are instances $n\notin A$ for which this is not provable in $\PA+\Con(T)$. So there is a model of \PA\ in which $\Con(T)+(n\in A)$. Since $n\in A$ implies that \PA\ proves it, the model thinks that $\PA+(n\notin A)$ is inconsistent. So this is a model of \PA\ in which $T$ is consistent, but $\PA+(n\notin A)$ is not, showing that the consistency strength of the assertion $n\notin A$ is not bounded by $T$, as claimed.
\end{proof}

Thus, every computably enumerable computably undecidable set is saturated with logical undecidability and nontrivial consistency strength. By (2), the true assertions of $n\notin A$ can have no largest instance of consistency strength. From this, it follows that either there is incomparability in consistent strength amongst these statements, or else the statements form a linear hierarchy of consistency strength, not bounded above by any given consistency strength.

Recall that a computably enumerable set $A\of\N$ is said to be \emph{$m$-complete}, if for every computably enumerable set $B$ there is a computable total function $f:\N\to\N$ such that $b\in B\iff f(b)\in A$. For example, the halting problem, the word problem for groups, the nontiling problem, and many other problems are well known to be $m$-complete. Meanwhile, observation \ref{Observation.Provably-m-complete} below will show that it is a strictly stronger hypothesis about a program that it enumerates a \emph{\PA-provably} $m$-complete set, and strictly stronger still to assume that the enumerated set admits specific provable reductions $f$ from the familiar $m$-complete sets.

\begin{theorem}\label{Theorem.Incomparable-m-complete}
 Suppose that $A$ is any computably enumerable $m$-complete decision problem.
 \begin{enumerate}
   \item If $A$ is \PA-provably $m$-complete, with a provable reduction of the halting problem, then amongst the true statements $n\notin A$, there are instances strictly exceeding any given consistency strength.
   \item But even without that extra provability assumption, amongst the true assertions $n\notin A$ there are instances with incomparable consistency strength, and instances of double-jumping consistency strength.
   \item Amongst the consistent statements $n\in A$, there are instances of incomparable consistency strength and of double-jumping consistency strength.
   \item Within both kinds of statements $n\notin A$, $n\in A$, there are effective enumerations that are strongly independent in consistency strength.
 \end{enumerate}
\end{theorem}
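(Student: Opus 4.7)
The plan is to reduce each part to the analogous incomparability result for $\Sigma^0_1$ and $\Pi^0_1$ arithmetic statements established in Theorems~\ref{Theorem.eta-neg-eta-incomparable} and \ref{Theorem.Incomparability-e-halts}, using the $m$-completeness of $A$ to realize these statements as assertions about $A$-membership.

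Part~(1) is the most direct. With the PA-provable reduction $f$ of the halting set $K$ to $A$, for any consistent c.e.\ theory $T$ let $e_T$ be the program searching for a proof of $\bot$ from $T$, so $\PA \vdash \neg\Con(T) \iff e_T \text{ halts on } 0$. Setting $n_T = f(\langle e_T, 0\rangle)$, the provable reduction yields $\PA \vdash \Con(T) \iff n_T \notin A$, whence $\Con(\PA + (n_T \notin A))$ is PA-equivalent to $\Con(T)$. Letting $T$ range over consistent large cardinal theories produces true instances ``$n \notin A$'' of unbounded consistency strength.

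For Parts~(2) and~(3), without PA-provability of the reduction, I would combine the universal computable function of Theorem~\ref{Theorem.Universal-computable-function} with the true but possibly PA-unprovable $m$-reduction. Let $e$ be the universal algorithm based on a strong base theory such as $\ZFC$ plus ``infinitely many inaccessible cardinals,'' and let $h$ be the $m$-reduction of $\{n : e \text{ halts on } n\}$ to $A$. The aim is to transfer the incomparability and double-jumping properties of the halting statements to the assertions $\tau_n := $ ``$h(n) \in A$'' and $\neg\tau_n := $ ``$h(n) \notin A$.'' The main obstacle---and the technical crux---is that the reduction is a $\Pi^0_2$ statement that can fail in nonstandard models: a model $M$ of the base theory might have $e$ halt on $n$ without $M$ enumerating $h(n)$ into $A$, so the direct transfer of consistency strength fails. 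To overcome this, I would replace $e$ by a modified program $e'$ that on input $n$ runs $e$ in parallel with the $A$-enumeration and halts only when $e$ halts \emph{and} the $A$-enumeration has produced $h(n)$, both within a common time bound. This couples the halting of $e'$ to $A$-membership in every model of the base theory. One then verifies that $e'$ retains the universality of Theorem~\ref{Theorem.Universal-computable-function}: any desired finite halting pattern for $e'$ is jointly consistent with the corresponding pattern of $A$-enumeration events, since the base theory places no restrictions on which elements enter $A$ at which stages beyond the c.e.\ definition of $A$. With $e'$ in hand, the arguments of Theorems~\ref{Theorem.eta-neg-eta-incomparable} and \ref{Theorem.Incomparability-e-halts} adapt: one obtains pairs of true ``$h(n) \notin A$'' with incomparable consistency strength and pairs where both ``$h(n) \in A$'' and ``$h(n) \notin A$'' strictly exceed $\PA$ in consistency strength (the double-jump phenomenon), with the parallel argument handling the consistent ``$h(n) \in A$'' statements of Part~(3).

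Part~(4) on strong independence follows from the full flexibility of Theorem~\ref{Theorem.Universal-computable-function}: any nontrivial Boolean combination of the consistency statements $\Con(\PA + \tau_n)$ mentions only finitely many indices $n$, and a witnessing model of the base theory can be arranged, via Theorem~\ref{Theorem.Universal-computable-function}, to make $e'$ exhibit any desired halting pattern on those indices. Each such combination is thus realized in some model of $\PA$, so the family $\{\Con(\PA + \tau_n)\}_{n \in \N}$ freely generates a countable atomless Boolean algebra of consistency strengths. The entire argument hinges on the construction and verification of $e'$ in Part~(2); once that coupling is established, Parts~(3) and~(4) follow by parallel adaptation.
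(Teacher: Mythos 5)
Your part~(1) is essentially the paper's own argument and is correct. For parts~(2)--(4) you have rightly isolated the crux---the $m$-reduction is true but not \PA-provable, so $A$-membership in nonstandard models is not controlled by the halting behavior of the universal algorithm---but your repair does not close the gap. A first problem is the choice of $h$: you take $h$ to be an $m$-reduction of $\{n : e \text{ halts on } n\}$ to $A$, and since $e$ never truly halts this set is empty, so $m$-completeness guarantees only that $h$ has range disjoint from $A$; $h$ could be a constant function, in which case all your statements ``$h(n)\in A$'' are literally one sentence and no independence is possible. What is needed is a reduction $\pi$ of the full halting problem, with designated instances $\pi(e,n)$, which is what the paper uses.

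The more fundamental problem is that your modified program $e'$ couples halting to $A$-membership in only one direction: if $e'$ halts on $n$ in a model, then $h(n)\in A$ there; but non-halting of $e'$ on $m$ gives no control whatsoever over $h(m)$, which may be enumerated into $A$ at a nonstandard stage having nothing to do with $e$. Yet the incomparability, the double-jump claims, and the $n\notin A$ halves of (2)--(4) all require models in which specific non-membership assertions $h(m)\notin A$ are \PA-consistent (indeed suitably \PA-provable) while others fail, and your construction supplies no mechanism for that. Likewise your universality claim for $e'$---that any finite halting pattern is ``jointly consistent with the corresponding pattern of $A$-enumeration events'' because ``the base theory places no restrictions on which elements enter $A$''---is unjustified and false in general: $A$ is a fixed c.e.\ set about which the base theory proves a great deal, and Theorem~\ref{Theorem.Universal-computable-function} controls only what the chosen theory proves about $e$'s own input/output behavior, not joint statements about $A$; for a badly chosen $h$ the theory may even prove $h(n)\notin A$ outright, so that $e'$ provably never halts. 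The paper closes the gap differently: it defines the universal function relative to the theory $\PA+\Con(\ZFC)+{}$``$\pi$ is a reduction of the halting problem to $A$,'' adding the true (though unprovable) reduction statement as an axiom. Inside every model of that theory the membership pattern of the numbers $\pi(e,n)$ in $A$ is then equivalent, in both directions, to the fully controlled halting pattern of $e$, and the lookup-table mechanism makes the relevant membership and non-membership facts appropriately provable or consistent inside the model; the symmetry of this setup is also what yields part~(3) and the two families in part~(4). If you wish to salvage the program-modification idea, you would still need the witnessing models to believe some such reduction statement---at which point the modification of $e$ becomes unnecessary.
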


\begin{proof}
For statement (1) only, we assume that $A$ is provably $m$-complete, with a provable reduction from the halting problem. For any computably enumerable theory $T$, let $e_T$ be the program that searches for a proof of a contradiction in $T$, halting only when found. By the reduction of the halting problem to $A$, we get a number $n$ such that $e_T$ halts if and only if $n\in A$, provably in \PA. So $\Con(T)$ is provably equivalent to the assertion $n\notin A$, which therefore has consistency strength strictly exceeding $T$.

For the rest of the statements, let us now drop the extra assumption about provable $m$-completeness. We now assume only that $A$ is actually computably enumerable and $m$-complete. Let $\pi$ be a specific computable function that is a reduction to $A$ of the halting problem. Consider a version of the universal computable function $f$, as in theorem \ref{Theorem.Universal-computable-function}, but defined relative to the theory $\PA+\Con(\ZFC)+$``$\pi$ is a reduction of the halting problem to $A$.'' The proof of theorem \ref{Theorem.Universal-computable-function} shows that there is a computable procedure $e$ such that for any $n\neq m$, there is a model of this theory in which $e$ halts on $n$ but not $m$, and another in which $e$ halts on $m$ and not $n$. In the first case, we'll have $\pi(e,n)\in A$, $\pi(e,m)\notin A$ and in the second case vice versa. Because of the way $e$ is defined, if it halts on one number but not another, then in such a model it will be thought inconsistent that it could halt on the other number. Thus, the statements of the form $\pi(e,n)\notin A$ are incomparable in consistency strength, and indeed  these statements form a strongly independent family with respect to consistency strength, just as in theorem \ref{Theorem.Incomparability-e-halts}. Furthermore, any one of these statements is double-jumping, because, once $\pi(e,n)$ is in $A$, then it is inconsistent for it to be out of $A$, and if it is out of $A$, but another number is in $A$, then it is inconsistent for it to be in $A$. This establishes statement (2), and the part of $4$ for assertions of the form $n\notin A$.

The rest of the claims, statement (3) and the part of (4) for $n\in A$, are proved by essentially the same argument, which exhibits a certain symmetry as to whether we were asserting membership or nonmembership of $\pi(e,n)$ in $A$.
\end{proof}

In light of the fact that almost all of the undecidable decision problems commonly identified as ``natural'' are also provably m-complete, theorem \ref{Theorem.Incomparable-m-complete} shows that these decision problems (and their complements) are saturated with instances of incomparable consistency strengths.

\begin{corollary}\ \label{Corollary.Tiling-problems}
 \begin{enumerate}
   \item Amongst assertions of the form
    \begin{quote}
      ``This specific set of polygonal tiles admits a tiling of the plane''
    \end{quote}
    there are consistent instances strictly exceeding any given consistency strength.
   \item There are such assertions with incomparable consistency strength.
   \item There are such assertions having incomparable consistency strength with their own negations---both the statement and its negation jump in consistency strength.
   \item There is an effective enumeration of finite tile sets $t_0, t_1, \dots$, such that the assertions ``tile set $t_n$ admits a tiling of the plane'' have strongly independent consistency strengths.
 \end{enumerate}
\end{corollary}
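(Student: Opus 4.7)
The plan is to apply Theorem \ref{Theorem.Incomparable-m-complete} with the decision problem $A$ taken to be the set of finite polygonal tile sets that do \emph{not} admit a tiling of the plane. The four parts of the corollary are then exactly the $n\notin A$ instantiations of Theorem \ref{Theorem.Incomparable-m-complete}: part~(1) of the corollary is part~(1) of the theorem, parts~(2) and~(3) of the corollary are the incomparability and double-jumping assertions within part~(2) of the theorem, and part~(4) of the corollary is the $n\notin A$ half of part~(4) of the theorem.

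First I would verify that $A$ is a computably enumerable $m$-complete decision problem. Computable enumerability follows from the classical compactness argument on the tree of partial tilings of bounded squares: a tile set fails to tile the plane iff some bounded square region admits no partial tiling, which is a $\Sigma^0_1$ condition. For $m$-completeness I would invoke the Berger--Robinson theorem, which produces an explicit primitive recursive reduction $\pi$ sending each Turing machine program $e$ to a finite Wang tile set $\pi(e)$ that tiles the plane if and only if $e$ fails to halt on empty input. The standard bump-and-notch trick then converts Wang tiles into equivalent polygonal tiles, as required by the wording of the corollary.

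To invoke part~(1) of Theorem \ref{Theorem.Incomparable-m-complete}, I would need the stronger assertion that $\pi$ is \PA-provably a reduction of the halting problem to $A$. The main obstacle, such as it is, lies exactly here: one must confirm that the correctness proof of the Berger--Robinson construction---that the rows of any plane tiling simulate successive configurations of the machine, and that conversely any non-halting computation can be assembled into a tiling---is formalizable in \PA. This is a bookkeeping matter rather than a conceptual one, since the construction is explicit and its verification proceeds by induction on computation steps. Granting this, Theorem \ref{Theorem.Incomparable-m-complete} applies directly to $A$ and yields all four parts of the corollary at once.
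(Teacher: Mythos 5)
Your proposal is correct and matches the paper's proof, which consists precisely of the observation that the tiling problem is \PA-provably $m$-complete with a provable reduction of the halting problem, so that theorem \ref{Theorem.Incomparable-m-complete} applies; your identification of $A$ as the (c.e.) set of non-tiling tile sets and your mapping of the corollary's four parts onto the theorem's conclusions is exactly the intended reading. The extra details you supply (compactness for computable enumerability, Berger--Robinson, the Wang-to-polygonal conversion, and \PA-formalizability of the reduction's correctness) are just the elaboration the paper leaves implicit.
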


\begin{proof}
The tiling problem is provably $m$-complete, with a provable reduction of the halting problem.
\end{proof}\goodbreak

We can just as easily provide corresponding results for statements of the form
\begin{quote}
``this specific diophantine equation $p(\vec x)=0$ has no solution in the integers,''
\end{quote}
where $p$ is explicitly provided as a polynomial over the integers. Such statement are not bounded in consistency strength by any consistent theory, and there are specific instances with incomparable consistency strength and double-jumping consistency strength. There is a particular integer polynomial $p(\vec x,y)$ for which the statements ``$p(\vec x,n)=0$ has a solution in the integers,'' as $n$ varies, have strongly independent consistency strengths.

We can provide similar results for assertions such as
\begin{quote}
``this specific finite group presentation is the trivial group''
\end{quote}
or
\begin{quote}
``this cell in this specific Game of Life position will eventually become alive.''
\end{quote}
And so on. In each case, there are specific such statements with incomparable consistency strength, with double-jumping consistency strength, and schemes of such statements with strongly independent consistency strength.

What I take these arguments to show is that there is pervasive nonlinearity at every level of the consistency-strength hierarchy, and this is a direct consequence of the difficulty of interpreting even the names of natural numbers. We can describe numbers, even specifying them by concrete computable procedures, but our base theory may just not settle the question of which number is larger, and this leads directly and almost immediately to incomparable consistency strengths.

Meanwhile, regarding the difference between being $m$-complete and being provably $m$-complete, or having provable reductions of specific computably enumerable sets, let me prove that indeed these are not the same.

\begin{observation}\label{Observation.Provably-m-complete}\
 \begin{enumerate}
   \item There are $m$-complete computably enumerable sets that are not \PA-provably $m$-complete.
   \item There are \PA-provably $m$-complete sets $A$, for which none of the usual $m$-complete sets $B$ have a specific reduction function that is \PA-provably a reduction of $B$ to $A$.
 \end{enumerate}
\end{observation}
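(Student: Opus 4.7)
The plan is to construct in each part a c.e.\ set $A$ by invoking the universal algorithm $e$ from Theorem \ref{Theorem.Universal-computable-function}, exploiting the fact that $e$ does not halt in the standard model but can be made to halt on any prescribed input at a necessarily nonstandard stage in an appropriate model of $\PA$.

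For (1), I would enumerate $A$ by the algorithm that, at each stage $s$, enumerates the new halting computations $K_s\setminus K_{s-1}$ provided $e$ has not halted on $0$ within $s$ steps, and enumerates nothing further once $e$ has halted. In the standard model $e$ never halts on $0$, so $A=K$, which is $m$-complete. But in any model $M$ of $\PA$ in which $e$ halts on $0$ at some (necessarily nonstandard) stage $s_0$, the algorithm enumerates only $K_{s_0-1}^M$, a set whose elements are all bounded by $s_0$ and which $M$ therefore regards as finite, hence decidable, hence not $m$-complete. Since $A$ fails to be $m$-complete in some model of $\PA$, the theory $\PA$ does not prove $A$ is $m$-complete.

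For (2), I would modify the encoding rather than halt the enumeration: at each stage $s$, enumerate $2n$ for every newly discovered $n\in K_s\setminus K_{s-1}$ if $e$ has not yet halted on $0$, and enumerate $2n+1$ once $e$ has halted. In the standard model, $A=\{2n:n\in K\}$ is $m$-complete via $n\mapsto 2n$. In a model $M$ of $\PA$ in which $e$ halts on $0$ at stage $s_0$, one computes $A^M=\{2n:n\in K_{s_0-1}^M\}\cup\{2n+1:n\in K^M\setminus K_{s_0-1}^M\}$, and within $M$ this set is still $m$-complete via the (model-dependent) reduction sending $n$ to $2n$ if $n$ halts in fewer than $s_0$ steps and to $2n+1$ otherwise. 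Since $A$ is thereby $m$-complete in every model of $\PA$, the completeness theorem gives $\PA\proves$ ``$A$ is $m$-complete.''

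The harder step will be the second half of (2): showing that no fixed computable function $g$ can be $\PA$-provably a reduction from a familiar $m$-complete set $B$ (such as $K$) to $A$. For this I would argue as follows. In the standard model $g(n)\in A$ forces $g(n)$ to be even with $g(n)/2\in K$ iff $n\in B$; but by Theorem \ref{Theorem.Universal-computable-function} one may choose a model of $\PA$ in which $e$ halts on $0$ at a nonstandard stage $s_0$ smaller than the halting time of the intended witness $k=g(n)/2$ for some $n\in B$, so that in that model $g(n)\notin A^M$ while $n\in B$, breaking $g$ as a reduction. Thus every specific computable $g$ admits a model of $\PA$ refuting it as a reduction, and none is $\PA$-provably a reduction, as required.
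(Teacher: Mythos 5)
Your part (1) and the first half of part (2) are essentially correct, and they are in substance the same construction as the paper's: a c.e.\ enumeration of (a coding of) the halting problem that is modified when a ``trigger'' event occurs --- an event that never occurs in the standard model but consistently occurs at a nonstandard stage. (The paper's trigger is finding a proof of a contradiction in a strong theory $T$; yours is the universal program $e$ halting on $0$; this difference is cosmetic.) Two small remarks: in (1), the elements enumerated need not literally be bounded by $s_0$ unless you fix the convention that stage-$s$ approximations only admit indices $\leq s$; what you really need, and do have, is that the enumerated set is decidable inside $M$ (bounded-time simulation with parameter $s_0$), hence not $m$-complete there since \PA\ proves $K$ is undecidable. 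In the first half of (2), the jump from ``true in every model'' to ``\PA-provable'' is fine, but it silently requires that the parameterized reduction argument be carried out inside an arbitrary model, which you should at least gesture at.

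The genuine gap is in the second half of (2), exactly the step you flag as harder. Your proposed failure scenario is impossible: if $n$ is a standard element of $B$ and $g$ is a genuine reduction in the standard model, then the witness $k=g(n)/2$ is a standard halting program, so its halting time is a standard number, whereas $e$ can only halt at a nonstandard stage $s_0$ in any model of \PA\ (since it does not halt in $\N$ and \PA\ verifies true bounded facts). So one can never arrange $s_0$ below the halting time of a standard witness; for standard inputs your even/odd recoding agrees with the standard picture, and the purported counterexample evaporates. Worse, it is then not clear that your set $A$ even has the desired property: ruling out \emph{every} standard $g$ that \PA\ might prove to be a reduction requires an argument about nonstandard inputs and the race between $e$'s halting time and the runtimes of the coded programs, which your construction gives you no handle on. The paper's construction is designed precisely to avoid this: when the trigger fires at (nonstandard) code $c$, the set swallows \emph{all} numbers $\leq c$ and only then continues with a shifted copy of $K$ above $c$. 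In the resulting model every standard number belongs to $A$; so if $g$ were \PA-provably a reduction of $K$ to $A$ and $n_0$ is a standard, \PA-provably non-halting program, then \PA\ proves $g(n_0)\notin A$, yet $g(n_0)$ is a standard number and hence lies in $A$ in that model --- a contradiction. To repair your proposal, replace the even/odd recoding by this ``fill in an initial block, then shift'' device (which still yields \PA-provable $m$-completeness by the same two-case parameterized reduction), and run the provably-non-halting-instance argument just described.
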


\begin{proof}
The halting problem is, of course, a \PA-provably $m$-complete computably enumerable set. Consider the computably enumerable set $A$, which enumerates the halting problem, unless it happens to find a proof of a contradiction in some fixed strong theory $T$, in which case it enumerates every number into $A$, spoiling the set. If $T$ is actually consistent, then this algorithm will enumerate the halting problem, which is $m$-complete. But this set is not \PA-provably complete, since it is consistent with \PA\ that $\neg\Con(T)$, in which case the set will be all of $\N$ and therefore not $m$-complete. This proves statement (1).

For statement (2), let us simply modify the previous algorithm, so that when the proof of a contradiction in $T$ is found, the program enumerates all numbers up to the code of that proof into the set, but then starts over above this point by enumerating a shifted copy of the halting problem. In any model of \PA, this set will either be the halting problem or a shifted copy of the halting problem (with a filled-in block below), and hence $m$-complete. So we can prove in \PA\ that this algorithm enumerates an $m$-complete computably enumerable set. But there will be no particular reduction function from the usual halting problem (or any of the other commonly considered $m$-complete sets) that we can prove in \PA\ is a reduction to $A$, since it is consistent with \PA\ that every standard number is in $A$, and so we will get wrong answers for the instances of provable nonhalting programs in such a model.
\end{proof}

It it very natural to inquire whether one can prove theorem \ref{Theorem.Incomparable-m-complete} under the weaker assumption only that $A$ is computably enumerable and undecidable, rather than $m$-complete. Theorem \ref{Theorem.c.e.nondecidable-set} gets part of the result, but does not achieve incomparability in the consistency assertions.

\begin{question}
 Does every computably enumerable noncomputable set $A$ admit statements $n\notin A$ of incomparable consistency strength?
\end{question}

The answer is no, proved by Uri Andrews after a talk I gave on the topics of this article for the Madison Logic Seminar.

\begin{theorem}[Uri Andrews]\label{Theorem.c.e.-undecidable-but-linear}
 There is a c.e.~undecidable set $A$, which is Turing equivalent to the halting problem, but for which the assertions $n\notin A$ are linearly ordered by consistency strength. Also the assertions $n\in A$ are linearly ordered by consistency strength.
\end{theorem}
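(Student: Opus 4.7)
The plan is to construct $A$ as a c.e.\ set via an explicit recursive enumeration that ties each membership assertion, provably in $\PA$, to a specific level of an iterated consistency tower. Fix the tower $T_0=\PA$, $T_{n+1}=T_n+\Con(T_n)$; the statements $\Con(T_n)$ form a $\Pi^0_1$ chain linearly ordered by consistency strength, and their negations $\neg\Con(T_n)$ form a $\Sigma^0_1$ chain with the same order.

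Using the Kleene recursion theorem, I would define a program $e$ whose enumeration of $A=W_e$ is self-referentially designed. For each input $n$, the program commits to a target level $g(n)$ via a computable function $g$, and enumerates $n$ into $A$ exactly when it discovers a witness tied to $T_{g(n)}$---a (possibly nonstandard) proof of $\bot$ from $T_{g(n)}$, together with coded halting information that the construction needs for the Turing-reduction side. The point is to arrange that $\PA$ proves the schematic equivalence $n\in A \iff \neg\Con(T_{g(n)})$, uniformly in $n$. Consequently each $n\notin A$ is $\PA$-equivalent to $\Con(T_{g(n)})$, and since $\{\Con(T_k)\}_k$ is linearly ordered by consistency strength, so is the family of $n\notin A$ assertions; the symmetric argument handles the $n\in A$ assertions via the $\Sigma^0_1$ chain.

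To force $A\equiv_T K$ and $A$ undecidable, the enumeration order and the growth of $g$ must be engineered so that the real-world pattern of elements entering $A$ is informative enough to compute $K$ from $A$ (and vice versa), without changing the single-instance logical content of any assertion $n\in A$ beyond what is fixed by $T_{g(n)}$. Concretely, the Turing reduction $K\leq_T A$ will consult $A$ at infinitely many coordinates whose arrangement (but not individual values) transmits halting information, using the standard bookkeeping available through the recursion theorem.

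The main obstacle is achieving (i) Turing equivalence to $K$ and (ii) the linear ordering of assertions \emph{simultaneously}. Any $m$-reduction $K\leq_m A$ would propagate the $m$-complete structure of $K$ onto $A$'s assertions and destroy linearity, as Theorem~\ref{Theorem.Incomparable-m-complete} shows. The required reduction must therefore be genuinely Turing and not $m$, relying on the aggregate pattern of the enumeration rather than on single-instance witnesses; each individual witness for $n\in A$ must interact with only the single predetermined level $T_{g(n)}$. Verifying the $\PA$-provable equivalence $n\in A\iff\neg\Con(T_{g(n)})$ uniformly in $n$---not merely in the standard model, where $A$ is genuinely the halting-like set that the construction produces---is the most delicate step, and is where the careful use of the recursion theorem and a uniform arithmetic coding is essential.
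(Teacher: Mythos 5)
There is a genuine gap, and it is fatal to the approach as stated. If $\PA$ proves the equivalence $n\in A\iff\neg\Con(T_{g(n)})$ uniformly in $n$, then in particular this equivalence holds in the standard model; but every level $T_{g(n)}$ of the iterated consistency tower over $\PA$ is actually consistent, so every statement $\neg\Con(T_{g(n)})$ is false, and hence $A=\emptyset$ in the real world. An empty set is computable, so the construction cannot possibly yield a set Turing equivalent to the halting problem, and there is no ``real-world pattern of elements entering $A$'' left to transmit halting information: the provable equivalences you impose completely determine the membership of every $n$, independently of $K$. This is not a matter of delicate bookkeeping with the recursion theorem---the two demands you place on the construction (each membership assertion provably equivalent to a fixed, $K$-independent sentence, and the actual membership pattern coding $K$) directly contradict one another. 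The tension you flag in your last paragraph is real, but it cannot be resolved within your framework.

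The repair is to abandon the idea of pinning each assertion to a pre-fixed tower and instead arrange provable \emph{implications among the membership assertions themselves}, which is what the paper's proof (due to Andrews) does. Take any Turing-complete c.e.\ set $I\of\N$, let $r=\sum_{n\in I}2^{-n}$, and let $A$ be the lower cut $\set{q\in\Q\mid q<r}$ under a natural coding of rationals. Then $A$ is c.e.\ and Turing equivalent to $I$, hence to the halting problem, and the actual membership pattern of $A$ is highly noncomputable. The linearity comes for free from monotonicity of the cut: $\PA$ proves that $p<q$ and $q\in A$ imply $p\in A$, equivalently that $p<q$ and $p\notin A$ imply $q\notin A$. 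Thus any model of $\PA+p\notin A$ is a model of $\PA+q\notin A$ whenever $p<q$, so $\Con(\PA+p\notin A)$ implies $\Con(\PA+q\notin A)$ over $\PA$, and the nonmembership assertions are linearly ordered by consistency strength; the membership assertions are handled symmetrically. Note that the individual assertions here are \emph{not} equivalent to canonical consistency statements, and they need not be: linearity of consistency strength only requires a provable comparability relation among the sentences, which the cut structure supplies while leaving the set free to be Turing complete.
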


\begin{proof}
For any c.e.~undecidable set $I\of\N$, let $r=\sum_{n\in I}\frac{1}{2^n}$ be the real number having binary bit $1$ in the places of $I$. Let $A$ be the rational cut determined by $r$, the set $\set{q\in\Q\mid q<r}$. Let us assume we have naturally encoded rational numbers with natural numbers. The set $A$ is c.e., because as natural numbers are enumerated into $I$, we learn improved lower bounds to $r$ and can enumerate the corresponding rational numbers into $A$. The set $A$ is Turing equivalent to $I$, since with $I$ as an oracle we can compute $r$ well enough so as to answer definitively about any given rational number; and conversely with $A$ as an oracle we can compute $I$. So by taking $I$ to be Turing complete we shall have $A$ Turing equivalent to the halting problem.

The key thing to notice about $A$, however, is that for rational numbers, if $p<q$ and $q\in A$, then $p\in A$. Equivalently, $p<q$ and $p\notin A$ implies $q\notin A$, and this will be provable in \PA. So there is a provable linear relation about any two nonmembers of $A$. The consequence of this is that the assertions $p\notin A$ and $q\notin A$ cannot be incomparable in consistency strength, since any model of the former is necessarily a model of the latter, and so $\Con(\PA+p\notin A)$ implies $\Con(\PA+q\notin A)$. So the nonmembership assertions $p\notin A$ are linearly ordered by consistency strength. An essentially similar argument works with the positive assertions $p\in A$.
\end{proof}

In light of theorem \ref{Theorem.c.e.nondecidable-set}, it follows that the consistency strengths of the assertions $p\notin A$ for the set $A$ mentioned in the proof of theorem \ref{Theorem.c.e.-undecidable-but-linear} form a linear order of height $\omega$. And the consistency strengths of the statements $p\in A$ form a linear hierarchy of order type $\omega^*$.

\section{Natural instances of ill-foundedness and incomparability via cautious theory enumerations}\label{Section.Cautious-theories}

Let me now present another large class of examples. Imagine that we believed in a certain computably enumerable theory, such as \PA, \ZFC, or \ZFC\ plus large cardinals, whose axioms we intended to enumerate, but \emph{cautiously}. By doing so, we can often enumerate the theory in a sensible, natural manner, but with a strictly lower consistency strength or with incomparable consistency strengths. These cautious enumeration theories will thereby provide natural instances both of ill-foundedness and nonlinearity in the hierarchy of consistency strength.

As a general umbrella term, I shall say that one has a \emph{cautious enumeration} of a theory, if one is enumerating the axioms of the theory according to a procedure that will continue as long as one hasn't encountered a certain kind of contrary indicator, a reason to doubt the truth of the theory. Different cautious enumerations of a theory will arise depending on the specific contrary indicator that is required to halt the enumeration.

I shall also use the term to refer to specific natural theories. For example, let the \emph{cautious enumeration of \ZFC} be the enumeration of \ZFC\ that continues as long as we have not yet found a proof in what we have enumerated so far that \ZFC\ is inconsistent. I denote the resulting theory by $\ZFCcautious$. 
In order to halt the enumeration, notice that we do not require an explicit contradiction in \ZFC, but rather only a proof that there is such a contradiction. If $\Con(\ZFC)$ is actually true, as we have assumed, and furthermore, if $\Con(\ZFC)$ is consistent with \ZFC, then no confounding proof of inconsistency will ever be found. As a theory, therefore, the cautious enumeration $\ZFCcautious$ will actually have all the same axioms as \ZFC; they are the same theory, but with a different manner of enumeration. This situation will reveal the intentional aspect of the consistency strength hierarchy that I discussed earlier, for the consistency strength of the cautious theory $\ZFCcautious$ will be strictly less than $\ZFC$, even when these theories have the same sentences.

I find the cautious enumeration to be both sensible and realistic---in this sense it is a natural theory---for if we were actually enumerating \ZFC\ and a proof was pointed out to us along the way that the theory we have already committed to proves the full \ZFC\ theory inconsistent, then we would have ample reason to pause and reflect on whether we should continue with the enumeration. I think we would pause the enumeration right then and reconsider. Thus, the cautious enumeration is what we would actually do---and for this reason, I argue, it is a natural theory.

Philosophers of logic have written at length on the principle by which whenever we are inclined to accept a theory $T$, then we should also be inclined to accept the assertion $\Con(T)$ expressing that this theory is consistent. Surely there would be something odd or unnatural about accepting a theory $T$ but refusing to accept $\Con(T)$. In short, if we find it natural to accept theory $T$, then we should also find it natural to accept $\Con(T)$.

The cautious enumerations amount at bottom to the contrapositive of this. Namely, if in the process of enumerating a theory $T$ one observes that it would be provably inconsistent, that is, if one finds a proof of $\neg\Con(T)$ in $T$, then according to the principle we should refrain from finding $T$ acceptable. And this is precisely what the cautious enumeration does. Thus, for this second reason, the cautious enumeration is a natural theory.

\begin{theorem}
 The cautious enumeration $\ZFCcautious$ is an alternative computable enumeration of \ZFC, with a strictly lower consistency strength than \ZFC.
\end{theorem}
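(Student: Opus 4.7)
The plan is to verify (i) that $\ZFCcautious$ is a computable enumeration of the same set of sentences as $\ZFC$, and (ii) that the consistency-strength inequality $\ZFCcautious \ltCon \ZFC$ is strict. For (i), the cautious algorithm is transparently computable, performing a bounded proof-search at each stage for a derivation of $\neg\Con(\ZFC)$ from the finite fragment enumerated so far. That it enumerates precisely $\ZFC$ as a set of sentences follows from the standing background assumption that $\ZFC$ is $\Sigma^0_1$-sound (a mild hypothesis, implied for instance by the existence of any transitive model of $\ZFC$): under that assumption, the false $\Sigma^0_1$ sentence $\neg\Con(\ZFC)$ is unprovable from any finite fragment of $\ZFC$, so the halting criterion is never triggered.

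The easy direction $\Con(\ZFC)\to\Con(\ZFCcautious)$, which gives $\ZFCcautious \leqCon \ZFC$, is verified in $\PA$ by direct inspection of the algorithm: the cautious procedure outputs only axioms of $\ZFC$, so a $\ZFCcautious$-derivation of $\bot$ is uniformly a $\ZFC$-derivation of $\bot$. For the strict direction, the plan is to show $\ZFC\proves\Con(\ZFCcautious)$; combined with G\"odel's second incompleteness theorem $\ZFC\not\proves\Con(\ZFC)$, this yields at once that $\Con(\ZFCcautious)\to\Con(\ZFC)$ is not provable in $\ZFC$, whence $\ZFCcautious\ltCon\ZFC$ strictly. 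The argument for $\ZFC\proves\Con(\ZFCcautious)$ mirrors Feferman's~\cite{Feferman1960:Arithmetization-of-metamathematics-in-a-general-setting} classical intensional-reflection template: by calibrating the halting criterion Feferman-style---so that at each stage one also checks, with a proof bound tied to the G\"odel code of the axiom at hand, that the augmented partial theory does not derive $\bot$---one ensures that any putative inconsistency of $\ZFCcautious$ at a minimal stage $n$ would have been detected earlier, forcing the output to be a strict, and hence consistent, predecessor $T_k$ with $k<n$, a contradiction.

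The main obstacle is exactly this calibration of proof-length bounds, which is the delicate combinatorial core of Feferman's construction; the paper's informal definition of $\ZFCcautious$ must be made precise in Feferman's style for the argument to close. With that technical point resolved, the theorem emerges as a natural instance of Feferman's phenomenon: the theories $\ZFCcautious$ and $\ZFC$ have the very same axioms, yet their consistency statements occupy strictly distinct levels of the hierarchy, illustrating precisely the intentional nature of $\Con(\cdot)$ emphasized in the commentary following Definition~\ref{Definition.Consistency-hierarchy}.
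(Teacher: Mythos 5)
Your part (i) and the easy inequality $\Con(\ZFC)\to\Con(\ZFCcautious)$ are fine (though you invoke $\Sigma^0_1$-soundness where the paper needs only that $\Con(\ZFC)$ is consistent with $\ZFC$). The strict direction, however, rests entirely on the claim $\ZFC\proves\Con(\ZFCcautious)$, and this claim is false---indeed it is refuted by the very incompleteness theorem you plan to combine it with. The cautious enumeration is by design a computable ($\Sigma^0_1$) presentation of a theory whose axioms, under the standing assumption, are exactly those of $\ZFC$; for any such r.e.\ numeration the Hilbert--Bernays--L\"ob derivability conditions are verifiable, so the second incompleteness theorem in Feferman's generalized form (Theorem 5.6 of \cite{Feferman1960:Arithmetization-of-metamathematics-in-a-general-setting}) applies directly to the presentation $\ZFCcautious$: if $\ZFC$ is consistent, then $\ZFC$, being extensionally the same theory as $\ZFCcautious$, cannot prove $\Con(\ZFCcautious)$. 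The Feferman presentations for which consistency \emph{does} become provable are precisely the non-$\Sigma^0_1$ ones (the consistency-filtered, $\Pi^0_1$-style numerations); calibrating the halting criterion in that style would change which theory is under discussion and would forfeit the ``computable enumeration'' clause of the theorem you are asked to prove. So no tuning of proof-length bounds can close the argument: $\Con(\ZFCcautious)$ lies strictly between $\Con$ of the finite fragments and $\Con(\ZFC)$, in the ``room between $\ZFC$ and $\ZFC+\Con(\ZFC)$'' that the paper's commentary describes, and it is not a theorem of $\ZFC$.

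What the paper actually does is weaker and local, and this is the missing idea. Assuming $\ZFC+\Con(\ZFC)$ is consistent, take a model $M$ of $\ZFC+\Con(\ZFC)+\neg\Con(\ZFC+\Con(\ZFC))$. Inside $M$ the cautious enumeration halts, so $\ZFCcautious$ is an $M$-finite fragment of $\ZFC$; since \PA\ proves (by the formalized reflection theorem) that $\ZFC$ proves the consistency of each of its finite fragments, $M$ thinks $\ZFC\proves\Con(\ZFCcautious)$. Since $M$ also thinks $\ZFC+\neg\Con(\ZFC)$ is consistent, it can build a model of $\ZFC+\Con(\ZFCcautious)+\neg\Con(\ZFC)$, which witnesses that $\Con(\ZFCcautious)$ does not provably imply $\Con(\ZFC)$ over the base theory. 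The statement ``$\ZFC$ proves $\Con(\ZFCcautious)$'' is thus available only inside suitable nonstandard models, where the enumeration has actually halted---never outright, which is exactly where your proposal fails.
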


\begin{proof}
We have already explained that if $\Con(\ZFC)$ is consistent with \ZFC, then the cautious enumeration $\ZFCcautious$ will never encounter the confounding proof of inconsistency, and so it will fully enumerate all the \ZFC\ axioms. Meanwhile, I claim that the cautious theory $\ZFCcautious$ has a strictly weaker consistency strength than \ZFC. We can easily prove, of course, that $\ZFCcautious\of\ZFC$ and consequently that $\Con(\ZFC)\to\Con(\ZFCcautious)$. But if $\ZFC+\Con(\ZFC)$ is consistent, then there is a model of the theory $\ZFC+\Con(\ZFC)+\neg\Con(\ZFC+\Con(\ZFC))$. In such a model $M$, while $\ZFC$ is consistent, nevertheless there will be a proof from \ZFC\ that it is not, and so $M$ will think that $\ZFCcautious$ consists of finitely many axioms of $\ZFC$. Since we can prove even in \PA\ that \ZFC\ proves each of its finite subsets is consistent, it follows that $M$ thinks that $\ZFC$ proves $\Con(\ZFCcautious)$. By the incompleteness theorem, it thinks $\ZFC+\neg\Con(\ZFC)$ is consistent, and so it can build a model of \ZFC\ in which $\Con(\ZFCcautious)$ holds, but not $\Con(\ZFC)$. So the cautious theory $\ZFCcautious$ is strictly weaker than \ZFC\ in consistency strength, even though they enumerate the same theory.
\end{proof}

It might be a little surprising that a theory can have a strictly weaker consistency strength over a base theory than the base theory itself. But it shouldn't be too surprising, since there is room between $\ZFC$ and $\ZFC+\Con(\ZFC)$ for other consistency statements, and that is what is going on here. The theorem amounts to the claim that $\ZFC+\Con(\ZFCcautious)$ is a strictly weaker theory than $\ZFC+\Con(\ZFC)$.

Consider next the \emph{doubly cautious} enumeration $\ZFC^{\circ\circ}$, where we enumerate the \ZFC\ axioms as usual, but continue only as long as we have not yet found a proof in \ZFC\ that \ZFC\ is inconsistent or even a proof in \ZFC\ that there is such a proof of inconsistency. In other words, we stop the enumeration when we find a proof from \ZFC\ either of $\neg\Con(\ZFC)$ or of $\neg\Con(\ZFC+\Con(\ZFC))$.

\begin{theorem}
 The doubly cautious enumeration of \ZFC\ is an alternative computable enumeration of \ZFC, with strictly weaker consistency strength than even the cautious enumeration.
\end{theorem}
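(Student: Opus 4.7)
The plan is to follow the preceding theorem's proof almost verbatim, shifted up by one level in the consistency tower. First, the claim that $\ZFC^{\circ\circ}$ enumerates all of \ZFC\ holds under the natural background hypothesis $\Con(\ZFC + \Con(\ZFC + \Con(\ZFC)))$: under this, both $\Con(\ZFC)$ and $\Con(\ZFC + \Con(\ZFC))$ hold, so \ZFC\ proves neither $\neg\Con(\ZFC)$ nor $\neg\Con(\ZFC + \Con(\ZFC))$, and neither stopping trigger of $\ZFC^{\circ\circ}$ ever fires. The easy direction $\Con(\ZFCcautious) \to \Con(\ZFC^{\circ\circ})$ is immediate, since $\ZFC^{\circ\circ}$ is always a sub-enumeration of $\ZFCcautious$: its halting condition is the disjunction of $\ZFCcautious$'s trigger with a new one, so at every finite stage the axioms of $\ZFC^{\circ\circ}$ form a subset of those of $\ZFCcautious$.

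For the strict inequality, the goal is to exhibit a model of $\ZFC + \Con(\ZFC^{\circ\circ}) + \neg\Con(\ZFCcautious)$ assuming only the consistency of $\ZFC + \Con(\ZFC + \Con(\ZFC))$. By incompleteness applied to that theory, there is a model
\[
M \satisfies \ZFC + \Con(\ZFC + \Con(\ZFC)) + \neg\Con(\ZFC + \Con(\ZFC + \Con(\ZFC))).
\]
In $M$, the first conjunct prevents any finite subtheory of \ZFC\ from proving $\neg\Con(\ZFC)$, so $M$ regards $\ZFCcautious$ as fully enumerating \ZFC\ and accordingly identifies $\Con(\ZFCcautious)$ with $\Con(\ZFC)$. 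The second conjunct tells $M$ that \ZFC\ does prove $\neg\Con(\ZFC + \Con(\ZFC))$, which triggers the new halting condition of $\ZFC^{\circ\circ}$ at some (possibly nonstandard) finite stage $k' \in M$, so $M$ views $\ZFC^{\circ\circ}$ as the finite subtheory $\ZFC_{k'}$. Using the fact, provable in \PA, that \ZFC\ proves $\Con(\ZFC_n)$ for every $n$, $M$ concludes that \ZFC\ itself proves $\Con(\ZFC^{\circ\circ})$.

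Consequently, in $M$'s arithmetic the theory $\ZFC + \Con(\ZFC^{\circ\circ}) + \neg\Con(\ZFCcautious)$ collapses to $\ZFC + \neg\Con(\ZFC)$; and since $M \satisfies \Con(\ZFC)$, Gödel's incompleteness theorem applied inside $M$ delivers $M \satisfies \Con(\ZFC + \neg\Con(\ZFC))$, hence $M \satisfies \Con(\ZFC + \Con(\ZFC^{\circ\circ}) + \neg\Con(\ZFCcautious))$. This latter assertion is $\Pi^0_1$, hence upward absolute from $M$ to the real world, so the theory truly is consistent, yielding an actual witness model and finishing the strictness. The main obstacle, just as in the cautious case, is ensuring that the two internal identifications used inside $M$ --- of $\Con(\ZFCcautious)$ with $\Con(\ZFC)$, and of $\Con(\ZFC^{\circ\circ})$ with $\Con(\ZFC_{k'})$ --- are genuine provable equivalences in $M$'s arithmetic rather than external observations, which requires that the dovetailed enumeration algorithms be tracked step by step inside $M$ and that the partial reflection schema $\ZFC \vdash \Con(\ZFC_n)$ be available uniformly, in particular at the nonstandard index $k'$.
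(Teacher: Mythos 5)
Your setup matches the paper's: you choose essentially the same model $M\satisfies\ZFC+\Con(\ZFC+\Con(\ZFC))+\neg\Con(\ZFC+\Con(\ZFC+\Con(\ZFC)))$, observe that in $M$ the cautious enumeration never halts (so $\ZFCcautious=\ZFC$ there) while the doubly cautious one halts at a stage $k'$, and use the \PA-provable fact that \ZFC\ proves the consistency of each of its finite fragments to get that $M$ thinks \ZFC\ proves $\Con(\ZFC^{\circ\circ})$. The gap comes in the next step, the ``collapse'' of $\ZFC+\Con(\ZFC^{\circ\circ})+\neg\Con(\ZFCcautious)$ to $\ZFC+\neg\Con(\ZFC)$ inside $M$. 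Only one direction of the needed identification is available: since $\ZFCcautious$ is provably a subtheory of \ZFC, we get $\Con(\ZFC)\to\Con(\ZFCcautious)$, hence $\neg\Con(\ZFCcautious)\to\neg\Con(\ZFC)$. The converse implication $\neg\Con(\ZFC)\to\neg\Con(\ZFCcautious)$ is exactly what is \emph{not} provable---indeed it cannot be, for that would make $\ZFCcautious$ equiconsistent with \ZFC, contradicting the preceding theorem; and $M$ itself, believing $\Con(\ZFC+\Con(\ZFC))$, can internalize that theorem's proof and so explicitly refutes the claim that its \ZFC\ proves $\Con(\ZFCcautious)\to\Con(\ZFC)$. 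Concretely, in a model $N$ of $\ZFC+\neg\Con(\ZFC)$ built inside $M$, the cautious enumeration halts at some stage and $\ZFCcautious^N$ is a finite fragment of \ZFC, which may perfectly well be consistent from $N$'s point of view---so your $N$ need not satisfy $\neg\Con(\ZFCcautious)$, and the consistency of $\ZFC+\neg\Con(\ZFC)$ does not yield the consistency of the theory you actually need. The equivalence of $\Con(\ZFCcautious)$ with $\Con(\ZFC)$ in $M$ is an external observation about which axioms get enumerated, not a provable equivalence, and no amount of step-by-step tracking of the enumeration inside $M$ will make it one; this is precisely the intensionality of consistency statements that the whole section turns on.

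The repair is the paper's move: apply the second incompleteness theorem to the \emph{cautious presentation} rather than to the standard one. Since $M$ thinks $\ZFCcautious$ is consistent (it is \ZFC\ there and $\Con(\ZFC)$ holds), $M$ thinks $\ZFCcautious+\neg\Con(\ZFCcautious)$ is consistent, and any model $N$ of this theory inside $M$ is a model of \ZFC\ satisfying $\neg\Con(\ZFCcautious)$ outright; since $M$ thinks $\ZFCcautious$ (being \ZFC) proves $\Con(\ZFC^{\circ\circ})$, the same $N$ satisfies $\Con(\ZFC^{\circ\circ})$, which is what the strict inequality requires. In short, you applied incompleteness to $\Con(\ZFC)$ and tried to transfer the resulting inconsistency to $\ZFCcautious$ via an identification that is not provable; incompleteness must instead be applied directly to $\Con(\ZFCcautious)$.
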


\begin{proof}
Since $\ZFC^{\circ\circ}$ is a subtheory of $\ZFCcautious$, we easily prove in \ZFC\ that $\Con(\ZFCcautious)$ implies $\Con(\ZFC^{\circ\circ})$. To show the implication is not reversible, consider a model of $\ZFC+\Con(\ZFC)+\Con(\ZFC+\Con(\ZFC))+\neg\Con(\ZFC+\Con(\ZFC+\Con(\ZFC)))$. In this model, because $\Con(\ZFC+\Con(\ZFC))$ holds, the cautious enumeration will never find a proof of $\neg\Con(\ZFC)$ from \ZFC, and so it will enumerate the full theory, $\ZFCcautious=\ZFC$. But because the models believes $\neg\Con(\ZFC+\Con(\ZFC+\Con(\ZFC)))$, it will find a proof from \ZFC\ of $\neg\Con(\ZFC+\Con(\ZFC))$. In other words, it will find a proof from \ZFC\ that there is a proof from \ZFC\ that \ZFC\ is inconsistent. Thus, the doubly cautious enumeration will find its stopping point, and so $M$ thinks $\ZFC^{\circ\circ}$ is a finite fragment of $\ZFC$. So it thinks that $\ZFCcautious$ proves $\Con(\ZFC^{\circ\circ})$. Since by $\Con(\ZFC)$ it thinks $\ZFCcautious+\neg\Con(\ZFCcautious)$ is consistent, it can therefore build a model of \ZFC\ in which $\Con(\ZFC^{\circ\circ})$ holds, but not $\Con(\ZFCcautious)$. So the doubly cautious theory $\ZFC^{\circ\circ}$ is strictly weaker in consistency strength than the cautious theory $\ZFCcautious$.
\end{proof}

We can obviously iterate this to the triply cautious enumeration, and so on, resulting in an effective list of alternative enumerations of \ZFC, all enumerating exactly the same full \ZFC\ theory, but doing so with progressively weaker consistency strengths.
 $$\cdots<\ZFC^{\circ\circ\circ}<\ZFC^{\circ\circ}<\ZFCcautious<\ZFC$$
Thus, we have ill-foundedness in consistency strength even amongst the enumerations of the same \ZFC\ theory. And all these enumerations are quite sensible, since if we believe in \ZFC, then the discovery that it is provably inconsistent or that it is provably provably inconsistent would surely make us pause and reflect. So I find this to be a natural instance of ill-foundedness in the hierarchy of consistency strength.

One might also be curious about another slightly less cautious enumeration of \ZFC, what I call the \emph{stop-when-hopeless} enumeration $\ZFC^\otimes$, where we enumerate \ZFC\ as usual, but continue adding axioms only so long as we have not found any explicit contradiction yet in what has been enumerated. If \ZFC\ is consistent, then no such contradiction will ever be found, and so the resulting theory $\ZFC^\otimes$ has all the same axioms as \ZFC. Is this theory also strictly weaker in consistency strength than \ZFC? The answer is no, I claim, the stop-when-hopeless enumeration $\ZFC^\otimes$ is equiconsistent with \ZFC. Since $\ZFC^\otimes$ is provably a subtheory of $\ZFC$, we can easily prove that $\Con(\ZFC)\to\Con(\ZFC^\otimes)$. Conversely, if we are in a model of \ZFC\ in which $\ZFC^\otimes$ is consistent, then the enumeration process must not ever have stopped, and so it is the same theory as \ZFC. So in \ZFC, we can prove $\Con(\ZFC)\iff\Con(\ZFC^\otimes)$. So the stop-when-hopeless enumeration offers no advantage in consistency strength.

In general, for any theories $S$ and $T$, we may consider the \emph{$T$-cautious enumeration of $S$}, where we enumerate the axioms of $S$ as long as we have not yet seen an explicit contradiction in $T$. In this terminology, $\ZFCcautious$ is the $\ZFC+\Con(\ZFC)$-cautious enumeration of \ZFC, and $\ZFC^{\circ\circ}$ is the $\ZFC+\Con(\ZFC)+\Con(\ZFC+\Con(\ZFC))$-cautious enumeration of \ZFC. The stop-when-hopeless enumeration $\ZFC^\otimes$ is simply the \ZFC-cautious enumeration of \ZFC.

Meanwhile, let me now offer several additional alternative cautious enumerations of \ZFC, each looking for a slightly different particular reason to halt the enumeration, each of them sensible and compelling. Yet the resulting theories have strictly incomparable consistency strengths, even though they all actually enumerate the same \ZFC\ theory.

\begin{theorem}
 There are diverse alternative cautious enumerations of the \ZFC\ theory, of incomparable consistency strengths to one another and hence all strictly weaker than \ZFC. Indeed, there is an effective enumeration of theories $\ZFC^{(n)}$, each actually having exactly the same axioms as \ZFC, but whose consistency assertions are strongly independent, generating the countable free Boolean algebra.
\end{theorem}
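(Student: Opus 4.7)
The plan is to use the universal computable function from Theorem~\ref{Theorem.Universal-computable-function} to build an infinite family of cautious enumerations of $\ZFC$ whose stopping conditions are independently controllable across models. First, I take $T_0 = \ZFC + \neg\Con(\ZFC)$, which is consistent by the second incompleteness theorem (assuming $\Con(\ZFC)$), and let $e$ be the program given by Theorem~\ref{Theorem.Universal-computable-function} applied to $T_0$. For each natural number $n$, define $\ZFC^{(n)}$ to be the enumeration of the $\ZFC$ axioms that at stage $s$ enumerates the $s$-th axiom and then halts if the program $e$ has halted on input $n$ within $s$ steps; otherwise it proceeds to stage $s+1$. Since in the standard model $e$ diverges on every standard input (by the proof of Theorem~\ref{Theorem.Universal-computable-function}), the stopping condition never triggers externally, so each $\ZFC^{(n)}$ enumerates exactly the axioms of $\ZFC$, giving the ``same axioms as $\ZFC$'' clause.

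Next, to prove strong independence of the consistency assertions, I would fix any finite tuple of distinct indices $n_1,\dots,n_k$ and any pattern $S \subseteq \{1,\dots,k\}$, and produce a model of $\ZFC$ realizing the corresponding Boolean combination of the $\Con(\ZFC^{(n_i)})$. Apply Theorem~\ref{Theorem.Universal-computable-function} to $T_0$ with the partial function $g$ whose domain is exactly $\{n_i : i \in S\}$: this yields a model $M \models T_0$ in which $e$ halts on precisely the inputs in $\{n_i : i \in S\}$, with divergence provable in $M$ on every other standard input. For $i \in S$, the program $e$ halts on $n_i$ at some $M$-stage $t$, so in $M$ the enumeration of $\ZFC^{(n_i)}$ terminates at stage $t$, yielding an $M$-finite subtheory of $\ZFC$; since $\ZFC$ proves each of its finite subtheories is consistent, $M \models \Con(\ZFC^{(n_i)})$. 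For $i \notin S$, $M$ sees that $e$ never halts on $n_i$ at any stage, so $\ZFC^{(n_i)}$ equals $\ZFC$ in $M$, and because $M \models \neg\Con(\ZFC)$, we get $M \models \neg\Con(\ZFC^{(n_i)})$. Thus every Boolean pattern on the $\Con(\ZFC^{(n)})$'s is simultaneously realized by some model of $\ZFC$, so these assertions freely generate a countable Boolean algebra; in particular, they are pairwise incomparable in consistency strength and each is strictly weaker than $\ZFC$, delivering the first sentence of the theorem as well.

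The main obstacle, and the crux of the construction, lies in picking the right trigger for the cautious enumeration. The naive choice---stopping $\ZFC^{(n)}$ when a $\ZFC$-proof of $e(n){\downarrow}$ is located, directly paralleling the original definition of $\ZFCcautious$---collapses the independence, because any model of $\neg\Con(\ZFC)$ by fiat proves every sentence, so that trigger fires for every $n$ and $\Con(\ZFC^{(n)})$ becomes $\ZFC$-provable across the board. The fix is to key the trigger on the raw $\Sigma^0_1$ halting event ``$e$ halts on $n$ by stage $s$,'' which remains under our control via the universal algorithm even inside models where $\ZFC$ is internally inconsistent. A parallel subtlety is that the base theory $T_0$ for the universal algorithm must contain $\neg\Con(\ZFC)$: without that ingredient the model $M$ would satisfy $\Con(\ZFC)$ and hence $\Con(\ZFC^{(n_i)})$ for every $i$, uniformly trivializing the $\neg\Con$ half of every mixed pattern.
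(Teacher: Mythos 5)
Your definitions of the theories $\ZFC^{(n)}$, your treatment of the non-halting indices, and your remark that the trigger should be the raw halting event rather than a provable-halting event all match the paper's construction. The argument breaks, however, at the halting indices: from $M\models\ZFC+\neg\Con(\ZFC)$ and the fact that $\ZFC^{(n_i)}$ is an $M$-finite fragment of $\ZFC$ you cannot conclude $M\models\Con(\ZFC^{(n_i)})$. The reflection fact you cite is a fact about provability; even in its formalized form it yields only that $M$ thinks $\ZFC$ proves the consistency of that fragment, which is vacuous information inside $M$, since $M$ thinks $\ZFC$ proves everything. Moreover the fragment has nonstandard length (the halting time of $e$ is necessarily nonstandard), so the genuine reflection theorem for standard finite fragments does not apply either. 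Indeed, since $M$ thinks $\ZFC$ is inconsistent, the axioms occurring in $M$'s least inconsistency proof form an $M$-finite set that is already inconsistent in $M$'s view; whether the cutoff stage $t$ falls below or above those axioms is not something theorem \ref{Theorem.Universal-computable-function} gives you any control over, so the intended pattern can simply fail in your single model $M$.

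The paper avoids exactly this difficulty with a two-model argument, and that is the missing idea. Define $f$ relative to $\ZFC+\Con(\ZFC)$ (there is no need to build $\neg\Con(\ZFC)$ into the base theory of the universal algorithm) and realize the desired halting pattern in a model $M\models\ZFC+\Con(\ZFC)$; there the stopping fragments are $M$-finite and $M$ possesses actual $\ZFC$-proofs of their consistency, by formalized reflection. Then let $M$ construct an internal model $N\models\ZFC+\neg\Con(\ZFC)$, whose natural numbers end-extend $\N^M$. The computations of $e$ are unchanged in $N$, the $M$-coded $\ZFC$-proofs of $\Con(\ZFC^{(n_i)})$ force $N\models\Con(\ZFC^{(n_i)})$ for the halting indices, and for the non-halting indices $\ZFC^{(m)}$ is all of $\ZFC$ in $N$, hence inconsistent there. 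The point is that the separation is witnessed in $N$, where the inconsistency of $\ZFC$ appears only above $\N^M$ and therefore cannot infect the cautious fragments, which are coded in $\N^M$. To salvage your one-model version you would have to show the halting time of $e$ can be kept below every $\ZFC$-inconsistency proof of the model, and nothing in your construction provides that.
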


\begin{proof}
Let $f$ be the universal computable function defined relative to the theory $\ZFC+\Con(\ZFC)$. This algorithm searches for a proof in that theory that $f$ does not agree exactly with a certain finite list of input/output pairs, and when found, it computes exactly in accordance with that list, diverging on all other input. Let $\ZFC^{(n)}$ be the theory that continues enumerating the \ZFC\ axioms as long as $f(n)$ has not yet halted.

Each $\ZFC^{(n)}$ is sensible, each cautious in a slightly different and independent way, for each of them enumerates the \ZFC\ axioms provided a certain kind of contrary information is not encountered. Namely, the $n$th theory $\ZFC^{(n)}$ is watching to see whether $f(n)$ halts, and if it does, then this is a perfectly sound reason to doubt the veracity of \ZFC, because the halting of $f(n)$ occurs only when there is a proof that $f$ doesn't behave in a manner that it has been observed to behave, showing that $\ZFC+\Con(\ZFC)$ has false consequences; on these grounds, it is natural for $\ZFC^{(n)}$ to stop its enumeration. Meanwhile, since the program $f$ does not actually ever halt (in the standard model $\N$), each theory $\ZFC^{(n)}$ actually has all the same axioms as \ZFC. The disagreements between these theories are merely theoretical possibilities, realized only possibly in a nonstandard model and even then only on nonstandard-length axioms.

The theories are bounded above in consistency strength by \ZFC, simply because each $\ZFC^{(n)}$ is provably a subtheory of \ZFC. But indeed, I claim that they are each strictly strictly weaker than \ZFC\ in consistency strength, for if $n\neq m$, then I shall now prove that $\ZFC^{(n)}$ and $\ZFC^{(m)}$ have incomparable consistency strength over \ZFC. Assume $n\neq m$. By the properties of the universal computable function, there is a model $M$ of $\ZFC+\Con(\ZFC)$ in which $f(n)$ halts but $f(m)$ does not. In this model, $\ZFC^{(n)}$ is a finite fragment of \ZFC, but $\ZFC^{(m)}$ is fully the same as $\ZFC$. Since $M$ thinks $\Con(\ZFC)$, it can build a model $N$ that it thinks satisfies $\ZFC+\neg\Con(\ZFC)$. Because the function $f$ halted at a finite stage of $M$, the model $N$ will agree with this, since $\N^M$ must be an initial segment of $\N^N$. So the computation of $f$ on inputs $n$ and $m$ is the same in $N$ as it is in $M$. Since we can prove even in \PA\ that the theory \ZFC\ proves the consistency of any of its finite fragments, it follows that $M$ thinks $\Con(\ZFC^{(n)})$ holds in $N$. But since $\neg\Con(\ZFC)$ also holds and $f(m)$ does not halt, it follows that $N$ satisfies $\neg\Con(\ZFC^{(m)})$. So we have a model of \ZFC\ which thinks $\ZFC^{(n)}$ is consistent, but not $\ZFC^{(m)}$. This shows that the consistency of $\ZFC^{(n)}$ does not provably imply the consistency of $\ZFC^{(m)}$ over \ZFC, and so all the theories have incomparable consistency strength.

It is not much more difficult to show that these consistency statements are strongly independent and freely generate the free Boolean algebra. The point is that for any conjunction of the statements $\Con(\ZFC^{(n)})$ or their negations, then we can find a model where $f$ halts on the $n$s for which $\Con(\ZFC^{(n)})$ occurs positively in the conjunction and not on the $n$s that occur negatively, and this will make the conjunction altogether true; or conversely we can find a model making it false. By controlling which $n$ the universal algorithm halts on, we can control exactly which consistency statements are strong and which are weak, showing that every possible combination of those consistency statements is nontrivial. So these statements are strongly independent and therefore freely generate the free Boolean algebra.
\end{proof}

Let $I$ be the assertion that there is an inaccessible cardinal.

\begin{theorem}
 The cautious enumeration of $\ZFC+I$ has consistency strength strictly between \ZFC\ and $\ZFC+I$. Indeed, there is an effective enumeration of infinitely many cautious enumerations of this theory, with strongly independent incomparable consistency strengths, whose consistency statements freely generate the countable free Boolean algebra.
\end{theorem}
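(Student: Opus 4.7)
The plan is to adapt the proofs of the preceding theorems on cautious enumerations of $\ZFC$, with $\ZFC+I$ playing the role of $\ZFC$; I write $(\ZFC+I)^\circ$ for the cautious enumeration of $\ZFC+I$. For the strict upper bound $(\ZFC+I)^\circ<\ZFC+I$, I would mirror the argument for $\ZFCcautious<\ZFC$. Assuming $\Con(\ZFC+I+\Con(\ZFC+I))$, work in a model $M\satisfies\ZFC+I+\Con(\ZFC+I)+\neg\Con(\ZFC+I+\Con(\ZFC+I))$. In $M$, the inconsistency of $\ZFC+I+\Con(\ZFC+I)$ forces $\ZFC+I\proves\neg\Con(\ZFC+I)$, so the cautious enumeration halts at some finite-in-$M$ stage and $(\ZFC+I)^\circ$ is a finite fragment of $\ZFC+I$ from $M$'s perspective. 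Since $\ZFC+I$ proves the consistency of each of its finite subtheories by reflection, $M$ believes $\ZFC+I\proves\Con((\ZFC+I)^\circ)$, and therefore can build an internal model $N\satisfies\ZFC+I+\neg\Con(\ZFC+I)+\Con((\ZFC+I)^\circ)$, which externally witnesses $\Con((\ZFC+I)^\circ)\not\to\Con(\ZFC+I)$.

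For the strict lower bound $\ZFC<(\ZFC+I)^\circ$, the plan is to arrange the axiom enumeration so that $I$ together with a finite list $F_0$ of $\ZFC$ axioms sufficient to carry out the standard argument that $V_\kappa\satisfies\ZFC$ for $\kappa$ inaccessible appears at the front. Then in any model, either the cautious enumeration does not halt before the end of $F_0$, in which case $(\ZFC+I)^\circ\supseteq F_0\cup\{I\}$ provably derives $\Con(\ZFC)$; or it halts inside $F_0$, in which case the enumerated piece proves $\neg\Con(\ZFC+I)$ from a very short axiom list, and one shows by a separate analysis that this situation is incompatible with $\Con(\ZFC+I)$ holding together with $\Con((\ZFC+I)^\circ)$ in the base theory. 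Either way, $\Con((\ZFC+I)^\circ)\to\Con(\ZFC)$ provably. Strictness follows from $\Con(\ZFC)\not\to\Con(\ZFC+I)$ by incompleteness, since $(\ZFC+I)^\circ$ coincides with $\ZFC+I$ in every model where $\Con(\ZFC+I)$ holds.

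For the strong-independence second claim, I would adapt the universal-algorithm argument used for the $\ZFC^{(n)}$: let $f$ be the universal computable function of Theorem~\ref{Theorem.Universal-computable-function} defined relative to $\ZFC+I+\Con(\ZFC+I)$, and let $(\ZFC+I)^{(n)}$ be the enumeration of $\ZFC+I$ that continues only while $f(n)$ has not halted. Each is a cautious enumeration of $\ZFC+I$ with exactly the same axioms as $\ZFC+I$ in the standard model. By the universal property of $f$, for any finite specification of which inputs halt there is a model realizing that specification; a $V_\kappa$-cutting step as in the proof of Theorem~\ref{Theorem.Incomparability-f(n)-inacc} then produces, for any Boolean combination of the consistency assertions $\Con((\ZFC+I)^{(n)})$, a model of $\ZFC$ satisfying that combination, establishing both strong independence and that these statements freely generate the countable free Boolean algebra. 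The main obstacle is the lower bound: one must arrange the enumeration order carefully and analyze the delicate case of halting at very small stages to ensure $\Con((\ZFC+I)^\circ)$ always provably implies $\Con(\ZFC)$.
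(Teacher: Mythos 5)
Your upper-bound argument (that $(\ZFC+I)^\circ$ lies strictly below $\ZFC+I$) and your family $(\ZFC+I)^{(n)}$ built from the universal function relative to $\ZFC+I+\Con(\ZFC+I)$ follow the paper's proof essentially verbatim (one small slip: the separating models are not produced by a $V_\kappa$-cutting step as in theorem \ref{Theorem.Incomparability-f(n)-inacc}, but by building inside $M$ a model of $\ZFC+I+\neg\Con(\ZFC+I)$ whose natural numbers end-extend $\N^M$, so the halting facts persist and the full-versus-finite-fragment dichotomy does the work). The genuine gap is in the lower bound $\ZFC<(\ZFC+I)^\circ$, exactly where you flag ``the main obstacle.'' The case you defer to ``a separate analysis''---the enumeration halting inside the initial block $F_0$---is the crux, and it has a short resolution which is the paper's device: since the standing assumption is that $\ZFC+I+\Con(\ZFC+I)$ is consistent, there is in fact \emph{no} proof of $\neg\Con(\ZFC+I)$ from $\ZFC+I$ at all, hence none with code below the fixed standard bound where $F_0$ ends; this is a concrete, finitely checkable fact and therefore provable in the base theory. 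So the base theory proves outright that the cautious enumeration contains $F_0\cup\{I\}$ (in the paper's formulation: Zermelo set theory, $I$, and enough $\Sigma_n$-replacement, any confounding proof being necessarily nonstandard), whence provably $(\ZFC+I)^\circ\proves\Con(\ZFC)$ and so $\Con((\ZFC+I)^\circ)\to\Con(\ZFC)$ over the base theory; no delicate case split survives.

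Your strictness argument for that lower bound, however, does not work. First, the coincidence claim is off: $(\ZFC+I)^\circ$ agrees with $\ZFC+I$ in models satisfying $\Con(\ZFC+I+\Con(\ZFC+I))$, that is, models thinking $\ZFC+I$ does not prove $\neg\Con(\ZFC+I)$---not in every model of $\Con(\ZFC+I)$. More importantly, the non-implication from $\Con(\ZFC)$ to $\Con(\ZFC+I)$ does not transfer to the cautious theory: in a model of $\Con(\ZFC)+\neg\Con(\ZFC+I)$ the enumeration halts and $(\ZFC+I)^\circ$ is a finite fragment, which such a model may well regard as consistent---indeed this is precisely the mechanism behind the upper-bound strictness, so the two theories come apart exactly where you need them to agree. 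The correct route, and the paper's, is the sufficiency criterion: since $(\ZFC+I)^\circ$ provably proves $\Con(\ZFC)$, and by the same explicit $V_\kappa$ argument also the iterated consistency statements such as $\Con(\ZFC+\Con(\ZFC))$, a provable implication $\Con(\ZFC)\to\Con((\ZFC+I)^\circ)$ would allow $\ZFC+\Con(\ZFC)$ to prove its own consistency, contradicting the second incompleteness theorem.
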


\begin{proof}
Consider first the cautious enumeration $(\ZFC+I)^\circ$ of the theory $\ZFC+I$, where we enumerate the axioms into this theory, including $I$ itself, up until a stage at which we see a proof in $\ZFC+I$ that $\ZFC+I$ is inconsistent---let us suppose that we include Zermelo set theory plus $I$ and $\Sigma_n$ replacement for all $n$ up to the size of a proof of $\neg\Con(\ZFC+I)$. I should like to emphasize again that we don't insist on seeing an actual proof of a contradiction from this theory, but rather merely a proof that there is one. Since we assume $\ZFC+I+\Con(\ZFC+I)$ is consistent, this cautious enumeration will provide all of the axioms of the full theory $\ZFC+I$. The difficulty is that we are not able to prove this in \ZFC---we had made an extra consistency assumption to deduce it.

The theory $\ZFC+I$ proves $\Con(\ZFC)$, since we can prove that if $\kappa$ is inaccessible, then $V_\kappa$ is a transitive model of \ZFC. Since this proof can be made explicit, the proof can be made using axioms that appear before any supposed (nonstandard) proof that there is a contradiction. Therefore, $\Con(\ZFC)$ will be provable in $(\ZFC+I)^\circ$, provably so, and so this cautious theory is strictly stronger than $\ZFC$ in consistency strength. This argument extends to the iterated consistency statements $\Con(\ZFC+\Con(\ZFC))$ and so on.

Meanwhile, since we have assumed that $\ZFC+I+\Con(\ZFC+I)$ is consistent, by the incompleteness theorem it is consistent with $\neg\Con(\ZFC+I+\Con(\ZFC+I))$. In a model $M$ of this theory, there will come a stage where $\ZFC+I$ proves $\neg\Con(\ZFC+I)$, which will stop the cautious enumeration. So in $M$, the theory $(\ZFC+I)^\circ$ is a finite fragment of $\ZFC+I$. But we can prove in a weak meta theory that $\ZFC+I$ proves the consistency of each of its finite fragments, and so this model thinks that $\ZFC+I$ proves $\Con((\ZFC+I)^\circ)$. Since $M$ has a model of $\ZFC+I+\neg\Con(\ZFC+I)$, it will therefore have a model of $\ZFC+I+\Con((\ZFC+I)^\circ)+\neg\Con(\ZFC+I)$. Therefore, we cannot prove in \ZFC, nor even in $\ZFC+I$, that the consistency of the cautious theory $(\ZFC+I)^\circ$ implies the consistency of $\ZFC+I$. So it has strictly intermediate consistency strength.

For the incomparability result, let $f$ be the universal computable function, defined relative to the theory $\ZFC+I+\Con(\ZFC+I)$. That is, $f$ looks for a proof in this theory that the input/output pattern of $f$ does not agree with some explicit finite list, and when found, the function $f$ halts or diverges on its input exactly in accordance with that list. Let $(\ZFC+I)^{(n)}$ be the cautious enumeration in which one enumerates the axioms of $\ZFC+I$ until a stage at which $f(n)$ halts, at which point the enumeration also halts. This is sensible, since the halting of $f(n)$ is a sensible reason not to trust $\ZFC+I$, since the assertion that it was consistent led to false statements.

Meanwhile, I claim these theories are incomparable in consistency strength, and strongly independent. For any pair $n\neq m$, there is a model $M\satisfies\ZFC+I+\Con(\ZFC+I)$ in which $f(n)$ halts but $f(m)$ does not, and provably so. In such a model, the theory $(\ZFC+I)^{(n)}$ is a finite fragment of $\ZFC+I$, whereas $(\ZFC+I)^{(m)}$ is provably the full theory $\ZFC+I$. By the same method as earlier, therefore, we can find a model of $\ZFC+I$ in which $\Con((\ZFC+I)^{(n)})$ holds, but $\Con((\ZFC+I)^{(m)})$ does not. So the two theories have incomparable consistency strength, even over $\ZFC+I$. Similar reasoning shows that these theories are strongly independent.
\end{proof}

A similar analysis applies to the axioms asserting more than one inaccessible cardinal. Let $I_n$ be the assertion that there are $n$ inaccessible cardinals, where $n$ is any natural number, and more generally, let $I_\alpha$ be the assertion that there are at least $\alpha$ many inaccessible cardinals, in their natural order, for any ordinal $\alpha$. Consider the cautious assertion of infinitely many inaccessible cardinals, namely the axiom $I_{\omega^\circ}$ asserting that there are $n$ inaccessible cardinals for every natural number $n$ up to the size of the smallest proof of a contradiction, if any, in the theory $\ZFC+I_\omega$, which asserts that there are infinitely many inaccessible cardinals. Note that the assertion $I_{\omega^\circ}$ is a single axiom, in contrast with the cautious enumerations of theories we considered earlier.

\begin{theorem}
 The cautious assertion of infinitely many inaccessible cardinals is strictly stronger than any particular finite number of inaccessible cardinals and strictly weaker than infinitely many.
 $$I<I_2<I_3<\quad\cdots\quad<I_{\omega^\circ}<I_\omega$$
\end{theorem}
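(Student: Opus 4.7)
The plan is to establish the sandwich $I_n < I_{\omega^\circ} < I_\omega$ by handling the two bounds on $I_{\omega^\circ}$ separately; the leftmost chain $I < I_2 < I_3 < \cdots$ among the finite assertions is standard, since $\ZFC + I_{n+1}$ proves $\Con(\ZFC + I_n)$ by cutting to $V_\kappa$ at the $(n+1)$-th inaccessible. Throughout, I would work under the background assumption $\Con(\ZFC + I_\omega)$.

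For the upper bound $I_{\omega^\circ} \leq I_\omega$ I would simply observe that $\ZFC + I_\omega \vdash I_{\omega^\circ}$. In any model of $I_\omega$, whether or not the model internally sees a proof of a contradiction from $\ZFC + I_\omega$ with some (necessarily nonstandard) code $p$, the axiom $I_{\omega^\circ}$ only demands $I_m$ for $m \leq p$---or for every $m$ if no such $p$ exists---and either demand is delivered outright by $I_\omega$.

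For the lower bound $I_n < I_{\omega^\circ}$ at any fixed standard $n$, the key observation is that \PA\ verifies case-by-case that no particular code below $n+1$ encodes a proof of a contradiction from $\ZFC + I_\omega$. Hence in any model of $\ZFC + I_{\omega^\circ}$, the least such code $p$ (if any) must be at least $n+1$, so $I_{\omega^\circ}$ delivers $I_{n+1}$ and therefore $\Con(\ZFC + I_n)$. That is, $\ZFC + I_{\omega^\circ} \vdash \Con(\ZFC + I_n)$, which gives strict $I_n < I_{\omega^\circ}$ by the standard sufficient condition noted after Definition~\ref{Definition.Consistency-hierarchy}.

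The main step is the strictness $I_{\omega^\circ} < I_\omega$, where I would exhibit a single model $N \models \ZFC + \Con(\ZFC + I_{\omega^\circ}) + \neg\Con(\ZFC + I_\omega)$. By incompleteness applied to $\Con(\ZFC + I_\omega)$, there is a model $N \models \ZFC + I_\omega + \neg\Con(\ZFC + I_\omega)$. Inside $N$, let $p$ be the smallest code of a proof of a contradiction from $\ZFC + I_\omega$, necessarily nonstandard. Because $N \models I_\omega$ there are at least $p$ inaccessibles in $N$; let $\kappa$ be the $(p+1)$-th such one. Then $V_\kappa^N \models \ZFC$ and contains exactly the $p$ inaccessibles lying below $\kappa$. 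The crucial point, and where I expect the main obstacle, is verifying that the axiom $I_{\omega^\circ}$ holds in $V_\kappa^N$: since $V_\kappa^N$ and $N$ share the same arithmetic, the minimum proof-code witnessing $\neg\Con(\ZFC + I_\omega)$ inside $V_\kappa^N$ is still the very same nonstandard $p$, and the $p$ inaccessibles of $V_\kappa^N$ exactly deliver the required $I_m$ for $m \leq p$. Thus $V_\kappa^N \models \ZFC + I_{\omega^\circ}$, so $N \models \Con(\ZFC + I_{\omega^\circ})$, while $N$ already refutes $\Con(\ZFC + I_\omega)$ by construction.
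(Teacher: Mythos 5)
Your proposal is correct and follows essentially the same route as the paper: the lower bounds come from $\ZFC+I_{\omega^\circ}$ proving each $I_{n+1}$ (via $\Sigma_1$-completeness for the true statement that no small code proves a contradiction), and the upper strictness comes from a model of $\ZFC+I_\omega+\neg\Con(\ZFC+I_\omega)$ in which the cautious axiom is seen to be consistent. Your explicit cut $V_\kappa^N$ at the $(p+1)$-th inaccessible is just the internal justification the paper leaves implicit when it passes from $I_{n+1}$ to $\Con(\ZFC+I_n)$ to $\Con(\ZFC+I_{\omega^\circ})$, and it cleanly handles the needed agreement of arithmetic.
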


\begin{proof}
We have assumed that it is consistent with \ZFC\ for there to be infinitely many inaccessible cardinals, and so the theory $\ZFC+I_{\omega^\circ}$ will prove every particular $I_n$ as a theorem. So the consistency strength of $I_{\omega^\circ}$ will strictly exceed that of any particular $I_n$. On the other hand, there is a model $M$ satisfying $\ZFC+I_\omega+\neg\Con(\ZFC+I_\omega)$. This model will think that $I_{\omega^\circ}$ stops the assertions at some (nonstandard) $n$, the stage at which $\ZFC$ proves a contradiction from the theory $\ZFC+I_\omega$. The model thinks $I_{n+1}$ is true, and hence that $\Con(\ZFC+I_n)$ and consequently that $\Con(\ZFC+I_{\omega^\circ})$ is true. But since it thinks $\neg\Con(\ZFC+I_\omega)$, this shows the cautious axiom is strictly weaker in consistency strength.
\end{proof}

A similar idea will now enable us to exhibit a similar natural family of strongly independent axioms, incomparable in consistency strength. Let $f$ be the universal computable function, defined relative to the theory $\ZFC+I_\omega$. Let $I_{\omega^\circ}^{(k)}$ be the axiom asserting that the number of inaccessible cardinals is at least the running time of $f(k)$. If $f(k)$ halts, after all, this is a reason to distrust the theory $\ZFC+I_\omega$, which asserts infinitely many inaccessible cardinals, since it is an instance where this theory proved something observably false, and so in this case $I_{\omega^\circ}^{(k)}$ retreats to assert only some finitely many inaccessible cardinals. But just as proved in theorem \ref{Theorem.Incomparability-running-time}, it will turn out that these particular reasons for limiting the full axiom $I_\omega$ are strongly independent over $\ZFC+I_\omega$. Each axiom $I_{\omega^\circ}^{(k)}$ is being cautious about $I_\omega$ in a different, independent manner; but every one of them is quite reasonable.

\begin{theorem}
The cautious inaccessibility assertions $I_{\omega^\circ}^{(k)}$ are pairwise incomparable in consistency strength over \ZFC\ and indeed strongly independent---their consistency statements freely generate the countable free Boolean algebra. Each of these axioms is strictly stronger than every $I_n$, but strictly weaker than $I_\omega$.
\end{theorem}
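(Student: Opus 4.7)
The plan is to adapt the universal-algorithm technique of Theorem \ref{Theorem.Incomparability-running-time} to the inaccessible-cardinal setting. Let $f$ be the universal computable function of Theorem \ref{Theorem.Universal-computable-function} defined relative to $T=\ZFC+I_\omega$, so that $f$ never halts in the standard model, yet for any desired finite halting pattern $P$ on given indices there is a model $M\models T$ in which $f$ realizes $P$, halting exactly on the indices $P$ marks as halting.

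For the bounds, $I_\omega\vdash I_{\omega^\circ}^{(k)}$ is immediate. For the lower bound over each $I_n$, note that for any standard $n$, the statement ``$f(k)$ does not halt within $n$ steps'' is a decidable arithmetic fact true in the standard model, hence $\ZFC$-provable; therefore $\ZFC+I_{\omega^\circ}^{(k)}$ proves that the running time of $f(k)$ exceeds $n$ and hence that there are more than $n$ inaccessibles, and cutting at $V_\kappa$ for $\kappa$ the $(n+1)$st inaccessible witnesses $\Con(\ZFC+I_n)$. In particular $\ZFC+I_{\omega^\circ}^{(k)}$ proves $I_n$, yielding the $\ZFC$-provable implication $\Con(\ZFC+I_{\omega^\circ}^{(k)})\to\Con(\ZFC+I_n)$; strictness then follows from the second incompleteness theorem. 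Strictness in the upper bound will fall out of the construction below.

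For pairwise incomparability and strong independence, fix any finite index set $\{k_0,\dots,k_r\}$ together with any halting pattern $P$ assigning to each index a prescription to halt or not to halt. By the universality of $f$, choose $M\models T$ realizing $P$, and let $t\in\N^M$ be a common upper bound for the running times of the indices $P$ marks as halting. Cut $M$ at the $(t+1)$st inaccessible: the resulting $V_\kappa^M$ models $\ZFC$ together with the concrete halting-pattern data for $f$ on these indices and the axiom ``exactly $t+1$ inaccessibles''. Since this finite theory cannot prove its own consistency, $M$ contains a further model $U\models\ZFC$ deeming $t$ inaccessibles consistent with $\ZFC$ but $t+1$ inconsistent. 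Because $\N^U$ end-extends $\N^M$, the pattern $P$ persists in $U$: for each $k_i$ marked as halting, $I_{\omega^\circ}^{(k_i)}$ requires only $\leq t$ inaccessibles, so $U\models\Con(\ZFC+I_{\omega^\circ}^{(k_i)})$; for each $k_j$ marked as non-halting, $I_{\omega^\circ}^{(k_j)}$ demands infinitely many, hence at least $t+1$, inaccessibles, so $U\models\neg\Con(\ZFC+I_{\omega^\circ}^{(k_j)})$. Thus every nontrivial Boolean combination of the consistency statements is realizable in some model of $\ZFC$, establishing strong independence, with pairwise incomparability as the special case of two indices; and taking $P$ that marks $k$ alone as halting delivers the strict inequality $\Con(\ZFC+I_{\omega^\circ}^{(k)})<\Con(\ZFC+I_\omega)$.

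The main technical point will be coordinating the cutoff parameter $t$: it must dominate all realized halting times for indices marked as halting in $P$, yet be small enough as viewed from $U$ that the infinite-inaccessible demands of the non-halting indices become inconsistent. The end-extension $\N^M\of\N^U$ is what preserves the prescribed halting pattern faithfully from $M$ into $U$.
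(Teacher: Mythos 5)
Your argument is correct in substance and stays within the paper's toolkit, but it takes a longer route than the paper's own proof of this particular theorem. You transplant the two-model argument of theorem \ref{Theorem.Incomparability-running-time}: realize the halting pattern in some $M\satisfies\ZFC+I_\omega$ via theorem \ref{Theorem.Universal-computable-function}, cut at an inaccessible to get a consistent theory with boundedly many inaccessibles plus the halting data, and then pass inside $M$ to a further model $U$ of that theory plus its own inconsistency, using the end-extension $\N^M\of\N^U$ to preserve the pattern. The paper instead exploits the fact that $f$ is defined relative to $\ZFC+I_\omega$ itself: in any model $M$ in which $f(k)$ halts, that theory has proved a statement about $f$'s behavior which $M$ sees (\PA-provably) to be false, so $M$ already satisfies $\neg\Con(\ZFC+I_\omega)$; and since the non-halting of $f(r)$ is provable in $M$, the axiom $I_{\omega^\circ}^{(r)}$ is there provably equivalent to $I_\omega$. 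Thus $M$ itself is the separating model, satisfying $\Con(\ZFC+I_{\omega^\circ}^{(k)})$ (by cutting, as $M$ has infinitely many inaccessibles) and $\neg\Con(\ZFC+I_{\omega^\circ}^{(r)})$, with no inner model $U$ needed. Your route buys a bit of robustness (it does not rely on $f$ being keyed to $\ZFC+I_\omega$ for the $\neg\Con$ part) at the cost of an extra step, and you spell out the bounds over each $I_n$ explicitly, which the paper leaves to the argument of the preceding theorem; that part of your write-up is exactly right.

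One spot to tighten: from $U\satisfies\neg\Con(S')$, where $S'$ is $\ZFC$ plus the halting data plus ``exactly $t+1$ inaccessibles,'' your intermediate claim that $U$ deems $t+1$ inaccessibles inconsistent with $\ZFC$ is not immediate, since $\ZFC+I_{t+1}$ does not obviously prove $\Con(S')$ (one cannot cut ``at least $t+1$'' down to ``exactly $t+1$'' without an inaccessible above). What you actually need, and what does follow, is $U\satisfies\neg\Con(\ZFC+I_{\omega^\circ}^{(k_j)})$ for the non-halting indices: inside $U$ the halting data is $\ZFC$-provable (by formalized $\Sigma_1$-completeness for the halting facts, and upward persistence along $\N^M\of\N^U$ of $M$'s proofs of non-halting), so $\ZFC+I_{\omega^\circ}^{(k_j)}$ proves there are infinitely many inaccessibles and hence proves $\Con(S')$ by cutting at the $(t+2)$nd inaccessible, while $\ZFC$ proves $\neg\Con(S')$ from $U$'s inconsistency proof; so this theory is inconsistent in $U$'s view. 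Alternatively, formulate $S'$ with ``at least $t+1$ inaccessibles,'' which makes all the needed implications monotone and immediate. This is the same level of telegraphing present in the paper's own proof of theorem \ref{Theorem.Incomparability-running-time}, so I count it as a repairable imprecision rather than a gap.
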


\begin{proof}
If $k\neq r$, then there is a model of $\ZFC+I_\omega$ in which $f(k)$ is defined and $f(r)$ is not, and the model thinks that \ZFC\ can prove this. The model must also think $\neg\Con(\ZFC+I_\omega)$, and so this is a model in which $\ZFC+I_{\omega^\circ}^{(k)}$ is thought to be consistent and $\ZFC+I_{\omega^\circ}^{(r)}$ is not. A similar argument works with any finite pattern for the consistency of the theories $I_{\omega^\circ}^{(k)}$, and so these axioms are strongly independent in consistency strength.
\end{proof}

Another way to describe the cautious theories $I_{\omega^\circ}^{(k)}$ is that they each assert that there are at least $n$ inaccessible cardinals for every $n$ up to a certain kind of observed violation of $\Pi_1$ soundness for the theory $\ZFC+I_\omega$, if any, a case where this theory proves a certain $\Pi^0_1$ assertion about $k$ for which a counterexample also is observed. The point is that the halting of $f(k)$ is just like this: a proof is found that $f$ doesn't have a certain behavior that it is also observed immediately to exhibit. Would you trust a theory that proved a $\Pi^0_1$ statement for which you are observing a counterexample? I think you would find it natural not to trust such a theory, and in this sense, the cautious axioms $I_{\omega^\circ}^{(k)}$ are natural.

One might have been tempted to try to formulate a cautious version of $I_\omega$ by asserting that there are as many inaccessible cardinals as are consistent, up to $\omega$. That is, the assertion that for every $n$, if $\ZFC+I_n$ is consistent, then $I_n$. This statement, however has no consistency strength at all---it is equiconsistent with \ZFC---because it is equiconsistent with \ZFC\ that \ZFC\ itself is inconsistent, which trivializes the stated axiom. The cautious formulation $I_{\omega^\circ}$, in contrast, strictly exceeds every particular $I_n$.

Generalizing the previous temptation, let us define the \emph{provisional} assertion of any sentence $A$ to be the assertion $\Con(\ZFC+A)\to A$. This sentence asserts that $A$ is true, provided that it is consistent. But again, it has no consistency strength at all over \ZFC, because it is relatively consistent with \ZFC\ that $\neg\Con(\ZFC)$, which trivializes the provisional assertion of $A$.

We also have the relatively cautious theories, as before, such as the theory asserting that there are as many inaccessible cardinals as the size of the smallest proof that there is no measurable cardinal, or infinitely many if there is no such proof. That is, we assert more and more inaccessible cardinals, up to the size of the smallest proof that measurable cardinals are inconsistent. The key observation is that as we strengthen measurable to strong and supercompact, and so on, then the consistency strength of the relatively cautious theories descend, providing natural instances of illfoundedness in the hierarchy.

One might also consider a dual to the cautious enumerations. Namely, in the \emph{petulant enumeration} $\ZFC^\bullet$, we enumerate the axioms of \ZFC, but if we should ever find an explicit proof of contradiction in our favored very strong consistent theory $T$, then in a petulant rage we immediately add a contradiction also to the current enumeration, thereby spoiling $\ZFC^\bullet$. (The general idea is proposed in \cite[theorem~7.6]{Feferman1960:Arithmetization-of-metamathematics-in-a-general-setting}, attributed to Steven Orey.) This enumeration is arguably unnatural, but it is not beyond imagination---we might consider a large cardinal set theorist who truly and sincerely believes in the hierarchy of theories from \ZFC\ up through inaccessible cardinals, measurable cardinals, supercompact cardinals and so on, but is so vested in their success that for an unexpected contradiction to arise higher up would undermine the whole picture in a way that was too upsetting to contemplate. Such is the personality behind the petulant enumeration.

In any case, since we have assumed that the strong theory $T$ is actually consistent, the petulant reaction will never come, and so the petulant enumeration results in exactly the usual theory \ZFC. The difference between $\ZFC$ and $\ZFC^\bullet$ is in this sense merely theoretical and can only be realized in nonstandard models and even then only with nonstandard instances of the axioms. Nevertheless, the petulant theory is much stronger than \ZFC\ in consistency strength, for it is equiconsistent with $T$. The reason is that if $\Con(\ZFC^\bullet)$, then the enumeration must never have reached the petulant stage, and so $\Con(T)$ as well. And conversely, from the consistency of $T$ we know both that $\ZFC$ is consistent and that $\ZFC^\bullet$ agrees with \ZFC. So the petulant theory $\ZFC^\bullet$ is equiconsistent with the much stronger theory $T$.

Let me next consider whether there is a double-jumping large cardinal hypothesis. All the usual large cardinal hypotheses are single-jumping, in the sense that they each have a strong consistency strength, but the negations of the hypotheses do not. And yet, we know by theorem \ref{Theorem.eta-neg-eta-incomparable} that there are set-theoretic axioms with the double-jumping feature, so that both the axiom and its negation have strictly higher consistency strength. Is there a double-jumping large cardinal axiom? Is there a large cardinal axiom $A$, so that both $A$ and $\neg A$ have a consistency strength that is strictly stronger than \ZFC? Consider the following tentative large cardinal assertions:

\begin{theorem}
Amongst the assertions of the form:
 \begin{quote}
  ``If this specific computational process halts, then there is an inaccessible cardinal''
 \end{quote}
there are instances with double-jumping consistent strengths---the sentence and its negation have incomparable consistency strength over \ZFC.

There is computable function $f$ for which the assertions
 \begin{quote}
 ``if $f(n)$ halts, then there is an inaccessible cardinal''
 \end{quote}
have strongly independent incomparable consistency strengths, and each statement is double jumping in consistency stregnth.
\end{theorem}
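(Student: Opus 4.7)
The plan is to adapt the universal-algorithm construction of Theorem~\ref{Theorem.Universal-computable-function} to the conditional form, combining it with \Godel's incompleteness to separate the consistency strengths of $A_n$ and $\neg A_n$. Let $f$ be the universal computable function defined relative to $T = \ZFC + I$, where $I$ is the assertion that an inaccessible cardinal exists, and set $A_n$ to be the statement ``$f(n)$ halts $\to I$''.

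For double-jumping of each $A_n$, I want to exhibit two models of $\ZFC$ witnessing the incomparability of $\Con(\ZFC + A_n)$ and $\Con(\ZFC + \neg A_n)$. For the first, observe that $\neg \Con(\ZFC + \neg A_n)$ is equivalent to $\text{Prov}_{\ZFC}(A_n)$, so I seek a model $M_1$ of $\ZFC$ in which $\ZFC$ nonstandardly proves $A_n$. Take $M_1 \models \ZFC + \Con(\ZFC) + \neg \Con(\ZFC + f(n) \text{ halts})$, which exists by \Godel\ provided $\ZFC + \Con(\ZFC)$ does not decide the genuinely stronger consistency claim $\Con(\ZFC + f(n) \text{ halts})$. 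In $M_1$, $\ZFC$ proves $f(n)$ diverges, hence vacuously proves $A_n$, so $\neg \Con(\ZFC + \neg A_n)$ holds; meanwhile $\Con(\ZFC + A_n)$ follows from $\Con(\ZFC)$ and the provability of $A_n$. For the reverse direction, combine Theorem~\ref{Theorem.Universal-computable-function} with \Godel\ to obtain $N \models T + \neg \Con(T) + f(n) \text{ halts}$, and cut down to $M_2 = V_\kappa^N$ for $\kappa$ the least inaccessible of $N$. Since $V_\kappa^N$ shares the arithmetic of $N$, we have $M_2 \models \ZFC + \neg I + \neg \Con(\ZFC + I) + f(n) \text{ halts}$, so $\text{Prov}_{\ZFC}(\neg I)$ and $\text{Prov}_{\ZFC}(f(n) \text{ halts})$ together yield $\text{Prov}_{\ZFC}(\neg A_n)$, i.e., $\neg \Con(\ZFC + A_n)$. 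Meanwhile $\Con(\ZFC + \neg A_n)$ follows from $M_2 \models \Con(\ZFC)$ (inherited from $N \models T \vdash \Con(\ZFC)$ via the standard $V_\lambda$-is-a-model argument at inaccessible $\lambda$) together with the provability of $\neg A_n$.

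For the strong-independence claim, the same ingredients scale: for any Boolean combination of $\Con(\ZFC + A_{n_1}), \ldots, \Con(\ZFC + A_{n_k})$ and their negations, the universal algorithm permits us to prescribe any desired finite halting pattern of $f$ on $n_1, \ldots, n_k$ within a single model of $T$, while independent \Godel{}ian toggles around $\Con(\ZFC + I)$, combined with suitable $V_\kappa$ cut-downs, realize the desired pattern of $\text{Prov}_{\ZFC}(\neg A_{n_i})$. This mirrors the strong-independence argument of Theorem~\ref{Theorem.Incomparability-e-halts} with an added $I$-toggling layer supplying the double-jumping feature. The main technical obstacle is verifying that the $V_\kappa^N$ cut-down preserves all relevant arithmetic and provability facts---which follows from $\kappa > \omega$ so that $V_\kappa^N$ shares its arithmetic with $N$---and coordinating the various \Godel-style self-referential claims to hold simultaneously, a task that elaborates the standard universal-algorithm and \Godel{}ian machinery established in the preceding sections.
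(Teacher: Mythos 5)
Your overall strategy is the same as the paper's: take the universal computable function $f$ relative to a theory extending $\ZFC+I$ (the paper uses $\ZFC+I+\Con(\ZFC+I)$, an inessential difference) and read off the consistency pattern of the conditional statements inside models where the halting behaviour of $f$ is controlled and $\neg\Con(\ZFC+I)$ holds. Your model $M_2$ is essentially the paper's model $N$, and your reading of it is correct; the cut-down to $V_\kappa^N$ is harmless but unnecessary, since every fact you extract from it is arithmetic and already holds in $N$. (Your route to $N$ itself---G\"odel applied to $T+f(n)\text{ halts}$---does work, because the model verifies the halting of $f(n)$, hence thinks $T$ proves it, which together with $\neg\Con(T+f(n)\text{ halts})$ yields $\neg\Con(T)$; this small step deserves to be said.)

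The genuine gap is at $M_1$. You posit a model of $\ZFC+\Con(\ZFC)+\neg\Con(\ZFC+f(n)\text{ halts})$ ``by G\"odel, provided $\ZFC+\Con(\ZFC)$ does not decide the genuinely stronger consistency claim.'' That proviso is not something the incompleteness theorem hands you, and calling the claim ``genuinely stronger'' begs the question: for an arbitrary program the proviso is simply false (if $f(n)$ provably halts, then $\Con(\ZFC+f(n)\text{ halts})$ is provably equivalent to $\Con(\ZFC)$ and so is decided), so the required unprovability is a special property of the universal $f$ and must be argued, which is precisely where the universal-algorithm machinery has to be invoked a second time. The repair is the paper's own move: by theorem \ref{Theorem.Universal-computable-function} there is a model of $\ZFC+I$ in which $f$ halts on some other input $m\neq n$ and only there, provably so; in that model $\Con(\ZFC)$ holds (from $I$), and $\ZFC$ proves $f(n)$ diverges, hence proves $A_n$, which gives exactly your $M_1$ conclusions $\Con(\ZFC+A_n)$ and $\neg\Con(\ZFC+\neg A_n)$. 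Relatedly, in the strong-independence sketch the phrase ``independent G\"odelian toggles around $\Con(\ZFC+I)$'' does not describe an available mechanism, since in any single model $\Con(\ZFC+I)$ has just one truth value; what realizes an arbitrary pattern is the choice of which of $n_1,\dots,n_k$ the function halts on inside one model of $\neg\Con(\ZFC+I)$---there $\Con(\ZFC+A_{n_i})$ holds exactly at the provably divergent inputs---with the all-consistent pattern witnessed by any model of $\Con(\ZFC+I)$. With these corrections your argument coincides with the paper's.
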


\begin{proof}
Let $f$ be the universal computable function defined relative to the theory $\ZFC+I+\Con(\ZFC+I)$, and let $\psi_n$ be the statement, ``if $f(n)$ halts, then there is an inaccessible cardinal.'' If $n\neq m$, then there is a model $M$ of $\ZFC+I+\Con(\ZFC+I)$ in which $f(n)$ halts and $f(m)$ does not, provably so by \ZFC\ in the model. Because $M$ thinks $\ZFC+I$ is consistent, it also thinks there is a model $N$ of $\ZFC+I+\neg\Con(\ZFC+I)$. The computation of $f$ on $n$ and $m$ is the same in $M$ and $N$, since it had already found the key decision step at a finite stage in $M$, whose natural numbers are an initial segment of those in $N$. Because $f(n)$ halts in $N$, the statement $\psi_n$ is equivalent to $I$, but $N$ thinks $\neg\Con(\ZFC+I)$, so this is a model of $\neg\Con(\ZFC+\psi_n)$. But because $f(m)$ does not halt, the statement $\psi_m$ is vacuously true, and provably so. So the consistency of $\psi_m$ amounts to $\Con(\ZFC)$, which holds as a consequence of $I$ in $N$. So the statements are pairwise incomparable. By controlling any finite pattern of halting in $f$, we can similarly see that the statements are strongly independent.

To see that the statements $\psi_n$ are all double-jumping, consider the model $N$ as above, which satisfied $\neg\Con(\ZFC+\psi_n)$, but in light of $I$, it satisfies $\Con(\ZFC+\neg I)$ and consequently $\Con(\ZFC+\neg\psi_n)$, since $f(n)$ does halt there. Conversely, if we consider $\psi_m$ in the same model, we've already observed $\Con(\ZFC+\psi_m)$ in $N$, and $\neg\Con(\ZFC+\neg\psi_m)$ holds there, since $f(m)$ provably does not halt. If we make this same analysis with $n$ instead of $m$, we see that $\psi_n$ and $\neg\psi_n$ are incomparable in consistency strength, and so $\psi_n$ is double-jumping.
\end{proof}

We could have used essentially any strong statement in the conclusion, instead of asserting the existence of inaccessible cardinals; for example, we could have said that if the computation halted, then there is a strong cardinal or a proper class of Woodin cardinals or what have you. In addition, essentially the same argument works with other forms of existential hypotheses. For example, statements of the form
\begin{quote}
 ``If this specific diophantine equation $p(\vec x)=0$ has a solution in the integers, then there is a supercompact cardinal''
\end{quote}
or the form
\begin{quote}
 ``If this specific set of polygonal tiles admits no tiling of the plane, then there is an almost huge cardinal''
\end{quote}
will also respectively exhibit instances of double-jumping and strongly independent incomparability in consistency strength. Are these large cardinal axioms? They do seem to make at least provisional large cardinal assertions, and in this sense they might be regarded as large cardinal axioms. Such a provisional nature about consistency, I claim, is a necessary feature of any double-jumping hypothesis, because if a statement provably implies $\Con(ZFC)$, then it cannot be double-jumping, since the negation of the statement would follow from $\neg\Con(\ZFC)$, which has no consistency strength at all. If a statement is to be double-jumping, therefore, then it must be provisional in this way about consistency.

\section{Nonlinearity in the hierarchy of transitive-model existence}

Let us consider an analogue of the hierarchy of consistency strength, but where one requires the existence of transitive models of the theory, rather than mere consistency. In the hierarchy of \emph{transitive-model existence} strength, we say that theories are related $S\leq T$ if we can prove in the base theory \ZFC\ that the existence of a transitive model of $T$ implies the existence of a transitive model of $S$.

It turns out that many of the nonlinearity results carry over almost unchanged to this revised hierarchy.\goodbreak

\begin{theorem}
 There is a computable function $f$ for which the assertions
 \begin{quote}
   ``there are $f(n)$ inaccessible cardinals''
 \end{quote}
 are incomparable and strongly independent in the hierarchy of transitive-model existence.
\end{theorem}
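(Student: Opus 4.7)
The plan is to adapt the proof of Theorem~\ref{Theorem.Incomparability-f(n)-inacc} nearly verbatim, substituting transitive-model existence for mere consistency, and using a universal computable function defined relative to a base theory that itself asserts transitive-model existence. First I would let $T^{*} = \ZFC + I_\omega + $ ``there is a transitive model of $\ZFC + I_\omega$,'' consistent under the standing large-cardinal assumptions, and let $f$ be the universal computable function of Theorem~\ref{Theorem.Universal-computable-function} relative to $T^{*}$, with $\sigma_n$ denoting the sentence ``there are $f(n)$ inaccessible cardinals.''

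For incomparability of $\sigma_n$ and $\sigma_m$, I would apply the universality property to obtain $M \models T^{*}$ with $f(n) < f(m)$ both defined, provably so. Since $M$ has a transitive model of $\ZFC + I_\omega$, it has one of $\ZFC + I_{f(m)}$ by cutting at the $(f(m)+1)$th inaccessible inside such a model. Let $W \in M$ be a transitive model of $\ZFC + I_{f(m)}$ of \emph{minimal} ordinal height in $M$. Because $W$ is transitive, $\omega^{W} = \omega^{M}$, so $f^{W}(n) = f^{M}(n)$ and $f^{W}(m) = f^{M}(m)$ and the sentences $\sigma_n, \sigma_m$ are unambiguously interpreted. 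Cutting $W$ at its $(f(n) + 1)$th inaccessible yields a transitive model of $\sigma_n$ inside $W$. Meanwhile, by the minimality of $W$ in $M$, no transitive $P \in W$ can satisfy $\ZFC + I_{f(m)}$, for such a $P$ would be a transitive model of $\sigma_m$ of strictly smaller ordinal height in $M$'s view, contradicting minimality. Thus $W$ witnesses that a transitive model of $\sigma_n$ exists but none of $\sigma_m$; the symmetric construction starting from $M'$ with $f(m) < f(n)$ yields the reverse direction.

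For strong independence, I would generalize via a threshold construction. Given a Boolean combination prescribing that a transitive model of $\sigma_{n_i}$ exists for $i$ in a set $A$ and does not exist for $i$ in a set $B$, I would choose distinct standard naturals $v_i$ with $v_i < v^{*}$ for $i \in A$ and $v_i \geq v^{*}$ for $i \in B$, apply the universality property to obtain $M \models T^{*}$ with $f(n_i) = v_i$, and take $W \in M$ to be a minimal transitive model of $\ZFC + I_{v^{*}}$. Cutting inside $W$ produces transitive models of $\sigma_{n_i}$ for $i \in A$; any transitive $P \in W$ satisfying $\sigma_{n_j}$ for $j \in B$ (so satisfying $I_{v_j}$ with $v_j \geq v^{*}$) would itself be a transitive model of $\ZFC + I_{v^{*}}$, contradicting the minimality of $W$.

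The main obstacle I anticipate is the nonabsoluteness of inaccessibility, whereby a transitive set $P \in W$ can internally believe it has more inaccessibles than $W$ recognizes, so one cannot rule out transitive models of strong theories merely by counting ambient inaccessibles. The minimality argument sidesteps this entirely: it appeals only to the intrinsic, absolute property of $P$'s being a transitive model of $\ZFC + I_{v^{*}}$, so any such $P \in W$ violates the minimality of $W$ in $M$ irrespective of whether $P$'s internal ``inaccessibles'' are genuinely inaccessible in $M$.
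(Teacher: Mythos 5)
Your proof is correct and takes essentially the same route as the paper: the same universal computable function, with the incompleteness step of the consistency-strength argument replaced by a least-witness argument among transitive models inside $M$ — the paper uses an $\in$-minimal transitive model of \ZFC\ having a transitive model of the weaker assertion, while you use a minimal-height transitive model of $\ZFC+I_{f(m)}$, which is the same idea, and your handling of the nonabsoluteness of inaccessibility via minimality matches the paper's. The only cosmetic difference is that your base theory $T^{*}$ needlessly adds ``there is a transitive model of $\ZFC+I_\omega$''; cutting $M$ itself at its own inaccessibles already yields the required transitive models of $\ZFC+I_{f(m)}$, so plain $\ZFC+I_\omega$ suffices as in the paper.
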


\begin{proof}
We use the same function $f$ as in theorem \ref{Theorem.Incomparability-f(n)-inacc}. If $n\neq m$, then as before there is a model $M$ with infinitely many inaccessible cardinals in which $f(n)<f(m)$, and both are defined. Inside this model $M$, let $N$ be any $\in$-minimal transitive model of \ZFC\ that has a transitive model with $f(n)$ many inaccessible cardinals. It follows by minimality that this model can have no transitive model with $f(n)+1$ many inaccessible cardinals, and so $N$ is a model that thinks ``there are $f(n)$ inaccessible cardinals'' has a transitive model, but ``there are $f(m)$ inaccessible cardinals'' does not. Therefore the transitive model existence statement for $n$ does not imply the statement for $m$, and so these statements are all pairwise incomparable in that hierarchy. By considering finite patterns with the universal computable function, we similarly achieve that these statements are strongly independent.
\end{proof}

\section{Nonlinearity in the largest-number contest}

Allow me briefly to digress with a discussion of how some of these issues play out with the \emph{largest-number} contest, in which contestants compete to describe the largest number subject to certain constraints of space and language; they write their submissions on a standard index card, with rules specifying which characters are allowed and how many. Perhaps one contestant fills their card naively with {\tt 9}s in decimal notation, while another describes a much larger number with factorials {\tt 9!!!!!!!!} or with iterated exponentials {\tt 2\^{}2\^{}2\^{}2\^{}2\^{}2}. On even a moderately sized card, we can describe some truly large numbers.

Although I did once supervise and judge an actual instance of the game with competitors from the audience of a large public lecture I gave in Shanghai \cite{Hamkins.blog2013:Largest-number-contest}, nevertheless the largest-number contest is more often played in the imagination---it is more thought experiment than actual game. The reason is that the game leads quickly to difficult metamathematical matters, which begin to arise when one allows number descriptions going beyond mere primitive recursive terms. For starters, if one allows a free-form descriptive language, then one will immediately engage with Berry's paradox in submissions such as, ``the largest number that can be described on $3\times 5$ index card, plus $1$,'' a description that itself fits easily on a $3\times 5$ index card, but by doing so serves up a paradox.

One might hope to avoid paradox by restricting the language, allowing only precise definitions in a formal language, say, such as those of the form ``the smallest number $n$ such that $\varphi(n)$,'' where $\varphi$ is a formula in the first-order language of arithmetic; or perhaps one allows submissions of the form, ``the output of this specific Turing machine computation.'' These kinds of submission might initially seem tame, but how are we to know whether indeed there is such a number $n$ for which $\varphi(n)$ or whether the submitted computational procedure will ever halt? Perhaps we should insist that submissions be accompanied by a proof that the submission does indeed define a number. For example, such a proof is easy to provide for definitions of the form, ``the smallest number $n$ such that $\varphi(n)$, if any, otherwise $17$.''

And yet, the judge will have difficulty to decide who has won. Even in the case where the language is restricted to primitive recursive terms of the form:
  \begin{quote}
  {\tt googol plex bang bang stack}

  or

  \noindent{\tt googol stack bang plex plex},\footnote{Following pop-math practice, a \emph{googol} is $10^{100}$; the expression $x$ {\tt bang} refers to the factorial $x!$; the expression $x$ {\tt plex} means $10^x$; and $x$ {\tt stack} means the $x$-iterated exponential $10^{10^{\cdot^{\cdot^{10}}}}$.}
  \end{quote}
and so on, then it is an open question whether in the general case there is any feasible algorithm to determine the larger number; see \cite{Hamkins.MSE72646:Help-me-put-these-enormous-numbers-in-order}.

There is no computable procedure at all to determine whether submissions of the form ``the running time of this computation'' are legitimate submissions, since the judge would have to solve these instances of the halting problem. The same holds for determining the winner for submissions of the form ``the running time of this computation, if it halts, otherwise $17$,'' for if another player submits $18$, then the judge will have to know whether that computation halts to adjudicate the winner.

Apart from computable undecidability, however, there is the more subtle issue of logical undecidability. We might suppose that the judge is presiding over the contest in the context of a fixed official background theory $T$, a consistent computably axiomatizable theory that she relies upon when adjudicating the comparative sizes of the number descriptions. Perhaps this theory is very strong---it might be \ZFC\ plus an aspirational large cardinal hypothesis, amongst the strongest theories thought to be consistent. The problem, of course, is that no consistent computably axiomatizable theory will settle all the comparisons that might be put to it; there will be concrete number descriptions for which the theory does not provably settle their comparative size, and so the judge will not be able to declare the winner. One player might submit, ``the output of this computation'' and another ``the output of that computation,'' but just as the theory does not settle the values of the universal computation, it will not be able to settle who has won.

A determined game-player might argue that we want the judge not to use a theory at all, but rather to use \emph{actual arithmetic truth}, truth in the standard model $\N$. That is, we want to declare as the winner the number description that actually gives rise to the largest number, when this description is interpreted in the standard model $\N$. In \ZFC\ we can prove that there is a definite truth predicate on the standard model $\N$, which can be used to adjudicate these comparisons.

But this proposal is less helpful than one might expect. What good is using a complete truth predicate, after all, if we don't know what the predicate's truth judgements are? Although \ZFC\ and our other foundational theories prove that there is a definite arithmetic truth predicate, these theories do not tell us fully which arithmetic statements are true. So this proposal does not seem to enable the judge to determine the winner. Faced with two number descriptions, even if the judge might know that one of them describes the actually larger number, she may have no way to find out which one.

\enlargethispage{20pt}
We can summarize the basic facts in the following theorem.

\begin{theorem}
 For any given consistent theory $T$, there are entries for the largest-number contest of the form, ``the first $n$ with this concrete property $\varphi(n)$,'' for which it is undecidable in $T$ which player has won, and furthermore, for which the respective assertions that a given player has won have incomparable consistency strengths.
\end{theorem}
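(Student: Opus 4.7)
The plan is to adapt the universal-function technique from Theorem~\ref{Theorem.Universal-computable-function} to the largest-number contest, along the lines of Theorems~\ref{Theorem.Incomparability-e-halts} and \ref{Theorem.Incomparable-m-complete}. Assuming $T$ extends enough arithmetic for the construction (the paper's standing convention), let $f$ be a universal computable function defined relative to $T + \Con(T)$, so that for any finite partial halting pattern there is a model of $T + \Con(T)$ in which $f$ provably realizes that pattern.

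Fix two distinct inputs $i$ and $j$, and let $\varphi_1(n)$ be the arithmetic predicate
\[
\bigl(f(i)\downarrow \wedge n = 10^{100}\bigr) \vee \bigl(f(i)\uparrow \wedge n = 1\bigr),
\]
with $\varphi_2(n)$ defined analogously using $j$ in place of $i$. Let Player~1 submit the entry ``the first $n$ such that $\varphi_1(n)$,'' and Player~2 the entry ``the first $n$ such that $\varphi_2(n)$.'' In every model of $T$, exactly one disjunct of $\varphi_k(n)$ is satisfied, and for a unique $n$, so each entry describes a genuine natural number. The assertion $W_1$ that Player~1 has won then reduces to $f(i)\downarrow \wedge f(j)\uparrow$, and $W_2$ to the symmetric statement.

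I then apply the argument of Theorem~\ref{Theorem.Incomparability-e-halts}. By the universal property of $f$, there is a model $M_1 \models T + \Con(T)$ in which $f(i)$ halts and $f(j)$ provably does not. In $M_1$, the halting of $f(i)$ is witnessed by a concrete computation, so \PA-provably $f(i)\downarrow$; and the universal construction supplies a $T$-proof inside $M_1$ that $f(j)\uparrow$. Combined with $\Con(T)$ in $M_1$, this yields $M_1 \models \Con(T + W_1) \wedge \neg\Con(T + W_2)$. A symmetric model $M_2$ witnesses the converse, so $W_1$ and $W_2$ are incomparable in consistency strength over the base theory. Undecidability of the winner in $T$ follows immediately, since both $W_1$ and $W_2$ are consistent with $T$ while neither is provable.

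The only real obstacle is cosmetic: ensuring that the entries genuinely fit the required syntactic form ``the first $n$ with concrete property $\varphi(n)$.'' This is handled directly by the $\varphi_k$ above, which are concrete arithmetic predicates built from the fixed Turing-machine program for $f$. All remaining steps are transcriptions of the universal-function arguments already developed earlier in the paper.
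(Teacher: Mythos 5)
Your construction is essentially the argument the paper intends: the theorem is stated there as a summary of the universal-algorithm technique of theorems \ref{Theorem.Universal-computable-function} and \ref{Theorem.Incomparability-e-halts} (no separate proof is given), and packaging the halting behavior of two inputs into entries of the form ``the first $n$ with $\varphi(n)$'' is exactly the intended transcription: the winner assertions reduce to $f(i)\converges\wedge f(j)\diverges$ and its mirror image, and the earlier incomparability argument applies to them verbatim.

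One step does fail in the stated generality, though the repair is routine. You define $f$ relative to $T+\Con(T)$ and then invoke a model $M_1\models T+\Con(T)$ realizing a prescribed halting pattern; but mere consistency of $T$ does not give consistency of $T+\Con(T)$ (take $T=\PA+\neg\Con(\PA)$), and if that theory is inconsistent the flexibility of the universal algorithm is lost and $M_1$ does not exist. Define $f$ instead relative to $\PA+\Con(T)$, which is consistent outright whenever $T$ is consistent, since $\Con(T)$ is then true. The models $M_1\models\PA+\Con(T)$ supplied by the universal property still do the job: inside $M_1$ the halting of $f(i)$ and the divergence of $f(j)$ are \PA-provable, so $M_1\models\Con(T+W_1)\wedge\neg\Con(T+W_2)$ (and symmetrically for $M_2$), giving incomparability over the base theory. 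For the undecidability claim you should also extract the \emph{actual} consistency of $T+W_1$ and $T+W_2$ from these nonstandard models rather than assert it directly: if $T$ really proved $\neg W_1$, that proof is a standard object and hence recognized inside $M_1$, which also thinks $T\proves W_1$, contradicting $M_1\models\Con(T)$; the symmetric argument handles $W_2$, so neither winner assertion is provable or refutable in $T$.
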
\goodbreak

\section{Illusory linearity and the confirmation bias argument}

Let me discuss a few reasons for thinking that we might simply be mistaken about the linearity phenomenon. Perhaps it is illusory?

One observation tending to undermine the linearity evidence is simply the fact that many common large cardinal notions were constructed specifically to strengthen previous notions, often in well-ordered hierarchies. The progressions from inaccessible cardinals to hyperinaccessible and the hyperinaccessibility hierarchy and from Mahlo through the hyper-Mahloness hierarchy and ultimately to greatly Mahlo occurred exactly like this in the early days of set theory. Similarly with the progression from measurable cardinals to hypermeasurability and through the strongness hierarchy to strong cardinals. Solovay generalized the strongly compact to the supercompact cardinals with an embedding characterization that led naturally to many further strengthenings, among them the almost huge, huge, and superhuge cardinals. These conceptions altogether firmly established the paradigm of large cardinals as critical points of embeddings $j:V\to M$, which constitute the large-scale bones of the large cardinal hierarchy. One naturally strengthens these axioms simply by insisting upon progressively stronger closure properties on $M$. These strengthened notions are often therefore linearly related in a way that is completely unsurprising---they form a linear hierarchy precisely because that is how we created them.

Mirna \Dzamonja\ describes the situation like this:
\begin{quote}
The linearity pretty obviously seems to be just a consequence of definitions mostly being variants of each other. Increase the target, increase the closure\ldots clearly once we get to be more inventive we shall have no linearity. \cite{Dzamonja.Twitter2021:Linearity-in-large-cardinals}
\end{quote}

This linear creation process occurs even at the very bottom of the hierarchy of consistency strength, with the tower of iterated consistency assertions, at each transfinite stage adding the consistency statement of the theory that has come before. Of course this creates a well-founded linear tower of consistency strength.

These observations show that huge parts of the consistency strength hierarchy exhibit linearity only for superficial, unsurprising reasons, which therefore cannot count as evidence of a broader or more fundamental linearity phenomenon. The unexpected instances of linearity are simply many fewer than one might have expected. Do those instances suffice to establish linearity as a genuine phenomenon?

The defenders of linearity might reply to this criticism by pointing out that it is not only the large cardinal notions themselves that are linearly ordered by consistency strength, but all the other statements in mathematics that have been proved equiconsistent with various large cardinal notions. Work of Solovay and Shelah shows that the impossibility of removing the axiom of choice from Vitali's construction of a nonmeasurable set is equiconsistent with the existence of an inaccessible cardinal; dramatic work on determinacy establishes equiconsistency connections with Woodin cardinals; and so on in many other cases. From this point of view, it doesn't much matter that the large cardinal notions themselves are often linearly ordered in a trivial manner, if all these other statements from mathematics are proved to line up with that hierarchy---it would still establish a genuine linearity phenomenon.

But does this counterargument overplay its hand? We don't actually have \emph{so} many instances of equiconsistency between large cardinals and mathematical principles arising outside logic and set theory. There are some very prominent cases, but are these truly sufficient to make the case for a genuine widespread linearity phenomenon? I don't believe it is enough.

A second observation undermining the linearity evidence is the simple fact that in a number of irritating instances, we don't yet actually know the natural hierarchy to be linear. For example, we don't yet know well exactly how the strongly compact cardinals fit into the hierarchy, even though this was one of the earliest large cardinal notions, arising from compactness properties of infinitary logic. The defenders of linearity often shrug at this case, brushing off concerns about it, but I find that strange and frankly unsatisfying, since this is a core large cardinal notion that directly undermines or at least challenges the claim of a sweeping linearity phenomenon for large cardinals. For example, we just don't know how to compare the consistency strength of one supercompact cardinal with two strongly compact cardinals; we don't know the strength of an indestructible weakly compact cardinal; we don't know exactly how the proper forcing axiom fits into the hierarchy; and similarly in many other cases. As far as we know, there could be abundant nonlinearity in consistency strengths surrounding each of these and many other commonly studied axioms.

Let me next explain a subtler argument suggesting that we may be mistaken about linearity, namely, what I call the \emph{confirmation-bias} argument. When engaging with the independence phenomenon in set theory and establishing relative consistency strengths, we generally begin with a model of one theory $T$ and construct from it a model of another theory $S$. From any model of \ZF, for example, we construct models of $\ZFC+\CH$ or $\ZFC+\neg\CH$ or Martin's axiom or what have you; from a model of \ZFC\ with sufficient large cardinals, we construct models of $\ZF+\DC$ with various determinacy axioms. When we begin with a model of $T$ and construct a model of $S$, then we will have established $S\leq T$ in consistency strength, and when there is also a converse construction, then we will also know $T\leq S$ and consequently that the theories are equiconsistent, $S\equiv T$. When the model of $S$ that we construct is a set model inside the model of $T$, then by the sufficiency condition I mentioned in section \ref{Section.Formal-nonlinearity}, we deduce the strict relation $S<T$.

In nearly all these arguments, the tools we use are forcing and transitive inner models (including set-sized models), often in sophisticated elaborate combination. Set theorists have become expert in using these tools to explore the vast range of set-theoretic possibility.

The confirmation-bias observation is that with this process and these tools by themselves, we shall never establish an instance of incomparability in consistency strength. The reason is that both methods preserve arithmetic truth---we cannot change the arithmetic of a model by forcing or by going to a transitive set or transitive class inner model. But in order to establish nonlinearity in consistency strength, it is necessary that we change the arithmetic truths of the models. To show that theories $S$ and $T$ are incomparable, after all, we need to provide a model of \ZFC\ with $\Con(S)$ but not $\Con(T)$ and another with $\Con(T)$ but not $\Con(S)$, and this will require that these models have not only different natural number structures $\N$, but end-extension incomparable such natural number structures---and so they will also be interpretatively incomparable, for in any interpretation the inconsistency of the other model would still be present and we wouldn't achieve the incomparability situation.

The final conclusion of the confirmation-bias argument is that we shouldn't be surprised to observe only linearity, if our tools are incapable of observing nonlinearity. This is precisely what it means to suffer a confirmation-bias error.

\section{Is self-reference disqualifying for naturality?}

Many people object to the naturality of the arithmetic sentences mentioned in section \ref{Section.Formal-nonlinearity} on the grounds that these sentences are self-referential. Perhaps it is thought that self-reference is a strange and unexpected feature in mathematics, and therefore it may perhaps be disqualifying for a self-referential assertion to be considered ``natural.'' I should like to push back against this view.

I find diagonalization and self-reference to be at the very core of mathematical logic and set theory. Set theory as a subject essentially begins with Cantor's diagonal argument, establishing uncountability in an instance of diagonal self-application. Russell's refutation of the general comprehension axiom is explicitly self-referential, building the class of all non-self-membered sets. We don't take general comprehension to be fine otherwise, for ``natural'' assertions. These arguments are surely amongst the founding central ideas of the subject, and the diagonalization idea is woven deeply throughout it. Furthermore, these diagonalizations are fundamentally the same as used to prove the fixed-point lemmas that lead to the \Godel\ and Rosser sentences. What can be the coherent philosophy of ``natural'' that counts the constructions of Cantor and Russell as natural, but not the fundamentally similar construction of the \Godel\ and Rosser sentences?

To my way thinking, the large cardinal axioms themselves engage in a kind of self-reference. To assert that there is an elementary embedding $j:V\to M$ of a certain kind is to assert that there is transformation of objects from our current world $V$ to a new world $M$ associating to every object $x$ in our world a duplicate $j(x)$ in the new world, with all the same properties in that world as $x$ had in the old world. So the axiom at bottom posits a system of duplicates $j(x)$, whose properties are stated by (self-)reference back to $x$. In this light, nearly every large cardinal axiom partakes of self-reference.

Or consider the proper forcing axiom \PFA, a strong generalization of Martin's axiom \MA\ from the case of c.c.c. forcing to proper forcing. This axiom is commonly included amongst the ``natural'' set-theoretic hypotheses, with most bets placing it as equiconsistent with (or very near to) the existence of a supercompact cardinal in consistency strength. The axiom makes the Martin's axiom claim about the existence of a filter meeting any given $\aleph_1$ many dense sets for a forcing notion $\P$, provided that it is proper. But to my way of thinking, the property of a forcing notion $\P$ being proper is at least as self-referential as the Rosser sentence or the other simple arithmetic sentences discussed earlier in this article (and an order of magnitude or two more difficult). Namely, $\P$ is proper essentially when $\P$ densely often contains conditions that are themselves generic or ensure genericity for the forcing with the version of $\P$ itself over suitable countable elementary substructures of sufficient fragments of the universe; put simply, $\P$ is proper when it consists of conditions that are generic for the forcing $\P$ itself, in countable simulacra over various countable universes. Is this any less self-referential than a sentence making an assertion about its own \Godel\ code? Why should the comparatively simple Rosser sentence be somehow beyond the pale, while properness counts as natural?

Diagonalization and self-reference are pervasive in logic and set theory, and to regard those features as automatically unnatural would have us declaring the entire subject unnatural.

\section{Naturality}

What does it mean to have a ``natural'' example in mathematics? Many mathematicians seem to adopt a know-it-when-you-see-it attitude to naturality, without giving a formal account. Does it matter that what counts as natural sometimes changes over time?

There is the connotation that natural examples are those occurring in practice---Koellner \cite{Koellner2011:SEP-independence-large-cardinals} refers to examples that ``arise in nature'' and Steel \cite{Steel2014:Godels-program} says that natural set-theoretic hypotheses are those ``considered by set theorists, because they had some set-theoretic idea behind them.''

It would be the naturalist fallacy, of course, to imbue that conception of naturality with all the positive connotations one usually finds for this word in mathematics. Is every call for natural examples in mathematics commiting the naturalist fallacy? Is the subject of mathematics rife with the naturalist fallacy?

I don't believe so, because despite those remarks of Koellner and Steel, naturality as it is used in mathematics is usually not a reference to actual practice. After all, one commonly hears naturality-based objections to examples offered in actual practice; mathematicians often criticize even a well-established argument or example, requesting a more natural one. Consider the subject of computability theory, for example, which has thousands of constructions using complex priority arguments to establish certain fundamental features in the hierarchy of Turing degrees. These are computability-theoretic constructions, introduced by computability theorists for computability-theoretic purposes, but the resulting degrees are nevertheless sometimes criticized as unnatural. One hears requests for ``natural'' Turing degrees with certain features---for example, is there a natural solution of Post's problem?

(Another instance, which I hesitate to mention: I am a set theorist who has in this very paper introduced theories and axioms for a set-theoretic purpose, defending them as natural and set-theoretic, while proving that they constitute instances of nonlinearity and illfoundedness. But are we to take this by itself as refuting Steel's conjecture, mentioned on page \pageref{Steel-conjecture}, I doubt that I shall succeed so easily.)

A different conception of the arising-in-practice connotation would be that natural notions are those that might easily arise in an unrelated subject or practice. On this view, examples in logic would count as natural if they might arise in graph theory, algebra or topology. An actual graph-theoretic decision problem would be seen as natural in computability theory. But this view, taken strictly, would seem to rule out many of the large cardinal hypotheses that Steel and Koellner want to see as natural, since other subjects generally give very little consideration to large cardinals at all. We need a different notion of the ``natural.''

There is a connotation that natural examples must be examples that have or at least could arise independently of whatever immediate application is currently being made of them. This would explain why many computability theoretic constructions are often seen as unnatural, as those constructions are often aimed solely at providing instances of the specific matter at hand. Furthermore, this property would also be true whenever an example was not overly detailed or technical, since one can easily imagine that simple examples could arise independently of any particular motivation. Certainly in many instances, the natural examples seem to be those that are easily described or presented.

For such a view, however, one might recall Shelah's response to criticisms of his arguments as being overly ``technical'':
\begin{quote}
The term technical is a red flag for me, as it is many times used not for the routine business of implementing
ideas but for the parts, ideas and all, which are just hard to understand and many
times contain the main novelties. \cite[Axis~A]{Shelah1993[Sh:E16]:The-future-of-set-theory}
\end{quote}
One might respond similarly to claims of unnaturality. Would we want to hold that all difficult examples are unnatural? Earlier I had mentioned the proper forcing axiom \PFA, which is surely difficult yet often taken by set theorists as natural. And many large cardinal notions, including Woodin cardinals, remarkable cardinals, and many others, are commonly regarded as difficult.

Let me mention that the actual-practice conception of naturality, regrettably, can too easily be construed narrowly in a way that harms mathematical advance. Namely, in my experience concerns about naturality are sometimes raised in effect simply to reject unfamiliar ideas or constructions. In a few instances on MathOverflow, a mathematician not from logic asked a mathematical question that I was able to answer easily using ideas from set theory and logic; perhaps my argument used transfinite recursion or some other standard method, completely natural to my way of thinking. But the solution was rejected as unnatural---the person was just not knowledgable of the subject. That would have been fine, if they had been open to learning, I would have explained more, but to reject the approach as ``unnatural'' and be done with it is too much---I find the criticism empty. What a pity it would be for our conception of the natural to lead us all into scattered knowledge silos.

A similar objectionable use of ``natural'' arises when it is simply a cover for sloppiness in one's mathematical conceptions. Perhaps someone asks, perhaps again on MathOverflow, whether every instance of $X$ has feature $Y$, and when a counterexample is produced, the objection is made that it is ``unnatural.'' But of course, naturality might have little to do with it---often the question had simply been ill formulated; the person had asked about $X$ but greater reflection might have lead them to formulate additional unstated properties $X'$ or $X''$. In light of such experiences, I tend toward suspicion of any free-and-easy use of ``natural'' in mathematics---to say that a theory or example is ``natural'' or ``unnatural'' is too often simply empty, or at best a lazy stand-in for other unstated properties that the speaker has not yet formulated clearly.

%

Carolin Antos-Kuby suggested at my talk on this paper in her seminar that set theorists of the universist philosophy, the view that there is a unique intended universe of set theory, might find it appealing to say that the ``natural'' statements and theories are simply those that are true in the one true universe. This could explain why theories such as $\ZFC+\neg\Con(\ZFC)$ and $\ZFC+$``there is no Mahlo cardinal'' are found unnatural by the large cardinal universists. On this view every statement or its negation will be natural---and the most natural set theories will ultimately converge to the one true set theory. My reply to this suggestion is that it will not achieve the linearity claim for the hierarchy of consistency strength. Indeed, it will contradict linearity, since all my main examples of incomparability and ill-foundedness, the cautious theories and so on, are all true set theories on the usual large cardinal universist view. The cautious theories are subtle weakenings of the large cardinal theories, but weakened in a naturally cautious manner that causes them to have incomparable consistency strength. So they would count as ``natural'' on this proposal, yet still exhibit nonlinearity.

Consider a related naturality objection that one might imagine against the incomparability sentences of sections \ref{Section.Natural-instances}, \ref{Section.Natural-instances-arithmetic}, and \ref{Section.Cautious-theories}. Namely, the differences between the sentences on offer are revealed only in $\omega$-nonstandard models, which is an unnatural case. The differing natures of the computable function $f$ used in theorem \ref{Theorem.Incomparability-f(n)-inacc}, for example, can be exhibited only in $\omega$-nonstandard models; in standard models, the sentences will always agree.

My rebuttal to this objection is that consistency strength is \emph{inherently} about $\omega$-nonstandard models. If we only ever considered $\omega$-standard models, they would all always agree about the consistency of any given theory whatsoever, and there would be no hierarchy to analyze. For one theory to be weaker than another in consistency strength, $S<T$, means exactly that there is a model of $\Con(S)+\neg\Con(T)$, and this model must be $\omega$-nonstandard if $T$ is consistent. Therefore any substantive treatment of the consistency strength hierarchy must ultimately be concerned with nonstandard models.

Let me next raise the distinction between a natural \emph{kind} of problem and a natural problem instance. The polygonal tiling problem, for example, is surely a natural kind of problem, but does this mean that every individual tiling problem is natural? If not, this might speak against the success of my examples in corollary \ref{Corollary.Tiling-problems} as natural instances of nonlinearity. That is, tiling problems are a natural class of problem, and I found specific tiling problems with incomparable consistency strength, but perhaps those specific problems do not count as natural considered on their own. Similarly, perhaps the incomparability and ill-foundedness examples provided by theorems \ref{Theorem.Incomparability-f(n)-inacc}, \ref{Theorem.Incomparability-running-time}, \ref{Theorem.Incomparability-e-halts} and so on are merely instances of natural kinds of problems, rather than actually natural instances themselves.

The respective undecidability of the tiling problem, the diophantine problem, the halting problem, the Entsheidungsproblem, and many others holds profound philosophical significance in identifying fundamental limitations on our ability to achieve mathematical knowledge mechanistically. The undecidability results show that we can have no uniform computable procedure to solve instances of these extremely natural kinds of problems. Yet, the undecidability results themselves are all proved ultimately by means of diagonal arguments that make use in each case of weird self-referential instances of those problems.  Turing's argument for the undecidability of the halting problem, for example, invokes a program that asks about its own behavior when applied to itself as input (which is something we basically never do in practice). And this weird example is in effect copied into all the others, when one proves undecidability through a reduction of the halting problem.

But do we say that the halting problem is decidable for ``natural'' instances? No, even though for the programs arising in practice, those written for a clear computational purpose, we can indeed generally determine whether it will halt or not. (Indeed, Miasnikov and I proved \cite{HamkinsMiasnikov2006:HaltingProblemDecidable} that there is a linear-time algorithm that correctly decides almost every instance of the halting problem, with respect to the asymptotic density measure---as the number of states $n$ increases, the proportion of all $n$-state programs handled by our algorithm approaches 100\%.) But in computability theory we do not generally say that the halting problem is essentially decidable; we do not highlight a ``natural'' halting-problem-decidability phenomenon, according to which we can decide halting in the natural cases that matter. Why do the set theorists claim that consistency strength is linear in the natural cases that matter?

Steel \cite[slide 26]{Steel2013:Godels-program-stanford-slides} advances the idea that the consistency strength order of natural theories aligns with containment of their arithmetic consequences. Specifically, the claim is that for natural theories for which $S<T$ in consistency strength, then any arithmetic consequence of $S$ will also be provable in $T$. I pointed out in \cite{MO236436.Hamkins:Does-consistency-strength-hierarchy-coincide-with-arithmetic-consequence?}, however, that Steel's principle will require us to rule out the naturality of many theories that we might otherwise have been inclined to accept as natural. Specifically, if $T$ is a natural large cardinal theory, then it seems to me that many set theorists would readily accept $\Con(T)$ as a natural assertion. Indeed, for many natural theories this is an assertion that has appeared as an explicit hypothesis in perhaps hundreds of published theorems in the set-theoretic research literature, and in this sense fulfills the actual-practice conception of naturality.

But to regard $\ZFC+\Con(T)$ as natural would in most cases contradict Steel's arithmetic containment principle. To see this, assume that $T$ proves the existence of an inaccessible cardinal $\kappa$. It follows that $T$ will prove all instances of the reflection scheme asserting, ``if arithmetic statement $\phi$ is provable in \ZFC, then it is true.'' The reason is that $T$ will prove that $\phi$ holds $V_\kappa$, and since $\phi$ is arithmetic, it will be absolute to the full set-theoretic universe $V$. Meanwhile, the consistency-wise stronger theory $\ZFC+\Con(T)$ cannot prove all instances of this scheme, because this theory is consistent with the assertion $\neg\Con(\ZFC+\Con(T))$, and any model of this combined theory will think that $\neg\Con(T)$ is provable in $\ZFC$ but not true. Therefore, we have a natural theory $T$ that is strictly weaker in consistency strength than another theory $\ZFC+\Con(T)$, yet the weaker theory proves some arithmetic statements that are not provable in the stronger theory. This would contradict the arithmetic containment principle, unless we regard $\ZFC+\Con(T)$ as unnatural. Therefore, if one is committed to the idea that consistency strength increases align with containment of arithmetic consequences for natural theories, then we cannot allow $\ZFC+\Con(T)$ as a natural theory even when $T$ is.

Steel will reply, of course, that indeed $\ZFC+\Con(T)$ is not a natural theory---this theory instantiates exactly what he describes as the \emph{instrumentalist dodge} in set theory, described in \cite[p.~423]{FefermanFriedmanMaddySteel2000:Does-mathematics-need-new-axioms}. Namely, we don't want to assume merely that the large cardinals are consistent, but rather that they are actually true. What I have just argued here is that the arithmetic-containment principle for consistency strength requires this stance. In particular, according to this view the theory $\ZFC+\Con(\ZFC+\exists\text{ inaccessible})$ is not natural.

\section{Analogy between set theory and computability theory}

Let me draw an analogy between set theory with its study of the hierarchy of consistency strength and computability theory with its study of the Turing degrees, a rich hierarchy of complexity that is surely as deep and complicated as the hierarchy of consistency strength, and also as philosophically significant---one can view the Turing degrees as the possible countable amounts of information. Russell Miller has described what he calls the ``build it and they will come'' philosophy in computability theory, according to which if one wants to exhibit a certain feature in the hierarchy of Turing degrees, then you simply have to get down to business and make it happen with a particular construction built for the purpose. Computability theorists seem quite commonly to embrace the chaotic scrappiness of the hierarchy of Turing degrees. I wonder whether such an attitude towards the hierarchy of consistency strength in set theory would lead us to discover fascinating new phenomenon in the degrees of consistency strength.

Meanwhile, computability theorists also point to their own natural linearity phenomenon, namely, the ``naturally arising'' Turing degrees invariably arise in a linear, well-ordered part of the hierarchy of Turing degrees. The commonly arising definable sets of natural numbers, the ones we might be independently interested in, tend to have their Turing degrees landing precisely on one of the low-level iterated jumps:
   $$0<0'<0''<0'''<\cdots<0^{(\omega)}<\cdots$$
In this sense the ``natural'' Turing degrees are well-ordered, and researchers seek a deeper explanation. Joseph Miller points to the research efforts around the Sacks question, asking for a degree-invariant solution of the generalized Post's problem, and the Martin conjecture, seeking to establish the Turing jump and its iterates as canonical for definable degree-invariant Borel actions, as part of the program to provide a deeper explanation of the linearity phenomenon in the Turing degrees. Antonio Montalb\'an \cite{Montalban2019:Martin's-conjecture-a-classification-of-the-naturally-occuring-Turing-degrees} explains the importance of Martin's conjecture like this:
\begin{quote}
The [linear] hierarchy we were looking for seems to exist, but $\mathcal{D}$ [the Turing degrees] seems too chaotic to help us find it. The contrast between the general behavior in $\mathcal{D}$ and the behavior of the naturally occurring objects is so stark that there must be a deep reason behind it. We need to dig deeper.
\end{quote}

Those conjectures in effect seek to replace naturality talk by identifying exactly the properties that are sought: degree-invariance and restrictions to Borel actions in place of arbitrary actions. One needn't refer any longer to the ``natural'' Turing degrees to engage with them, but can rather use these more specific ideas. The computability theorists have thus filled in what ``natural'' means here.

\section{A challenge for defenders of natural linearity}

Let me close this article on a positive note with a challenge to the defenders of the natural linearity phenomenon. In light of the abundant counterexamples establishing pervasive nonlinearity and ill-foundedness in the hierarchy, I propose that we should abandon the empty naturality talk and instead get down to the work of identifying the attractive features we had sought in our notion of the natural. Can we give legs to a reified naturality notion that is sufficient to establish linearity in the consistency-strength hierarchy? For example, what is the set-theoretic analogue of the Martin conjecture for consistency strength? Might we ultimately hope to identify a broad class of assertions with welcome, attractive features---standing in for the so-called ``natural'' assertions---which provably align into a well ordered hierarchy of consistency strength? That would be how to do it.

Here is one small step in this direction, which I provided at \cite{Hamkins.MO384050:Uniform-incomparable-consistency-strengths} in response to a question of Dmytro Taranovsky, who told me he was inspired to ask it because of this article.

\begin{theorem}
 There is no monotone analogue of the independent Rosser sentence construction. That is, there is no assignment $\tau\mapsto \rho_\tau$ of sentences $\tau$ in the language of arithmetic to sentences $\rho_\tau$ with the properties:
 \begin{enumerate}
   \item (Independence) If $\PA+\tau$ is consistent, then so are $\PA+\tau+\rho_\tau$ and $\PA+\tau+\neg\rho_\tau$.
   \item (Extensionality) If $\PA\proves\tau\iff\sigma$, then $\PA\proves\rho_\tau\iff\rho_\sigma$.
   \item (Monotonicity) If $\PA\proves\tau\to\sigma$, then $\PA\proves\rho_\tau\to\rho_\sigma$.
 \end{enumerate}
\end{theorem}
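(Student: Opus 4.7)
The plan is to derive a contradiction from the three conditions by exploiting the interaction between monotonicity and the trivial implication $\tau \to \top$. The key observation will be that monotonicity forces the sentence $\rho_\top$ (for a tautology $\top$) to be a universal lower bound: for every $\tau$, since $\PA \proves \tau \to \top$, monotonicity yields $\PA \proves \rho_\tau \to \rho_\top$. This rigid bottom element will conflict with independence when applied to the sentence $\tau = \neg\rho_\top$.

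Concretely, I would proceed as follows. First, apply independence to the tautology $\top$ (noting $\PA + \top = \PA$ is consistent): this yields that both $\PA + \rho_\top$ and $\PA + \neg\rho_\top$ are consistent. In particular, fix $\tau = \neg\rho_\top$, which is consistent with $\PA$. Second, apply independence to this specific $\tau$: it follows that $\PA + \neg\rho_\top + \rho_{\neg\rho_\top}$ is consistent. Third, invoke monotonicity on the trivial implication $\PA \proves \neg\rho_\top \to \top$ to conclude $\PA \proves \rho_{\neg\rho_\top} \to \rho_\top$. Combining, $\PA + \neg\rho_\top + \rho_{\neg\rho_\top}$ proves both $\rho_\top$ and $\neg\rho_\top$, contradicting the previously established consistency. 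Hence no such assignment can exist.

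The argument essentially requires no technical obstacle once the right perspective is taken: the real content is recognizing that monotonicity, applied to vacuous implications, forces a global alignment of the sentences $\rho_\tau$ by a universal weakest element $\rho_\top$, while independence at the particular instance $\tau = \neg\rho_\top$ demands disalignment. The main conceptual point to isolate is that the independent Rosser construction of Theorem~\ref{Theorem.eta-neg-eta-incomparable} is inherently non-extensional and non-monotone in the theory parameter, and any attempt to make it functorial in direct implication immediately collapses. One subtlety worth checking is that extensionality is not actually needed for this particular contradiction; it comes for free from monotonicity (applied in both directions to a provable biconditional). The argument could therefore be advertised as a stronger statement: independence together with monotonicity alone is already inconsistent.
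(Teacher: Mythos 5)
Your argument is correct and is essentially identical to the paper's proof, which uses the tautology $1=1$ in place of $\top$ and derives the same contradiction from the consistency of $\PA+\neg\rho_{1=1}+\rho_{\neg\rho_{1=1}}$ together with monotonicity applied to $\neg\rho_{1=1}\to 1=1$. Your closing remark that extensionality is not needed matches the paper's own observation that the three properties are redundant, since monotonicity strengthens extensionality.
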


The extensionality principle (also known as uniformity) expresses that the sentence $\rho_\tau$ does not depend on intensional aspects of the manner in which $\tau$ is asserted or the proof system, but is well-defined up to provable equivalence. The three properties are redundant, of course, since monotonicity is a strengthening of extensionality.

\begin{proof}
Since $\PA$ is consistent, it follows that $1=1$ is consistent with \PA, and so $\rho_{1=1}$ is independent of \PA. In particular, $\neg\rho_{1=1}$ is consistent with \PA, and so $\PA+\neg\rho_{1=1}+\rho_{\neg\rho_{1=1}}$ is consistent. But \PA\ trivially proves that $\neg\rho_{1=1}\to 1=1$, and so by monotonicity it follows that \PA\ proves $\rho_{\neg\rho_{1=1}}\to\rho_{1=1}$, which contradicts the earlier stated consistency.
\end{proof}

In particular, the Rosser sentence itself does not obey monotonicity. Can one weaken monotonicity to mere extensionality? Shavrukov and Visser \cite{ShavrukovVisser2014:Uniform-density-in-Lindenbaum-algebras} provide a uniform computable construction producing a sentence $F(A,B)$ that is strictly between $A$ and $B$ in the Lindenbaum algebra over \PA, whenever $A$ is strictly below $B$ in that algebra, and which furthermore is extensional, in the sense that if $A$ and $B$ are replaced with equivalent $A'$ and $B'$, then $F(A,B)$ is equivalent to $F(A',B')$. They use this construction to provide extensional Rosser constructions. The resulting sentences, however, are not $\Pi^0_1$. This seems to leave the following question open:

\begin{question}
 Is there a $\Pi^0_1$ formula $\rho(x)$ with the following properties?
 \begin{enumerate}
   \item (Independence) If $\PA+\tau$ is consistent, then so are $\PA+\tau+\rho(\gcode{\tau})$ and $\PA+\tau+\neg\rho(\gcode{\tau})$.
   \item (Extensionality) If $\PA\proves\tau\iff\sigma$, then $\PA\proves\rho(\gcode{\tau})\iff\rho(\gcode{\sigma})$.
 \end{enumerate}
\end{question}

A positive answer would provide a degree of nonlinear naturality, thereby undermining the linear naturality hypothesis, whereas a negative answer would support the hypothesis, in showing that independence must be sensitive to intensionality.

My challenge is not just this one question, but the challenge of producing many more such questions, aimed at giving legs and mathematical substance to our conception of what would count as ``natural'' instances of independence and incomparability in the hierarchy of consistency strength.

Recent work by James Walsh and others are deeply engaged with proof-theoretic aspects of the natural linearity phenomenon, including proof-theoretic analogues of Martin’s Conjecture \cite{MontalbanWalsh2019:On-the-inevitability-of-the-consistency-operator, Walsh2020:A-note-on-the-consistency-operator, Walsh2022:Evitable-iterates-of-the-consistency-operator}, and other work on what they refer to as the well-ordering phenomenon for natural theories \cite{PakhomovWalsh2021:Reflection-ranks-and-ordinal-analysis, PakhomovWalsh2021:Reflection-ranks-via-infinitary-derivations, Walsh2022:A-robust-proof-theoretic-well-ordering}. In a sense, Walsh's project takes the well-ordering phenomenon as a given starting point, seeking then to answer the question: what is the meaning of ``natural'' to make it true that natural theories are well-ordered by consistency or interpretability strength? According to Walsh, ``the emerging picture is that natural theories are proof-theoretically equivalent to iterated reflection principles'' \cite{Walsh2021:On-the-hierarchy-of-natural-theories}.

Much of the analysis concerns the kind of uniformity and invariance requirements that appear in Martin's conjecture, but for theories and interpretation instead of Turing degrees and relative computability. While those requirements may help to establish the well-order phenomenon, ultimately I would find it disputable whether such requirements are actually part of the notion of ``natural'' as it is commonly used in large cardinal set theory. The typical case there has one-off theories, such as \ZFC+``there is a supercompact cardinal,'' which are regarded as natural, but without any intention to realize them as an instance of uniform procedure of extending arbitrarily given theories. Does the uniformity analysis provide an account of linearity for this kind of use of ``natural''? The main philosophical counterpoint here is that one cannot convincingly establish the natural linearity phenomenon, after all, by presuming that it is true and then (re)defining a notion of ``natural'' so as to give rise to it. But far be it from me to object that some notions of ``natural'' may be unnatural. On the contrary, my proposal is that we should view naturality talk as a stand-in for other more precise notions, which our favored natural theories exhibit.

\printbibliography


\end{document}